\date{23 December 2025}
\numberwithin{equation}{section}
\let\oldtocsection=\tocsection
\let\oldtocsubsection=\tocsubsection
\renewcommand{\tocsection}[2]{\hspace{0em}\oldtocsection{#1}{#2}}
\renewcommand{\tocsubsection}[2]{\hspace{1em}\oldtocsubsection{#1}{#2}}
\numberwithin{equation}{section}
\newtheorem{prop}{Proposition}[section]
\newtheorem{lem}[prop]{Lemma}
\newtheorem{cor}[prop]{Corollary}
\newtheorem{thm}[prop]{Theorem}
\theoremstyle{remark}
\newtheorem{rem}[prop]{Remark}
\newtheorem{ex}[prop]{Example}
\newcommand{\ot}{\otimes}
\newcommand{\otb}{\otimes_B}
\newcommand{\beq}{\begin{equation}}
\newcommand{\eeq}{\end{equation}}
\newcommand{\nn}{\nonumber}
\newcommand{\N}{\mathbb{N}}
\newcommand{\Z}{\mathbb{Z}}
\newcommand{\inner}[2]{\left<#1,#2\right>}
\renewcommand{\tt}{\mathsf{t}}
\renewcommand{\ss}{\mathsf{s}}
\newcommand{\id}{\mathrm{id}}
\newcommand{\kk}{\textbf{k}}
\newcommand{\inc}{u}
\newcommand{\qbin}[2]{{\genfrac{[}{]}{0pt}{}{#1}{#2}}}
\newcommand{\ta}{\psi}
\newcommand{\tap}{\overline{\ta}}
\newcommand{\tain}{\psi^{-1}}
\newcommand{\dta}{\cdot^\ta}
\newcommand{\dtain}{\cdot^{\tain}}
\newcommand{\taa}[1]{{#1}^{[\ta]}}
\newcommand{\taain}[1]{{#1}^{[\tain]}}
\newcommand{\taadue}[1]{{#1}^{[\ta]_{\scriptscriptstyle{2}}}}
\newcommand{\taatre}[1]{{#1}^{[\ta]_{\scriptscriptstyle{3}}}}
\newcommand{\taaqu}[1]{{#1}^{[\ta]_{\scriptscriptstyle{4}}}}
\newcommand{\taacq}[1]{{#1}^{[\ta]_{\scriptscriptstyle{5}}}}
\newcommand{\hg}{H_\cot}
\renewcommand{\cot}{\gamma}
\newcommand{\co}[2]{\cot\left({#1}\ot{#2}\right)}
\newcommand{\coin}[2]{\bar\cot\left({#1}\ot{#2}\right)}
\newcommand{\mt}{\cdot_\cot}
\newcommand{\mtco}{\bullet_\cot}
\newcommand{\coL }{\mathcal{L}}
\newcommand{\coM }{\mathcal{M}}
\newcommand{\pf}{C \ot^\ta_B A}
\newcommand{\tb}{\widetilde{B}}
\newcommand{\ff}{\mathsf{F}}
\newcommand{\M}{\mathrm{M}}
\newcommand{\zero}[1]{{#1}_{\scriptscriptstyle{(0)}}}
\newcommand{\one}[1]{{#1}_{\scriptscriptstyle{(1)}}}
\newcommand{\two}[1]{{#1}_{\scriptscriptstyle{(2)}}}
\newcommand{\three}[1]{{#1}_{\scriptscriptstyle{(3)}}}
\newcommand{\four}[1]{{#1}_{\scriptscriptstyle{(4)}}}
\newcommand{\tone}[1]{{#1}^{\scriptscriptstyle{<1>}}}
\newcommand{\ttwo}[1]{{#1}^{\scriptscriptstyle{<2>}}}
\newcommand{\lone}[1]{{#1}^{\scriptscriptstyle{<1>}}}
\newcommand{\ltwo}[1]{{#1}^{\scriptscriptstyle{<2>}}}
\newcommand{\coone}[1]{{#1}_{\scriptscriptstyle{\underline{(1)}}}}
\newcommand{\cotwo}[1]{{#1}_{\scriptscriptstyle{\underline{(2)}}}}
\title{Push-forward of Hopf--Galois extensions: \\the non central case}
\author{Giovanni Landi, Chiara Pagani}
\address[G.~Landi]
{Universit\`a di Trieste and INFN--Trieste, Trieste, Italy}
\email{landi@units.it}
\address[C.~Pagani]
{Universit\`a di Napoli Federico II and INFN--Napoli, Napoli, Italy}
\email{chiara.pagani@unina.it}
\begin{document}

\maketitle

\begin{abstract}
We study the push-forward of Hopf--Galois extensions as the algebraic counterpart of the pullback of principal bundles. 
We apply the theory of twisted tensor product algebras to endow covariant extensions of modules along a map $\mathsf{F}$ with an algebra structure, under compatibility conditions between $\mathsf{F}$ and the twisting map. 
The push-forward of an $H$-Galois extension $B \subset A$ along a map $\mathsf{F} : B \to C$ is an $H$-Galois extension of $C$. The corresponding Ehresmann--Schauenburg algebroids are compared. 
\end{abstract}

\tableofcontents
\parskip = .75ex

\section{Introduction}
From an algebraic viewpoint, particularly within algebraic and noncommutative geometry, principal bundles are modeled by faithfully flat Hopf--Galois extensions \cite{Sch90}. In this context, the theory of pull-back or induced bundles has not yet reached a satisfactory formulation. Given an $H$-Galois algebra extension $B \subset A$ and an algebra map $\ff:B \to C$,  
the covariant $\ff$-extension $C \otb A$ of  the $B$-module $A$  along $\ff$ (see \eqref{push}) can be viewed, as a linear space, as the analogue of the fibered product of spaces along a map -- that is, the total space of a pull-back bundle.  The usual multiplication
$m_{C \ot A} = (m_C \ot m_A) \circ (\id_C \ot \textup{flip} \ot \id_A)$ of the tensor product algebra  $C \ot A$ descends to a well-defined multiplication on the 
covariant $\ff$-extension $C \ot_B A$   under the assumption that the algebra $B$ is central in $A$, and hence in particular commutative, and that the image $\ff(B)$ of $\ff$  is central in $C$, \cite{kassel}.  The problem of endowing $C \ot_B A$ with an algebra structure outside the setting of commutative algebra extensions remains open.
In this work we address this problem by employing the theory of twisted tensor product algebras.
   
Twisted tensor product algebras, in the setting here considered, were first studied in \cite{csv} (where differential calculi on them were also investigated \cite[\S 3]{csv}) and later dualized to the context of coalgebras and bialgebras in \cite{cae}. Given two associative algebras $A$ and $C$, the vector space $C \ot A$ is endowed with the multiplication $m^\ta:=(m_C \ot m_A) \circ (\id_C \ot \ta \ot \id_A)$, where  $\ta : A \ot C \to C \ot A$ is any linear map satisfying the minimal conditions required to ensure the associativity of the twisted multiplication $m^\ta$ (see \S  \ref{sec:twist}).
The idea of creating a new object by deforming the tensor product of two existing ones predates the aforementioned work and has been explored in various contexts. Notable examples in Hopf algebra theory are  Takeuchi's smash product  and crossed products (see e.g. \cite{mon}), Drinfel'd quantum doubles and Majid's  bicrossproducts of Hopf algebras  (see e.g. \cite{majid}). Twisted tensor products of algebras, together with their counterparts for coalgebras and Hopf algebras, admit a natural interpretation within Beck's theory of distributive laws (cf. \eqref{C1} and \eqref{C2}), which offers the categorical setting for these constructions.

Here we apply the theory of twisted tensor product of algebras to the study of push-forward of possibly non central Hopf--Galois extensions. 
Given two unital associative algebras $C$ and $A$ and an algebra map $\ff : B \to C$, for $B$ a subalgebra of $A$, 
we endow $C \ot A$ with a twisted  algebra multiplication $m^\ta$. We study  compatibility conditions between the twisting map  $\ta$ and the map $\ff : B \to C$ which ensure that the algebra structure of $C \ot^\ta \!A =(C \ot A, m^\ta)$ descends to the quotient $C \ot_B A$ (Proposition \ref{prop:Blin}). The resulting algebra is referred to as the push-forward algebra. Under assumptions on $\ta$, it inherits an $H$-comodule algebra structure from the one on $A$ (Proposition \ref{prop:Hcas}).  As a generalisation of \cite[Thm.~9]{Ru98} and \cite[Thm.1.4]{kassel}   to possibly non central $H$-Galois extensions, we prove  the following result:
 
\noindent
\textbf{Theorem \ref{thm:pushHG}.}
\textit{Let $B \subset A$ be an $H$-Galois extension with $A$ faithfully flat as a $B$-module. Let  
$\ta:A\ot C\to C\ot A$ be a twisting map with properties as in Lemma \ref{lem:CAB}. Then 
the twisted push-forward algebra $\pf$ of $B \subset A$ along the map $\ff:B \to C$ is a faithfully flat $H$-Galois extension of $C$.
}

The structure of the paper is as follows. Section \ref{sec:twisted-tp} reviews some preliminary results on twisted tensor products of algebras  and presents some basic facts  on the invertibility of twisting maps and their compatibility with $*$-structures. The main section, Section   \ref{sec:twistedHG},   applies the  theory  of twisted tensor algebras to the study of an algebra structure for covariant $\ff$-extensions of a $B$-module $A$ (in \S\ref{sec:pft}). The construction is specialized to the case where $B \subset A$ is an Hopf--Galois extension in \S \ref{sec:pf-HG}.  
Section \ref{sec:cases} presents some particular cases of push-forward of Hopf--Galois extensions $B \subset A$: the case of Galois objects (\S \ref{sec:Gobjects}) and those of push-forwards along the (injective) algebra inclusion $i: B \to A$ (\S \ref{sec:pf-tot}) and along a (surjective) projection map $\pi: B \to B/I$ to a quotient algebra (\S \ref{sec:quotient}). 
In Section \ref{sec:ES} we consider the Ehresmann--Schauenburg bialgebroid of a Hopf--Galois extension obtained via the push-forward construction and compare it with that of the original 
Hopf--Galois extension.
Finally Appendix \ref{appA} reviews some aspects of extensions of modules and Appendix \ref{app:B} shows that a property of a strong connection is preserved under $2$-cocycle deformation.

\textit{Notation.}  We work over a field $\kk$.  
Given an algebra $A$, we denote by $m_A$ and $\eta_A$ the multiplication and the unit map respectively, and use the  notation $1_A=\eta_A(1_\kk)$ for the unit element of $A$. 
For a Hopf algebra $H$, we denote $\Delta$, $\varepsilon$ and $S$ its coproduct, counit and antipode respectively. We use the standard Sweedler's notation for the coproduct and for the coaction of an Hopf algebra.

\section{Twisted tensor products}\label{sec:twisted-tp}

In this introductory section, we recall in \S\ref{sec:twist}  some basic facts about twisted tensor products of algebras (see e.g. \cite{csv,cae}) and prove in \S \ref{sec:inv*} some  results on the invertibility of twisting maps and their compatibility with $*$-structures.

 \subsection{Twisted tensor products of algebras}\label{sec:twist}
 
Tensor product algebras can be generalised to twisted tensor products by replacing the flip map by a more general suitable map.

Let $A$ and $C$ be two unital and associative algebras and $\ta:A\ot C\to C\ot A$ a linear map. The map 
\beq \label{opsi}
m^\ta:=(m_C\ot m_A)(\id_C \ot \ta \ot \id_A):(C \ot A)\ot ( C \ot A)\to C \ot A 
\eeq
defines an associative product on the vector space $C \ot A$ if and only if the map $\ta$ satisfies
\begin{multline}
\label{Oeq}
(\id_C \ot m_A)\circ(\ta\ot \id_A)\circ(\id_A\ot m_C\ot \id_A)\circ(\id_A\ot \id_C \ot \ta) = \\ (m_C \ot \id_A )\circ(\id_C \ot \ta)\circ(\id_C \ot m_A \ot \id_C)\circ(\ta \ot \id_A \ot \id_C) \, .
\end{multline}

We refer to a  linear map
$\ta:A \ot C \to C\ot A$
satisfying condition \eqref{Oeq} as a  \textit{twisting map}, and write  
 $\ta(a \ot c):= \taa{c} \ot \taa{a}$ for the image of $\ta$ (with possible sum understood).
Recursively, we use the notation $[\ta]_2, \dots, $ for the image of $\ta$ when applied more than once.
Condition \eqref{Oeq} then reads
\beq\label{Oeq-el}
\taadue{(c \taa{c'})} \ot \taadue{a} \taa{a'}= \taa{c} \taadue{c'} \ot \taadue{(\taa{a}a')} \, , \quad
\forall \, a,a' \in A, \; c,c' \in C \, .
\eeq

We denote $C \ot^\ta \!A$ the vector space $C\ot A$ equipped with the twisted product $m^\ta$ and refer to it as the \textit{twisted tensor product algebra}, $C \ot^\ta \!A= (C \ot A, m_{\ta})$. Also, we use the symbol $\dta$ to indicate the product of two elements: 
\beq\label{mult-psi}
m^\ta \Big( (c \ot a) \ot (c' \ot a') \Big) = (c \ot a) \dta (c' \ot a')  = 
c \taa{c'} \ot \taa{a} a'.
\eeq
The terminology "twisted tensor product" is taken from \cite{VD-VK}, whereas \cite{cae} uses "smash product", being $m^\ta$ a generalization of Takeuchi's smash product.

A sufficient condition for the associativity  \eqref{Oeq} to hold is that the following two conditions (distributive laws)  are satisfied:
\begin{align}
\label{C1}
\ta (m_A \ot \id_C )&=(\id_C \ot m_A)\circ (\ta \ot \id_A)\circ (\id_A \ot \ta) 
\\
\label{C2}
\ta (\id_A \ot m_C )&= (m_C \ot \id_A)\circ (\id_C \ot \ta)\circ (\ta \ot \id_C) \, .
\end{align}
They read, respectively,
 \begin{align}
 \label{C1el}
\ta(aa' \ot c) &=  \ta \left( a \ot {\taa{c}} \right)  \taa{a'} =  \taadue{\taa{c}} \ot \taadue{a}\taa{a'} 
\\
\label{C2el}
\ta(a \ot cc') &= \taa{c}  \ta(\taa{a} \ot c')=\taa{c}\taadue{c'}  \ot \taadue{\taa{a}} 
\end{align}
 on elements $a,a' \in A$, $c,c' \in C$. 

Given a twisting map $\ta:A \ot C \to C\ot A$,   
the tensor unit $\eta_\ot:=\eta_C \ot \eta_A$ is compatible with the twisted product $m^\ta$, that is $m^\ta \circ (\id \ot \eta_\ot) = \id =m^\ta \circ(\eta_\ot \ot \id)$  if and only if $\ta$ is normal, that is
for all $c \in C$, $a \in A$,
\begin{equation}\label{nor}
\ta  (1_A \ot c)= c \ot 1_A \quad \mbox{(right normal)}, \qquad  \ta (a \ot 1_C)=1_C \ot a\quad \mbox{(left normal)} .
\end{equation}

The associativity of $m_{\ta}$ and the property of $\eta_{ \ot }$ to be a compatible unit are independent: there are examples of associative algebras $C \ot^\ta \!A$ which do not admit a unit  (and,  clearly, not every  normal linear map $\ta:A \ot C \to C\ot A$  satisfies the associativity condition \eqref{Oeq}). 

The condition \eqref{Oeq} and left, respectively right, normality of $\ta$
 imply the  condition \eqref{C1}, respectively  \eqref{C2}. Thus, for a normal twisting map,   \eqref{C1}  and  \eqref{C2} are equivalent to the associativity of the product $m^\ta$.
 If a twisting map is right normal and satisfies the distributive law \eqref{C1} then it also satisfies
 the distributive law \eqref{C2}. Similarly a left normal twisting map that  satisfies \eqref{C2} also satisfies \eqref{C1}.
 
For finitely generated algebras $A$ and $C$, equations \eqref{C1} and \eqref{C2} imply that the linear map $\ta$ is  determined by its value on the algebra generators of $A$ and $C$ since there is a unique way to extend it to the whole of $A \ot C$ once is (well-)defined on the algebra generators.

Twisted tensor product of algebras are closely connected to the factorization problem for an algebra (see \cite[Ch.~7]{majid}, \cite[Thm.~2.10]{cae}). 

\begin{thm}\label{thm-cae-alg}

Let $C, A$ and $X$ be algebras. The following two conditions are equivalent:
\begin{enumerate}
\item
There exists an algebra isomorphism $X \simeq C \ot^\ta \!A$ for some normal twisting map  $\ta$.
\item The algebra X factorizes through $C$ and $A$, that is  there are two algebra morphisms $u_C: C \to X$ and $u_A : A \to X $ such that the map $\xi:= m_X \circ (u_C \ot u_A): C\ot A \to X$ is an isomorphism of vector spaces.
\end{enumerate}
\end{thm}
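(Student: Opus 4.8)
The plan is to treat the two implications separately, with essentially all the content sitting in the passage $(2)\Rightarrow(1)$ from a factorization to a normal twisting map.

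For $(1)\Rightarrow(2)$, I would start from an algebra isomorphism $\phi:C\ot^\ta\!A\to X$ and set $u_C(c):=\phi(c\ot 1_A)$ and $u_A(a):=\phi(1_C\ot a)$. Using the explicit form \eqref{mult-psi} of $m^\ta$ together with right normality, one has $\ta(1_A\ot c')=c'\ot 1_A$, so $(c\ot 1_A)\dta(c'\ot 1_A)=cc'\ot 1_A$ and hence $u_C$ is an algebra map; left normality gives the analogous fact for $u_A$. The key observation is then that right normality also yields $\ta(1_A\ot 1_C)=1_C\ot 1_A$, whence $(c\ot 1_A)\dta(1_C\ot a)=c\ot a$, so that
\[
\xi(c\ot a)=u_C(c)\,u_A(a)=\phi\big((c\ot 1_A)\dta(1_C\ot a)\big)=\phi(c\ot a).
\]
Thus $\xi=\phi$ is already the required vector space isomorphism.

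For the converse $(2)\Rightarrow(1)$, which is the heart of the matter, I would define the candidate twisting map by transport of structure,
\[
\ta:=\xi^{-1}\circ m_X\circ(u_A\ot u_C):A\ot C\to C\ot A,
\]
so that by construction $u_A(a)\,u_C(c)=\xi\big(\ta(a\ot c)\big)$ for all $a\in A$, $c\in C$. Writing $\ta(a\ot c)=\taa{c}\ot\taa{a}$, this reads $u_A(a)\,u_C(c)=u_C(\taa{c})\,u_A(\taa{a})$, i.e.\ $\ta$ encodes the rule for commuting a factor from $A$ past a factor from $C$ inside $X$. Normality is then immediate from unitality of $u_C,u_A$: since $u_A(1_A)=1_X=u_C(1_C)$ one gets $\xi(c\ot 1_A)=u_C(c)$ and $\xi(1_C\ot a)=u_A(a)$, so that $\ta(1_A\ot c)=c\ot 1_A$ and $\ta(a\ot 1_C)=1_C\ot a$, which is exactly \eqref{nor}.

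It then remains to check that $\xi$ intertwines the twisted product with $m_X$. Using that $u_C,u_A$ are algebra maps together with the defining identity for $\ta$, I would compute
\[
\begin{aligned}
\xi(c\ot a)\,\xi(c'\ot a')
&=u_C(c)\,u_A(a)\,u_C(c')\,u_A(a')\\
&=u_C\big(c\,\taa{c'}\big)\,u_A\big(\taa{a}\,a'\big)
=\xi\big((c\ot a)\dta(c'\ot a')\big),
\end{aligned}
\]
so that $\xi\circ m^\ta=m_X\circ(\xi\ot\xi)$. Since $\xi$ is a linear isomorphism and $m_X$ is associative, the transported product $m^\ta=\xi^{-1}\circ m_X\circ(\xi\ot\xi)$ is automatically associative; by the equivalence recorded at \eqref{Oeq}, this is precisely the statement that $\ta$ satisfies the twisting condition. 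I expect deducing \eqref{Oeq} to be the only delicate point, and the device of obtaining associativity for free by transport of structure -- rather than verifying the hexagon identity \eqref{Oeq} by a direct computation -- is what keeps the argument clean. Together with normality, this exhibits $\ta$ as a normal twisting map and $\xi$ as an algebra isomorphism $C\ot^\ta\!A\simeq X$, completing the proof.
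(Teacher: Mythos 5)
Your proposal is correct and follows essentially the same route as the paper: for $(1)\Rightarrow(2)$ you use the canonical maps $u_C(c)=c\ot 1_A$, $u_A(a)=1_C\ot a$ of \eqref{projpi_alg} and observe $\xi(c\ot a)=c\ot a$, and for $(2)\Rightarrow(1)$ you construct $\ta=\xi^{-1}\circ m_X\circ(u_A\ot u_C)$ exactly as the paper does, with associativity of $m^\ta$ (hence \eqref{Oeq}) obtained by transport of structure. The paper only sketches these steps, so your write-up is a faithful expansion rather than a different argument.
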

The proof is based on the fact that if $C \ot^\ta \!A$ is a twisted tensor product algebra coming from a normal twisting map  $\ta$, then the maps  \begin{equation}
\label{projpi_alg} 
u_C  : C \to C \ot^\ta \!A \, , \quad c \mapsto c \ot 1_A \, , \qquad
u_A  : A \to C \ot^\ta \!A \, , \quad a \mapsto 1_C \ot a \, ,
\end{equation}    
are algebra morphisms, and the map 
$
\xi:=m^\ta \circ (u_C \ot u_A):C \ot A   \to C \ot^\ta \!A
$
is an isomorphism of vector spaces, being  $\xi(c \ot a)= c \ot a$. For the opposite implication, one constructs the normal twisting map as $\ta=\xi^{-1}\circ m_X \circ  (u_A \ot u_C)$.

The  construction of twisted tensor products was generalised to coalgebras and bialgebras in \cite[\S 2]{cae}.
The mixed case of $C$ a coalgebra and $A$ an algebra was studied in \cite[\S 2]{BrMa98} where entwining structures were used to  generalise Hopf--Galois extensions to coalgebra principal bundles (see also \cite[\S 34]{BW}).

For examples of twisted tensor product of algebras see e.g. \cite[\S 2]{cae}. 
Here we recall only the fundamental example of Majid's double crossed products (and, in particular, smash products), of which twisted tensor products are a generalization.   
\begin{ex}\label{ex:smash} The basic example of a twisting map is the one arising from the double crossed product (or bicrossproduct) construction of bialgebras. Let us briefly recall this framework, see e.g. \cite[\S 10.2.5]{KS}. 
Let $\{C, A\}$ be a matched pair of bialgebras: $(A, \triangleleft)$ is a right $C$-module coalgebra and $(C, \triangleright)$ is a left $A$-module coalgebra with the following compatibility conditions of the two actions
\begin{align}
&(aa') \triangleleft c = (a \triangleleft (\one{a'} \triangleright \one{c})) (\two{a'} \triangleleft \two{c}) \, ,  \qquad 1_A \triangleleft c = \varepsilon_C(c)
\\
&a \triangleright (cc') = (\one{a} \triangleright \one{c}) ((\two{a} \triangleleft \two{c}) \triangleright c')\, ,   \qquad a \triangleright 1_C = \varepsilon_A(a)
\\
&(\one{a} \triangleleft \one{c}) \ot (\two{a} \triangleright \two{c}) = (\two{a} \triangleleft \two{c}) \ot (\one{a} \triangleright \one{c}) .
\end{align}
The  vector space $C \otimes A$ is an algebra  with product 
\beq
(c \ot a) \cdot^\ta (c' \ot a') =  c (\one{a} \triangleright \one{c'}) \ot (\two{a} \triangleleft \two{c'}) a',
\eeq
corresponding to the twisting map
\beq\label{ta-double}
\ta: A \ot C \to C \ot A : \, a \ot c \mapsto (\one{a} \triangleright \one{c}) \ot (\two{a} \triangleleft \two{c}).
\eeq
The resulting  algebra is a bialgebra with the usual tensor product coalgebra structure of $C \ot A$. It is called the double crossed product bialgebra of the pair $\{C,A\}$ and denoted $C \bowtie A$.  When one of the two actions is trivial the construction reduces to smash (or crossed) products.
\end{ex}

\subsection{Invertible twisting maps and $*$-structures}\label{sec:inv*}
We next comment on the existence of a  $*$-structure for a twisted tensor product algebra and its relation to the invertibility of the  twisting map.

Suppose  $\ta:A\ot C\to C\ot A$  is a  twisting map which is invertible as a linear map. Let
$\ta^{-1}: C\ot A \to A\ot C$ denote its inverse.  
Associativity of the product defined by an invertible map $\ta$ does not in general imply associativity for the product defined by $\ta^{-1}$; this implication requires the stronger  distributive laws \eqref{C1} and \eqref{C2}.
If $\ta:A\ot C\to C\ot A$  is a linear map that satisfies the conditions \eqref{C1} and \eqref{C2}, so does its inverse and thus $\ta^{-1}$ defines an
associative product $m^{\tain}$ for $A \ot C$.  The resulting
twisted tensor product algebra $A\ot^{\tain} \!\!C$ is isomorphic to $C \ot^\ta \!A$.

\begin{lem}
Let $\ta:A\ot C\to C\ot A$  be a linear map that satisfies the conditions \eqref{C1} and \eqref{C2}.
The twisted tensor product algebras $A\ot^{\tain} \!\!C$ and $C \ot^\ta  A$ are isomorphic via the twisting map, that is 
$$
\ta: (A\ot^{\tain} \!\!C, \, m^{\tain})  \longrightarrow (C \ot^\ta  A, \, m^\ta)
$$
is an algebra isomorphism:
\beq\label{twist-am} 
\ta \big( (a \ot c) \dtain (a'  \ot  c' )\big)  =  \big( \ta(c \ot a) \big) \dta \big(\ta (c'  \ot  a' )\big).
\eeq
\end{lem}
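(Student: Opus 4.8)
The plan is to check the two defining properties of an algebra isomorphism separately. Bijectivity is free: $\ta$ is invertible as a linear map by hypothesis, with inverse $\tain$, so it only remains to prove multiplicativity, i.e. the identity \eqref{twist-am}. I would phrase this as the single identity
\[
\ta \circ m^{\tain} \;=\; m^\ta \circ (\ta \ot \ta)
\]
of linear maps $(A \ot C) \ot (A \ot C) \to C \ot A$, where $m^{\tain}=(m_A \ot m_C)(\id_A \ot \tain \ot \id_C)$ is the product of $A \ot^{\tain}\! C$ (associative, by the discussion preceding the statement, since $\tain$ again obeys \eqref{C1} and \eqref{C2}).

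To prove this identity I would first rewrite $\ta \circ (m_A \ot m_C)$ purely in terms of $\ta$, $m_A$ and $m_C$, by factoring $m_A \ot m_C = (m_A \ot \id_C)(\id_A \ot \id_A \ot m_C)$ and pushing the outer $\ta$ through the two multiplications: one application of the distributive law \eqref{C1} moves $\ta$ across $m_A$, and one application of \eqref{C2} moves the resulting inner $\ta$ across $m_C$. Precomposing with $\id_A \ot \tain \ot \id_C$ is then the decisive step: in the expanded composite the innermost copy of $\ta$ acts exactly on the two middle tensor factors produced by $\tain$, so that $\ta \circ \tain = \id$ collapses there. Concretely, writing $\tain(c \ot a') = \taain{a'} \ot \taain{c}$, the product of $A \ot^{\tain}\! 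C$ reads $(a \ot c) \dtain (a' \ot c') = a\,\taain{a'} \ot \taain{c}\, c'$, and after the cancellation $\ta(\taain{a'} \ot \taain{c}) = c \ot a'$ the remaining operations reassemble precisely into $(\taa{c} \ot \taa{a}) \dta (\taa{c'} \ot \taa{a'})$, which by \eqref{mult-psi} is $\ta(a \ot c) \dta \ta(a' \ot c')$, as required.

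I expect the main obstacle to be organizational rather than conceptual: one has to factor the double multiplication $m_A \ot m_C$ and insert \eqref{C1} and \eqref{C2} in the correct order and position so that, after precomposing with $\tain$, the cancellation $\ta \circ \tain = \id$ lands on the right pair of factors; keeping the iterated labels $[\ta]$ and $[\ta]_2$ straight through the final application of \eqref{C2el} is where care is needed. As a consistency check, since $\tain$ also satisfies \eqref{C1} and \eqref{C2}, the same argument with the roles of $\ta$ and $\tain$ interchanged shows that $\tain : C \ot^\ta\! A \to A \ot^{\tain}\! C$ is multiplicative as well, confirming that $\ta$ is an algebra isomorphism; if in addition $\ta$ is normal, then \eqref{nor} gives $\ta(1_A \ot 1_C) = 1_C \ot 1_A$, so the isomorphism is unital.
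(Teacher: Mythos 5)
Your proposal is correct and follows essentially the same route as the paper's proof: both combine the distributive laws \eqref{C1} and \eqref{C2} into the single identity \eqref{C12} for $\ta(aa'\ot cc')$, apply it to the product of $A\ot^{\tain}\!C$ so that the cancellation $\ta\circ\tain=\id$ lands on the two middle tensor factors, and reassemble the remainder as $\ta(a\ot c)\dta\,\ta(a'\ot c')$. No gaps.
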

\begin{proof}
The combination of \eqref{C1el} and \eqref{C2el} gives
\beq\label{C12}
\ta(aa' \ot cc') =  \taatre{\taa{c}} \taaqu{\taadue{c'}}   \ot \taaqu{\taatre{a}} \taadue{\taa{a'}}  .
\eeq
Using this we have
\begin{align*}
\ta(a \taain{a'} \ot \taain{c} c')  
&=   \taatre{\taa{\taain{c} }} \taaqu{\taadue{c'}}   \ot \taaqu{\taatre{a}} \taadue{\taa{\taain{a'}}} 
=   \taadue{c}   \taatre{\taa{c'}}   \ot \taatre{\taadue{a}}  \taa{a'}
\\
&=   \taadue{c}  \ta \big(\taadue{a} \ot  \taa{c'}  \big)  \taa{a'}
=  (\taadue{c}  \ot \taadue{a}) \dta (  \taa{c'} \ot \taa{a'})
\end{align*}
thus showing \eqref{twist-am}.
\end{proof}
Assuming now that the algebras $A$ and $C$ are $*$-algebras there are two natural $*$-structures on the twisted tensor product algebra that can be considered.
\begin{lem} 
Let $A$ and $C$ be $*$-algebras, with involutions denoted  $*_A$ and $*_C$ respectively. 
Let $\ta:A\ot C\to C\ot A$  be a twisting map, with $\ta(1_A \ot 1_C)= 1_C \ot 1_A$.
Then the  twisted tensor product algebra $C \ot^\ta \!A$  is a $*$-algebra with involution $*_C \otimes *_A$ if and only if $\ta$ is the flip map, $\ta(c \ot a) = a \ot c$.
\end{lem}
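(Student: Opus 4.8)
The plan is to prove the two implications separately. The reverse implication is essentially the classical statement that the factorwise involution is a $*$-structure on an ordinary tensor product of $*$-algebras, whereas the forward implication is where the rigidity forcing $\ta$ to be the flip emerges; I expect the forward direction to be the substantive part, and the essential device will be to evaluate the anti-multiplicativity identity at a single well-chosen pair of units. Throughout write $\dagger$ for the factorwise map $c \ot a \mapsto c^{*_C} \ot a^{*_A}$, that is the candidate involution $*_C \ot *_A$ of the statement; it is antilinear and squares to the identity since $*_C$ and $*_A$ do, so the only content in being a $*$-structure is anti-multiplicativity.

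For the implication in which $\ta$ is the flip: then $m^\ta$ is the ordinary product $(c \ot a)\dta(c' \ot a') = cc' \ot aa'$ and $C \ot^\ta \!A$ is the usual tensor product algebra. I would then verify anti-multiplicativity in one line,
\[
\big((c \ot a)\dta(c'\ot a')\big)^{\dagger} = (cc')^{*_C} \ot (aa')^{*_A} = {c'}^{*_C}c^{*_C} \ot {a'}^{*_A}a^{*_A} = (c'\ot a')^{\dagger}\dta(c\ot a)^{\dagger},
\]
which together with antilinearity and $\dagger^2 = \id$ gives the $*$-structure.

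For the converse, assume $\dagger$ is an involution on $C \ot^\ta \!A$. Expanding the relation $\big((c\ot a)\dta(c'\ot a')\big)^{\dagger} = (c'\ot a')^{\dagger}\dta(c\ot a)^{\dagger}$ through \eqref{mult-psi} gives, for all $a,a'\in A$ and $c,c'\in C$,
\[
(\taa{c'})^{*_C}\,c^{*_C} \ot {a'}^{*_A}\,(\taa{a})^{*_A} = {c'}^{*_C}\,\taa{(c^{*_C})} \ot \taa{({a'}^{*_A})}\,a^{*_A},
\]
where on the left $\ta$ is applied to $a\ot c'$ and on the right to ${a'}^{*_A}\ot c^{*_C}$. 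I would then specialise to $a'=1_A$ and $c=1_C$: using $1_A^{*_A}=1_A$, $1_C^{*_C}=1_C$ and the hypothesis $\ta(1_A\ot 1_C)=1_C\ot 1_A$, the right-hand side collapses to ${c'}^{*_C}\ot a^{*_A}$, while the left-hand side becomes $(\taa{c'})^{*_C}\ot(\taa{a})^{*_A}$ with $\ta(a\ot c')=\taa{c'}\ot\taa{a}$. Applying the antilinear involution $\dagger$ once more then yields $\taa{c'}\ot\taa{a}=c'\ot a$, that is $\ta(a\ot c')=c'\ot a$ for all $a$ and $c'$, so $\ta$ is the flip.

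The main obstacle is purely bookkeeping: keeping straight, on each side of the anti-multiplicativity relation, to which tensor leg the map $\ta$ is applied, and that the arguments on the right carry the involutions. Once this is set up correctly, the key point is that the single evaluation $a'=1_A$, $c=1_C$ already forces the flip, so that neither the associativity \eqref{Oeq} nor the full normality \eqref{nor} of $\ta$ is needed beyond the stated normalisation $\ta(1_A\ot 1_C)=1_C\ot 1_A$.
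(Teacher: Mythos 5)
Your proposal is correct and follows essentially the same route as the paper's proof: the flip direction is the standard one-line verification, and for the converse both you and the paper expand the anti-multiplicativity identity and specialise to $a' \ot c = 1_A \ot 1_C$, using the normalisation $\ta(1_A \ot 1_C) = 1_C \ot 1_A$ and one further application of the involution to conclude that $\ta$ is the flip. Your closing observation that only this weak normalisation (not full normality or associativity) is used is consistent with the paper's hypotheses.
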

\begin{proof}
One implication is direct. For the other implication, suppose $C \ot^\ta \!A$  is a $*$-algebra with involution $*_C \otimes *_A$, that is
\begin{align*} 
\big((c \ot a) \dta (c' \ot a')\big)^*  
&= (c \taa{c'})^* \ot (\taa{a} a')^*
= (\taa{c'})^* c^* \ot a'^* (\taa{a})^* 
\\
&
= (c'^* \ot a'^*) \dta   (c^* \ot a^*)  = 
c'^* \taa{(c^*)} \ot \taa{(a'^*)} a^*
\end{align*}
for all $a,a' \in A$ and $c,c' \in C$.
Then, in particular, for $a' \ot c=1_A \ot 1_C$, the identity
 gives
$\taa{c'} \ot \taa{a} = c'   \ot  a$ for all $a \in A$, $c' \in C$, that is $\ta$ is the flip map.
\end{proof}
\begin{lem}\label{lem:*inv}
Let $\ta:A\ot C\to C\ot A$  be a linear map.
The map 
\beq\label{*ta}
* := \ta (*_A \otimes *_C)\circ \textup{flip}: C \ot A \to C \ot A \, , \quad  c \ot a \mapsto (c \ot a)^*:=\ta(a^* \ot c^*)
\eeq
is an involution
 if and only if
$\ta$ is invertible with inverse 
\beq\label{ns*ta}
\ta^{-1} =  (*_A \otimes *_C) \circ \textup{flip} \circ \ta \circ (*_A \otimes *_C)\circ \textup{flip} : C \ot A \to A \ot C.
\eeq
In this case, for $\ta$ satisfying  \eqref{C1} and \eqref{C2}, the involution in  \eqref{*ta} defines a $*$-structure on the twisted tensor product algebra $C \ot^\ta \!A$.
\end{lem}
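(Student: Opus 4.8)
The plan is to package the two conjugate-and-flip maps appearing in \eqref{*ta} and \eqref{ns*ta} into a single antilinear bijection $J := (*_A \ot *_C)\circ \textup{flip}$, which sends $c \ot a \mapsto a^* \ot c^*$ and, read on the appropriate tensor factors, satisfies $J \circ J = \id$ both on $C \ot A$ and on $A \ot C$ (so $J$ is its own inverse, hence bijective). With this notation the map of \eqref{*ta} is exactly $* = \ta \circ J : C \ot A \to C \ot A$; since $J$ is antilinear and $\ta$ is linear, $*$ is automatically antilinear, and the entire content of ``involution'' reduces to the single identity $* \circ * = \ta \circ J \circ \ta \circ J = \id_{C \ot A}$. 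I would then set $\beta := J \circ \ta \circ J : C \ot A \to A \ot C$, which is precisely the candidate inverse \eqref{ns*ta}, and observe that $\ta \circ \beta = \ta\,J\,\ta\,J = * \circ *$.

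For the equivalence I would argue both implications from this one identity. If $\ta$ is invertible with $\ta^{-1} = \beta$, then $* \circ * = \ta \circ \beta = \id_{C \ot A}$, so $*$ is an involution. Conversely, if $*$ is an involution then $\ta \circ \beta = \id_{C \ot A}$ exhibits $\beta$ as a right inverse of $\ta$; to produce the left inverse I would compose $\ta\,J\,\ta\,J = \id_{C \ot A}$ on the left with $J$ and use $J^2 = \id$ to get $J\,\ta\,J\,\ta\,J = J$, that is $(\beta \circ \ta)\circ J = J$, and then cancel the bijection $J$ on the right to obtain $\beta \circ \ta = \id_{A \ot C}$. Hence $\ta$ is invertible with $\ta^{-1} = \beta$, which is exactly \eqref{ns*ta}.

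For the final claim I would exploit the algebra isomorphism of \eqref{twist-am}. Since $\ta$ obeys \eqref{C1} and \eqref{C2}, so does $\tain$, the product $m^{\tain}$ is associative, and $\ta : (A \ot^{\tain} C, m^{\tain}) \to (C \ot^\ta A, m^\ta)$ is an algebra homomorphism. It then suffices to show that $J : (C \ot^\ta A, m^\ta) \to (A \ot^{\tain} C, m^{\tain})$ is an antilinear algebra anti-homomorphism, for then $* = \ta \circ J$ is an antilinear anti-homomorphism, i.e. a genuine $*$-structure (and $1^* = 1$ whenever $\ta$ is normal, since then $\ta(1_A \ot 1_C) = 1_C \ot 1_A$). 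To verify the anti-homomorphism property I would expand both sides of $J\big((c \ot a)\dta(c' \ot a')\big) = J(c' \ot a') \dtain J(c \ot a)$ using the explicit products $m^\ta$ of \eqref{mult-psi} and $m^{\tain}$, reducing the claim to the identity $\tain(c'^* \ot a^*) = (\taa{a})^* \ot (\taa{c'})^*$, where $\ta(a \ot c') = \taa{c'} \ot \taa{a}$; but the right-hand side is precisely $J\,\ta\,J$ applied to $c'^* \ot a^*$, so this is the inverse formula $\tain = J\,\ta\,J$ already established above.

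The main obstacle is the bookkeeping in this last step: correctly identifying the twisted product $m^{\tain}$ on $A \ot^{\tain} C$ and matching the $*_A$- and $*_C$-conjugates against the tensor-flip orderings, so that the expansion collapses exactly onto the relation $\tain = J\,\ta\,J$. Everything else is formal once $J$, $*$ and $\beta$ have been related as above.
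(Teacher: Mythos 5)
Your proposal is correct, and its first half is essentially the paper's argument made explicit: the paper simply asserts that the involution condition $\left(\ta(a^*\ot c^*)\right)^*=c\ot a$ is equivalent to the map in \eqref{ns*ta} being a two-sided inverse of $\ta$, whereas you justify this cleanly by writing $*=\ta\circ J$ with $J=(*_A\ot *_C)\circ\textup{flip}$, $J^2=\id$, and extracting both the right inverse ($\ta\circ J\ta J=*\circ *$) and the left inverse (cancel the bijection $J$ in $J\ta J\ta J=J$). For the second half your route genuinely differs from the paper's: the paper verifies the anti-multiplicativity of $*$ by a direct expansion of $\big((c\ot a)\dta(c'\ot a')\big)^*$ using the combined distributive law \eqref{C12} and then simplifying with the inverse relation, while you factor the problem through the already-established algebra isomorphism \eqref{twist-am} $\ta:(A\ot^{\tain}\!C,m^{\tain})\to(C\ot^{\ta}\!A,m^{\ta})$ and reduce everything to showing that $J$ is an anti-homomorphism between the two twisted products, which collapses onto the identity $\tain=J\ta J$ from the first half. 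Your decomposition is conceptually tidier and reuses prior work ($*$ is an algebra isomorphism composed with an anti-isomorphism, hence an anti-isomorphism), at the cost of invoking the auxiliary algebra $A\ot^{\tain}\!C$; the paper's computation is longer but self-contained and never needs to name $m^{\tain}$. Both arguments ultimately rest on the same inputs, namely \eqref{C1}--\eqref{C2} (via \eqref{C12}) and the inverse formula \eqref{ns*ta}, so the proofs are equivalent in strength; your parenthetical remark that $1^*=1$ requires normality of $\ta$ is a correct observation that the paper leaves implicit.
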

\begin{proof}
The  condition for  \eqref{*ta} to be an involution, 
\beq\label{invol}
\left( \ta (a^*  \ot c^*) \right)^* = {\taadue{{\taa{c}}^*}}^* \ot {\taadue{{\taa{a}}^*}}^*= c \ot a \; , \quad  \forall \, a \in A ,\; c\in C ,
\eeq
is equivalent to the   map $\ta^{-1} $ in \eqref{ns*ta} being a left and right inverse of $\ta$.

As for the second statement, we have to show that \eqref{*ta} is an anti-algebra map for the product $m_\ta$ of $C \ot^\ta \!A$.  
For $a,a' \in A$ and $c,c' \in C$:
\begin{align*}
\big((c \ot a) \dta (c' \ot a')\big)^*  
&
= \big( c \taa{c'} \ot \taa{a} a'\big)^*  
= \ta  \big((\taa{a} a')^*  \ot (c \taa{c'})^* \big)
= \ta  \big(a'^*(\taa{a})^*  \ot ( \taa{c'})^* c^* \big)
\\
&= 
 \taaqu{\taadue{( \taa{c'})^*}} \taacq{\taatre{c^* }}   \ot \taacq{\taaqu{(a'^*)}}\taatre{\taadue{   (\taa{a})^*   }}  
\end{align*}
where we used 
the combination \eqref{C12} of \eqref{C1el} and \eqref{C2el}
for the last equality.
By using \eqref{invol}, the expression simplifies to  
\begin{align*}
\big((c \ot a) \dta (c' \ot a')\big)^*  
&=  \taadue{(c'^*)} \taatre{\taa{(c^* )}}   \ot \taatre{\taadue{(a'^*)}} \taa{(a^*)} 
\\
&=  \big( \taadue{c'^*}  \ot \taadue{a'^*}  \big)\dta \big( \taa{c^*}  \ot \taa{a^*} \big)
=(c' \ot a')^* \dta (c \ot a)^* .\qedhere
\end{align*} 
\end{proof}
In the hypothesis of the previous Lemma, being $\ta$ invertible, one can also form the twisted tensor product algebra $A\ot^{\tain} \!\!C$ 
with associative product $m^{\ta^{-1}}$. Condition \eqref{ns*ta} is symmetric for the exchange of $\ta$ with $\ta^{-1}$ (and $A \leftrightarrow C$), being  $(*_A \otimes *_C) \circ \textup{flip}  =    \textup{flip}\circ (*_C \otimes *_A)$, and thus $A\ot^{\tain} \!\!C$ becomes
 a $*$-algebra with 
$$(a \ot c)^*:= \tain(c^* \ot a^*) = (\taa{a})^* \ot (\taa{c})^* .$$ The map 
 $\ta: A\ot^{\tain} \!\!C \to C \ot^\ta  A $ is a $*$-algebra isomorphism.

\section{Push-forward of Hopf--Galois extensions}\label{sec:twistedHG} 

 This section applies the theory of twisted tensor product algebras to the study of push-forwards of Hopf--Galois extensions.
In \S \ref{sec:pft} 
we study conditions for which the multiplication $m^\ta$ of a twisted tensor product algebra 
$C \ot^\ta \!A$ descends to a well-defined algebra structure on  the  covariant $\ff$-extension $C \ot_B A$ of $A$ via a map  $\ff: B \to C$, for $B$ a subalgebra of $A$. 
Then, in \S \ref{sec:pf-HG}, we specify the construction to the case of $B \subset A$ being an $H$-Galois extension. In this context,  $C \ot_B A$ is seen as an algebraic counterpart of a fibered product of the total space of a principal bundle along a map on the base space.   
We thus study  push-forwards of $H$-Galois algebra extensions as the algebraic analogues of  pull-backs of principal bundles.

\subsection{Twisted products for covariant extensions}\label{sec:pft}

We determine the conditions under which the multiplication $m^\ta$ of a twisted tensor product algebra 
$C \ot^\ta \!A$ descends to the quotient of $C \ot A$ over a subalgebra $B \subset A$. 

Let $A$ and $C$ be  two algebras and 
let $B$ be a subalgebra of $A$ with $\ff: B \to C$ an algebra morphism. 
The quotient $C \ot_B A $, also called the push-forward of $A$ along $\ff$, is defined  as the covariant $\ff$-extension of the $B$-module $A$ via $\ff$: 
\beq\label{push}
C \ot_B A := C \ot A /{\langle c \ot b a - c \, \ff(b) \ot a  \; , \; a \in A, \,b \in B, \, c \in C \rangle} \, .
\eeq
It  is a left $C$-module  and a right $A$-module  with commuting actions $c'(c \ot a)= c'c \ot a$ and $(c \ot a)a'= c \ot aa'$, for $a,a' \in A$, $c, c' \in C$. We denote $\pi_B: C \ot A \to C \ot B$, $\pi_B(c \ot a):= c \otb a $, the quotient map; by construction, it is a morphism of left $C$-modules and right $A$-modules.
We also recall that 
if $A$  is faithfully flat as a left $B$-module, then  $C \ot_B A$  is faithfully flat as a left $C$-module, \cite[Prop. 5 Ch.I \S 3]{bour}.

The problem of giving $C \ot_B A$ an algebra structure is open.   
The usual multiplication
$m_{C \ot A} = (m_C \ot m_A) \circ (\id_C \ot \textup{flip} \ot \id_A)$ of the tensor product algebra  $C \ot A$ descends to a well-defined multiplication on the 
push-forward $C \ot_B A$   under the hypothesis that the algebra $B$ is central in $A$, and thus in particular commutative, and that the image $\ff(B)$ of $\ff$  is central in $C$, \cite{kassel}.  

Here we avoid the centrality condition by giving $C \ot A$   a twisted  algebra multiplication $m^\ta$, and we study  compatibility conditions between the twisting map  $\ta$ and the map $\ff$ which ensure that the algebra structure of $C \ot^\ta \!A =(C \ot A, m^\ta)$ descends to the quotient $C \ot_B A$.

Consider  the vector space $C$ as a $B$-bimodule via the map $\ff$   and   $A$,  $A \ot C$, $C \ot A$ and $C \otb A$ as $B$-bimodules in the obvious way.
We write   $\tap:= \pi_B \circ \ta: A \ot C \to C \otb A$ for the composition of  $\ta$ with the projection map $\pi_B$. 
We have the following result.

\begin{prop}\label{prop:Blin}
Let $\ta:A\ot C\to C\ot A$  be a twisting map  and $C \ot^\ta \!A = (C \ot A, m^\ta)$ be the corresponding twisted tensor product algebra. 
The  multiplication $m^\ta$, defined as in \eqref{opsi}, descends to a well-defined multiplication on 
the push-forward
 $C \ot_B A$  if and only if the map $\tap$ is a $B$-bimodule morphism, that is,  
\beq\label{f-lin}
\tap (b (a \ot c))= \tap (ba \ot c) = \ff(b) \tap (a \ot c) \, \,  ; \quad \tap ((a \ot c)b)= \tap (a \ot c \, \ff(b)) = \tap (a \ot c) b 
\eeq
for all $a \in A$, $b \in B$, $c \in C$.
\end{prop}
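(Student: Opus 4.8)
The plan is to characterize descent of $m^\ta$ by testing $\pi_B \circ m^\ta$ against the generators of $\ker \pi_B$. Writing $N := \ker \pi_B = \langle c \ot ba - c\,\ff(b) \ot a\rangle$ so that $C \otb A = (C \ot A)/N$, the bilinear map $m^\ta$ descends to a well-defined map $(C \otb A) \ot (C \otb A) \to C \otb A$ if and only if $\pi_B \circ m^\ta$ factors through $\pi_B \ot \pi_B$, i.e.\ if and only if it annihilates $N \ot (C \ot A)$ (variation in the left slot) and $(C \ot A) \ot N$ (variation in the right slot). By bilinearity it suffices to test one generator of $N$ at a time against an arbitrary second factor. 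So the first step is this reduction of the iff to two ``one generator'' conditions.

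The key computational ingredient is a single identity expressing $\pi_B \circ m^\ta$ through $\tap$ and the module structures of $C \otb A$. Using the explicit form \eqref{mult-psi}, $\ta(a \ot c') = \taa{c'} \ot \taa{a}$, and the fact (recalled above) that $\pi_B$ is a morphism of left $C$-modules and right $A$-modules, I obtain
\[
\pi_B\big( (c \ot a) \dta (c' \ot a') \big) = \pi_B\big(c\,\taa{c'} \ot \taa{a}\,a'\big) = c\;\tap(a \ot c')\;a',
\]
where $c\,(-)$ is the left $C$-action and $(-)\,a'$ the right $A$-action on $C \otb A$. This turns every projected twisted product into a left/right sandwich of $\tap$, and it is the workhorse for both slots. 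Note that associativity of $m^\ta$, i.e.\ the twisting-map axiom \eqref{Oeq}, is not needed here; only the explicit form of $m^\ta$ and the module structure of the quotient enter.

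I then substitute the two families of generators. For the left slot, comparing $\pi_B(m^\ta((c_0 \ot ba_0) \ot (c' \ot a'))) = c_0\,\tap(ba_0 \ot c')\,a'$ with $\pi_B(m^\ta((c_0\ff(b) \ot a_0) \ot (c' \ot a'))) = c_0\big(\ff(b)\,\tap(a_0 \ot c')\big)a'$ shows that annihilation in the left slot holds for all arguments exactly when $\tap(ba_0 \ot c') = \ff(b)\,\tap(a_0 \ot c')$, where necessity follows by evaluating at $c_0 = 1_C$, $a' = 1_A$. For the right slot, comparing $\pi_B(m^\ta((c \ot a) \ot (c_0 \ot ba_0'))) = c\big(\tap(a \ot c_0)\,b\big)a_0'$ with $\pi_B(m^\ta((c \ot a) \ot (c_0\ff(b) \ot a_0'))) = c\,\tap(a \ot c_0\ff(b))\,a_0'$ shows annihilation in the right slot holds exactly when $\tap(a \ot c_0)\,b = \tap(a \ot c_0\ff(b))$. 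These two equivalences are precisely the left and right $B$-bimodule identities in \eqref{f-lin}, which settles both directions of the statement.

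The only real care needed, and hence the main obstacle, is the bookkeeping of the three distinct roles played by $b$: as a left multiplier on $A$ in the source, as $\ff(b)$ acting on $C$, and—in the right slot—as a right $A$-multiplier sitting between $\taa{a}$ and $a_0'$. The displayed identity is exactly what cleanly separates these roles, and the necessity direction hinges on the observation that specializing the free factors to unit elements isolates the bimodule relations; beyond this I expect no genuine difficulty.
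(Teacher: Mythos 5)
Your proof is correct and follows essentially the same route as the paper's: test $\pi_B\circ m^\ta$ on the generators of the kernel of $\pi_B\ot\pi_B$, rewrite each projected product as $c\,\tap(a\ot c')\,a'$ using that $\pi_B$ is a left $C$-module and right $A$-module map, and specialize the free factors to units to get necessity. The only cosmetic difference is that you vary one tensor slot at a time and land directly on the two identities of \eqref{f-lin}, whereas the paper varies both representatives simultaneously and then notes the equivalence of the resulting mixed conditions with \eqref{f-lin}.
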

\begin{proof}
The map $m^\ta$ descends to a well-defined map on 
 $(C \ot_B A) \ot (C \ot_B A)$  if and only if 
$$
 \pi_B \Big( \ta (b a \ot c)b'-  \ff(b) \ta (a \ot c \, F(b')) \Big)=0 \quad ; \quad
 \pi_B \Big( \ta (b a \ot c \, \ff(b')) -  \ff(b) \ta (a \ot c ) b \Big)=0
 $$
 for all $a \in A$, $b,b' \in B$ and $c  \in C$. That is,  being $\pi_B$ a $B$-bimodule map,
 if and only if 
 $$
   \tap (b a \ot c)b'= \ff(b) \tap (a \ot c \, \ff(b'))  \quad ; \quad
   \tap (b a \ot c \, \ff(b') ) =  \ff(b) \tap (a \ot c ) b.
 $$
 It is then straightforward to check that these identities are equivalent to those in \eqref{f-lin}.
 \end{proof}
The resulting algebra
 $C \ot_B A$,  with product $m^\ta$,  will be denoted by $\pf$ and  called the 
 twisted push-forward algebra. 
 \begin{rem}
The requirement that $m^\ta$ descends to a well-defined map on the quotient $C \otb A$ is equivalent 
 to that of  $I:={\langle c \ot b a - c \, \ff(b) \ot a  \; , \; a \in A, \,b \in B, \, c \in C \rangle}$ being an algebra ideal:
 \begin{align*}
 (c' \ot a') \cdot^\ta (c \ot b a - c \, \ff(b) \ot a) \in I \quad \iff \quad \pi_B \Big( \ta (a' \ot c)b -   \ta (a' \ot c \, F(b)) \Big) =0
 \\
 (c \ot b a - c \, \ff(b) \ot a) \cdot^\ta (c' \ot a') \in I \quad \iff \quad \pi_B \Big( \ta (ba \ot c') -   \ff(b) \ta ( a \ot c') \Big) =0 \, ,
 \end{align*}
 for all $a,a' \in A$, $b \in B$ and $c,c'  \in C$.
We note that the $B$-linearity condition \eqref{f-lin} of the map $\tap$ is weaker  than an analogous $B$-linearity condition for the map $\ta$, $\ta(ab \ot c \ff(b'))= \ff(b) \ta (a \ot c) b$.  The $B$-linearity of $\ta$ would corresponds to the requirement that the product of an element of the algebra $C \ot^\ta \!A$ and one of $I$  is zero. 
\end{rem}
\begin{rem} The previous remark also indicates that for a surjective map
$\ff$ it would be too strong to assume $B$-linearity of $\ta$ . 
For  $\ff=B \to C$  surjective, the  $B$-linearity  of a twisting map $\ta:A\ot C\to C\ot A$  would require
$\ta (a \ot c)=   \ta (a \ot 1_C) b_c$ for all $a \in A$, $c \in C$ and  $b_c \in B$ any element such that $\ff(b_c)=c$.  
Thus, in order for this map to be well-defined, one would also need
$1_C \ot a \, b_c = 1_C \ot a \, b'_c$  for any two elements $b_c, \,b'_c \in \ff^{-1}(c)$.
Moreover $\ta$ could not be (right) normal because this would require  $c \ot 1_A = \ta(1_A \ot c) = 1_C \ot b_c$ for all $c \in C$ and $b_c \in \ff^{-1}(c)$.
\end{rem}
 
We conclude this subsection with a comment on invertible twisting maps and $*$-structures.
Suppose the twisting map $\ta$ is invertible with inverse map $\ta^{-1}$ that is a twisting map as well (see \S \ref{sec:inv*}).  From  Proposition \ref{prop:Blin}, the multiplication $m^{\ta^{-1}}$ of  $A \ot^{\ta^{-1}} \!\!C$ descends to a well-defined algebra structure on 
the push-forward $A \ot_B C := A \ot C /J$ if and only if   $J:={\big\langle a \, b \ot c - a \ot  \ff(b) \, c  \; , \; a \in A, \,b \in B, \, c \in C \big\rangle}$ is an algebra ideal. Equivalently, the composition map of the twisting map $\ta^{-1}$   with the projection $\widetilde\pi_B: A \ot C \to A \otb C$ has to satisfy  conditions analogous to \eqref{f-lin}. This needs not in general follow from $I$ being an algebra ideal.

\begin{lem}
Suppose $A$ and $C$ are $*$-algebras, with $B$ a $*$-subalgebra of $A$. Let $\ta$, satisfying  \eqref{C1} and \eqref{C2}, be 
such that $* = \ta (*_A \otimes *_C)\circ \textup{flip}$ gives a $*$-structure on the twisted tensor product algebra $C \ot^\ta \!A$ (see Lemma \ref{lem:*inv}). 
Let  $\ff:B \to C$ be a $*$-algebra morphism.  The $*$-structure   on   $C \ot^\ta \!A$, $(c \ot a)^*=\ta(a^* \ot c^*)$, 
descends to a well-defined $*$-structure on the  twisted push-forward algebra $\pf$ 
if and only if  $\tap$ is also inner $B$-linear:
\beq\label{innerB}
\tap (a b \ot c)= \tap (a \ot \ff(b)c), \quad \forall \, a \in A, \; b\in B, \; c \in C.
\eeq
\end{lem}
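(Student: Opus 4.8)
The plan is to reduce the claim to a single condition on the kernel of the projection $\pi_B$ and then to recognise that condition as \eqref{innerB}. Write $N := \ker \pi_B \subset C \ot A$ for the subspace spanned by the generators $c \ot ba - c\,\ff(b) \ot a$, so that $\pf = (C \ot A)/N$. By Lemma \ref{lem:*inv}, the map $*$ of \eqref{*ta} is an anti-multiplicative involution on the twisted tensor product algebra $C \ot^\ta \!A$. Since the multiplication $m^\ta$ already descends to $\pf$ (Proposition \ref{prop:Blin}) and $\pi_B$ is an algebra morphism, the induced map on the quotient is automatically an anti-multiplicative involution the moment it is well defined; thus the only thing to verify is well-definedness. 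As $*$ is additive, this is equivalent to $*(N) \subseteq N$, i.e. to $\pi_B(g^*) = 0$ for every generator $g = c \ot ba - c\,\ff(b) \ot a$.

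First I would compute $g^*$ directly from the defining formula $(c \ot a)^* = \ta(a^* \ot c^*)$. Using that $*_A$ and $*_C$ are anti-multiplicative and that $\ff$ is a $*$-morphism (so $\ff(b)^* = \ff(b^*)$), one finds
\[
g^* = \ta\big((ba)^* \ot c^*\big) - \ta\big(a^* \ot (c\,\ff(b))^*\big) = \ta(a^* b^* \ot c^*) - \ta\big(a^* \ot \ff(b^*)\, c^*\big).
\]
Applying $\pi_B$ and recalling $\tap = \pi_B \circ \ta$ turns $\pi_B(g^*) = 0$ into
\[
\tap(a^* b^* \ot c^*) = \tap\big(a^* \ot \ff(b^*)\, c^*\big), \qquad \forall\, a \in A,\ b \in B,\ c \in C.
\]

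Finally I would use that $*_A$, $*_C$ and the restriction of $*_A$ to the $*$-subalgebra $B$ are bijections, so that $a^*$, $b^*$, $c^*$ run over all of $A$, $B$, $C$; relabelling them as $a$, $b$, $c$ converts the last display into $\tap(ab \ot c) = \tap(a \ot \ff(b)\, c)$, which is exactly \eqref{innerB}. Both implications then follow simultaneously. I do not anticipate a genuine obstacle: the core is a short bookkeeping computation, and the only step requiring care is the separation of concerns made at the outset—namely that once $*$ is well defined on the quotient, its being an anti-multiplicative involution is inherited from $C \ot^\ta \!A$ for free and need not be checked anew on $\pf$.
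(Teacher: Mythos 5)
Your proposal is correct and follows essentially the same route as the paper, which likewise characterises the descent of $*$ by imposing $(c \ot ba)^* = (c\,\ff(b) \ot a)^*$ in $C \otb A$ (equivalently, that $I$ is a $*$-ideal) and reads off \eqref{innerB}. Your version merely spells out the computation and the relabelling via bijectivity of the involutions, both of which are sound.
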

\begin{proof}
Condition \eqref{innerB} is obtained directly by imposing that $(c \ot ba)^*$  equals $(c \, \ff(b) \ot a)^*$ in $C \otb A$, or equivalently that $I$ is a $*$-ideal of $C \ot A$.
\end{proof}
\begin{rem}
The condition \eqref{innerB} is equivalent to $\ta(J) \subseteq I$, with  the ideals $I,J$ given as above.  
When  $J \subset A \ot C$ is an algebra ideal and   $\ta(J) = I$, then 
$\ta$ descends to a well--defined algebra isomorphism 
$
\ta: (A\otb^{\tain} \! C \, , m^{\ta^{-1}})  \longrightarrow (C \otb^\ta  \! A  \,, m^{\ta})
$.
 \end{rem}
 
\subsection{From twisted tensor product algebras to push-forward Hopf--Galois extensions}\label{sec:pf-HG}

We now apply the results of the previous section to the case of $B \subset A$ being a Hopf--Galois extension. As mentioned before, the   push-forward  $C \ot_B A$ of $A$ along an  algebra morphism  $\ff: B \to C$  plays the role of a geometrical pull-back of a principal bundle.
In the framework of  central Hopf--Galois extensions - that is under the hypothesis that the algebra $B$ is central in $A$, and thus in particular commutative, and that the image $\ff(B)$ of $\ff$  is central in $C$ --
it was shown in \cite[Thm. 1.4]{kassel} that the algebra $C \ot_B A$ with the tensor product algebra multiplication is an $H$-Galois extension of $C$. Moreover it is faithfully flat as $C$-module if $A$ is such as a $B$-module. We now prove analogous results in the more general context of twisted tensor product algebras, without centrality assumptions. 

Assume   that the algebra  $A$ is a right $H$-comodule algebra for a Hopf algebra $H$,  with
coaction $\delta: A \to A \ot H$, $a \to \zero{a} \ot  \one{a}$, and $B$ is the subalgebra 
of coinvariant elements, $B=A^{coH}=\{ b \in B \, | \, \delta(b)= b \ot 1_H\}$.  Moreover assume $A$ is an 
$H$-Galois extension of $B$.  
Motivated by the geometric pull-back interpretation, consider $C$ as a trivial $H$-comodule and  $C \ot A$ as a right $H$-comodule via the tensor product coaction 
$\id_C \ot \delta$.  Since $B=A^{coH}$ is the subalgebra of coinvariants, the $H$--comodule structure descends to  $C \ot_B A$, with coaction $\id_C \otb \delta$.  Similarly,  consider the vector space $A \ot C$   as a right $H$-comodule with the tensor product coaction
 $(\id_A \ot  \textup{flip})\circ (\delta \ot \id_C)$. In the following, we fix an algebra map $\ff: B \to C$. 
 \begin{prop}\label{prop:Hcas}
Let $\ta:A\ot C\to C\ot A$  be a   twisting map  such that the projection $\tap$ is a $B$-bimodule morphism as in \eqref{f-lin}. Suppose $\ta$ is an $H$-comodule morphism, that is
$$
  \taa{c} \ot \zero{(\taa{a})} \ot   \one{(\taa{a})} = \taa{c} \ot \taa{(\zero{a})} \ot   \one{a}  \; , \qquad \forall \, a \in A, \, c \in C 
$$
then the 
 twisted push-forward algebra $\pf$ is an $H$-comodule algebra  with coaction  $\id_C \ot_B \delta$.
 \end{prop}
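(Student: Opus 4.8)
The plan is to verify the two requirements for $\pf$ to be a right $H$-comodule algebra with coaction $\rho := \id_C \otb \delta$: first, that $\rho$ is a well-defined coaction on the quotient satisfying the comodule axioms, and second — the substantive point — that it is an algebra morphism for the twisted product $m^\ta$.

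First I would confirm that $\rho$ descends to $C \otb A$. On $C \ot A$ the map $\id_C \ot \delta$ is the tensor product coaction of the trivial comodule $C$ with $A$, hence automatically coassociative and counital. It respects the defining relations of the quotient \eqref{push} precisely because $B = A^{coH}$: for $b \in B$ one has $\delta(b)= b \ot 1_H$, so that $(\id_C \ot \delta)(c \ot ba) = (c \ot b\zero{a}) \ot \one{a}$ and $(\id_C \ot \delta)(c\,\ff(b) \ot a) = (c\,\ff(b) \ot \zero{a}) \ot \one{a}$ are identified after applying $\pi_B \ot \id_H$, using $c \otb b\zero{a} = c\,\ff(b) \otb \zero{a}$. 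Thus $\rho(c \otb a) = (c \otb \zero{a}) \ot \one{a}$ is well-defined, and the comodule axioms are inherited from those on $C \ot A$.

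The main step is to show that $\rho$ is multiplicative, i.e.\ that $\rho\big((c \otb a) \dta (c' \otb a')\big)$ equals the product $\rho(c \otb a) \cdot \rho(c' \otb a')$ computed in $(\pf) \ot H$ (the tensor product algebra, with $m^\ta$ on the first factor and $m_H$ on $H$). Starting from the left-hand side and using the explicit product \eqref{mult-psi}, the coaction $\rho$ produces $(c \taa{c'} \otb \zero{(\taa{a} a')}) \ot \one{(\taa{a} a')}$, where $\ta(a \ot c') = \taa{c'} \ot \taa{a}$. I would then apply two facts in turn: that $\delta$ is an algebra map (as $A$ is an $H$-comodule algebra), which splits $\delta(\taa{a} a')= \zero{(\taa{a})}\zero{a'} \ot \one{(\taa{a})}\one{a'}$; and the $H$-comodule morphism hypothesis on $\ta$, applied to the triple $\taa{c'} \ot \zero{(\taa{a})} \ot \one{(\taa{a})}$ coming from $\ta(a \ot c')$, to rewrite it as $\taa{c'} \ot \taa{(\zero{a})} \ot \one{a}$ coming instead from $\ta(\zero{a} \ot c')$. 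This yields $(c \taa{c'} \otb \taa{(\zero{a})}\zero{a'}) \ot \one{a}\one{a'}$. Computing the right-hand side directly, $\rho(c \otb a)\cdot\rho(c' \otb a') = \big((c \otb \zero{a}) \dta (c' \otb \zero{a'})\big) \ot \one{a}\one{a'}$, and expanding the twisted product with $\ta(\zero{a} \ot c')$ gives exactly the same expression, completing the verification.

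The only delicate point is the middle step: commuting the $A$-coaction $\delta$ past the twisting map $\ta$. This is exactly where the hypothesis that $\ta$ is an $H$-comodule morphism is indispensable; without it the two sides would differ by the failure of $\ta$ to intertwine the coactions. Note that the $B$-bimodule property \eqref{f-lin} of $\tap$ plays no direct role in the multiplicativity computation beyond guaranteeing, via Proposition \ref{prop:Blin}, that $m^\ta$ is defined on the quotient in the first place, so that both sides of the identity genuinely live in $(\pf) \ot H$.
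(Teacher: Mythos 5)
Your proposal is correct and follows essentially the same route as the paper's proof: compute $(\id_C \otb \delta)$ of the twisted product using that $\delta$ is an algebra map, compute the product of the images, and match the two via the $H$-comodule morphism property of $\ta$. The preliminary check that the coaction descends to the quotient (via $B=A^{coH}$) is stated in the paper just before the proposition rather than inside the proof, but is the same observation.
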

\begin{proof}
We have to show that the coaction  $\id_C \otb \delta$ is an algebra map with respect to the twisted product $m^\ta$. 
For $a \in A$, $c \in C$, using that $\delta$ is an algebra map,  we compute
\begin{align*}
(\id_C \otb \delta) \big((c \otb a) \dta (c' \otb a')  \big)
&= (\id_C \otb \delta) \big(c \taa{c'} \otb \taa{a}  a'   \big)
\\
&=  c \taa{c'} \otb \zero{(\taa{a})} \zero{ a' } \otb \one{(\taa{a})} \one { a'}  \,.
\end{align*}
On the other hand (writing again $\dta$, by abuse of notation)
\begin{align*}
\big((\id_C \otb \delta) (c \otb a) \big)  \dta \big( (\id_C \otb \delta)  (c' \otb a')  \big)
&= \big(c \otb \zero{a} \otb \one{a} \big)  \dta \big(  c'  \otb \zero{a'} \otb \one{a'} \big)
\\
&= c \taa{c'} \otb \taa{(\zero{a})} \zero{a'} \otb   \one{a} \one{a'}
\end{align*}
and the two expressions coincide since $\ta$ is an $H$-comodule morphism.
\end{proof}
The algebra $C$ is contained in the  subalgebra of $\pf$ consisting of coinvariant elements, 
 $ C \subset \big( \pf \big)^{coH}$, $c \mapsto c \otb 1_B$.
 With this identification one has 
 \beq\label{}
c' (c \otb a) = c' \tap(1_A \ot c) a \, \, ,  \quad   (c \otb a) c' = c \tap(a \ot c')   \; ,\quad a \in A, \, c,c' \in C
\eeq
and one can form the balanced tensor product of $\pf$ with itself over its subalgebra
 $C$. We make the assumption that the map $\tap= \pi_B \circ \ta$ satisfies (cf. \eqref{nor})
 \beq\label{nor-B} 
\tap  (1_A \ot c)= c \otb 1_A \;, \qquad  \tap (a \ot 1_C)=1_C \otb a.
\eeq
Then  the balanced tensor product of $\pf$ with itself over  $C$ is given by 
$$
 (\pf) \ot_C  (\pf) = (\pf) \ot  (\pf) / \mathcal{I}
$$
with ideal  
 \beq
\mathcal{I} = \big\langle (c \otb a ) \ot (c'' c' \otb a' ) - (c \taa{c''} \otb \taa{a}) \ot  (c' \otb a' )\, ; \quad  
 a,a' \in A, \, c,c', c'' \in C \big\rangle .
 \eeq
Notice that in the quotient, 
\beq\label{cta=ac}
\taa{c} \otb \taa{a} \ot_C  1_C \otb 1_A = 1_C \otb a \ot_C c \otb 1_A \; , \quad \forall a \in A, \, c \in C.
\eeq

 \begin{lem}\label{lem:CAB}
Let  $\ta:A\ot C\to C\ot A$ be a twisting map which is a right $H$-comodule morphism. Assume $\tap$ is a $B$-bimodule morphism which satisfies \eqref{nor-B} and  the distributive law 
$\tap(a \ot cc') = \taa{c}  \tap(\taa{a} \ot c')$ (cf.~\eqref{C2el}).
 Then $ (C \ot_B A) \ot_C  (C \ot_B A)$ and $ C \ot_B (A \ot_B A)$
 are isomorphic as left $C$-modules and right $H$-comodules  via the map
 \beq\label{CAA}
 (C \ot_B A) \ot_C  (C \ot_B A) \to C \ot_B (A \ot_B A) \, , \qquad (c \otb a) \ot_C (c' \otb a') \mapsto c \taa{c'} \otb \taa{a} \otb a' 
 \eeq
with inverse 
 $c \otb (a  \otb a') \mapsto  (c \otb a) \ot_C (1_C  \otb a')$.
  \end{lem}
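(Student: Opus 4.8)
The plan is, writing $\Phi$ for the map \eqref{CAA} and $\Psi$ for the proposed inverse $c \otb (a \otb a') \mapsto (c \otb a)\ot_C(1_C \otb a')$, to check in turn that each is well defined on the appropriate quotient, that $\Phi\circ\Psi = \id$ and $\Psi\circ\Phi=\id$, and finally that $\Phi$ is a morphism of left $C$-modules and of right $H$-comodules. I would realise $\Phi$ as the map induced by the bilinear assignment $\beta\big((c \otb a),(c' \otb a')\big) = c\,\tap(a \ot c')\otb a' = c\taa{c'}\otb\taa a\otb a'$, first checking that $\beta$ descends on each factor $C \otb A$ and then that it is balanced over the subalgebra $C \subset \pf$, where the right $C$-action is $(c \otb a)c' = c\,\tap(a \ot c')$ and the left $C$-action is $c'(c'' \otb a'') = c'c''\otb a''$.

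Well-definedness of $\beta$ in the first argument, $c \otb ba = c\,\ff(b)\otb a$, is immediate from left $B$-linearity $\tap(ba \ot c') = \ff(b)\,\tap(a \ot c')$ of \eqref{f-lin}. In the second argument, $c' \otb ba' = c'\ff(b)\otb a'$, it follows from right $B$-linearity $\tap(a \ot c'\ff(b)) = \tap(a \ot c')\,b$ of \eqref{f-lin} together with the inner balancing $\taa a\otb ba' = \taa a\,b\otb a'$ of $A \otb A$. The decisive step, which uses the distributive-law hypothesis, is that $\beta$ is balanced over $C$: on $(c \otb a)\ot_C(c''c'\otb a')$ the map $\Phi$ gives $c\,\tap(a \ot c''c')\otb a'$, whereas on $\big((c \otb a)c''\big)\ot_C(c'\otb a') = (c\taa{c''}\otb\taa a)\ot_C(c'\otb a')$ it gives $c\taa{c''}\,\tap(\taa a\ot c')\otb a'$, and these coincide exactly by $\tap(a \ot c''c') = \taa{c''}\,\tap(\taa a\ot c')$.

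For $\Psi$ the outer relation $c \otb (ba \otb a') = c\ff(b)\otb(a \otb a')$ is just the first-factor relation of $C \otb A$, while the inner relation $c \otb (ab \otb a') = c \otb (a \otb ba')$ is obtained by moving $\ff(b)\in C$ across $\ot_C$: from $(c \otb a)\ff(b) = c\,\tap(a \ot \ff(b)) = c \otb ab$ (using right $B$-linearity and left normality $\tap(a \ot 1_C) = 1_C \otb a$ of \eqref{nor-B}) and $\ff(b)(1_C \otb a') = 1_C \otb ba'$, balancing over $C$ gives $(c \otb ab)\ot_C(1_C \otb a') = (c \otb a)\ot_C(1_C \otb ba')$. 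The inverse identities are then short: $\Phi\circ\Psi$ returns $c\,\tap(a \ot 1_C)\otb a' = c \otb a \otb a'$ by normality, and $\Psi\circ\Phi$ returns $(c\taa{c'}\otb\taa a)\ot_C(1_C \otb a') = \big((c \otb a)c'\big)\ot_C(1_C \otb a') = (c \otb a)\ot_C(c'\otb a')$ again by balancing over $C$.

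Left $C$-linearity of $\Phi$ is clear, since $C$ acts by multiplication on the leftmost tensorand on both sides. For $H$-equivariance I would compute the two ways around the square: applying first the coaction of $(C \otb A)\ot_C(C \otb A)$ and then $\Phi\ot\id_H$ produces $c\taa{c'}\otb\taa{(\zero a)}\otb\zero{a'}\ot\one a\one{a'}$, while applying first $\Phi$ and then the coaction $\id_C\otb\delta$ of $C \otb(A \otb A)$ produces $c\taa{c'}\otb\zero{(\taa a)}\otb\zero{a'}\ot\one{(\taa a)}\one{a'}$; these agree because $\ta$ is a right $H$-comodule morphism, $\taa c\ot\zero{(\taa a)}\ot\one{(\taa a)} = \taa c\ot\taa{(\zero a)}\ot\one a$, which replaces $\zero{(\taa a)}\ot\one{(\taa a)}$ by $\taa{(\zero a)}\ot\one a$ with $c$, $\zero{a'}$, $\one{a'}$ as spectators. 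The substantive content is thus carried by two hypotheses: the distributive law, which forces compatibility with the balancing over $C$, and the comodule-morphism property, which plays the same role for $H$-equivariance. The remaining difficulty is only the simultaneous bookkeeping of the several quotient relations.
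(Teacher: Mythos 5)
Your proposal is correct and follows essentially the same route as the paper: define the map on the unbalanced tensor product, use the $B$-bimodule property of $\tap$ for well-definedness, the distributive law for balancing over $C$, \eqref{nor-B} and \eqref{cta=ac} for the two inverse identities, and the comodule-morphism property of $\ta$ for $H$-equivariance. You merely spell out a few steps (well-definedness of the inverse, the equivariance computation) that the paper leaves as direct checks.
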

  \begin{proof}  
  The map 
\begin{align*}
\alpha: &(C \ot_B A) \ot  (C \ot_B A) \to C \ot_B (A \ot_B A),
\\
&(c \otb a) \ot (c' \otb a') \; \mapsto \; c \tap(a \ot c') \otb a'= c \taa{c'} \otb \taa{a} \otb a' 
\end{align*}
is well-defined since $\tap= \pi_B \circ \ta$   is a $B$-bimodule morphism, that is it satisfies \eqref{f-lin}. For $c'' \in C$, and property $\tap(a \ot cc') = \taa{c}  \tap(\taa{a} \ot c')$ of the map $\tap$, one has
 \begin{align*}
 \alpha \big( (c \otb a) \ot (c'' c' \otb a') \big) &=  c \taa{(c'' c')} \otb \taa{a} \otb a'  
= c \taa{c'' } \taadue{c'} \otb \taadue{(\taa{a})} \otb a' 
\\
&=  \alpha \big( (c \taa{c''}  \otb \taa{a})\ot  (c' \otb a') \big) .
 \end{align*}
Thus $\alpha$ vanishes on the ideal $\mathcal{I}$ and descends to the well-defined map \eqref{CAA} on the balanced tensor product $(C \otb A) \ot_C  (C \otb A)$.
The map  $C \ot_B (A \ot_B A) \to (C \ot_B A) \ot_C  (C \ot_B A)$, $c \otb (a  \otb a') \mapsto  (c \otb a) \ot_C (1_C  \otb a')$ is well-defined. By direct check one verifies that it provides  a right inverse of $\alpha$, due to property \eqref{nor-B} of $\tap$, and a left inverse, due to identity \eqref{cta=ac} in the quotient.  
Both maps are morphisms of left $C$-modules and right $H$-comodules.
  \end{proof}

  The result of \cite[Thm.1.4]{kassel} is generalised to possibly non central $H$-Galois extensions by the following result.

\begin{thm}\label{thm:pushHG}
Let $B \subset A$ be an $H$-Galois extension with $A$ faithfully flat as a $B$-module. Let  
$\ta:A\ot C\to C\ot A$ be a twisting map with properties as in Lemma \ref{lem:CAB}. Then 
the twisted push-forward algebra $\pf$ of $B \subset A$ along the map $\ff:B \to C$ is a faithfully flat $H$-Galois extension of $C$.
\end{thm}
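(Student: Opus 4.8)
The plan is to verify, for the extension $C \subset P$ with $P := \pf$, the three defining features of a faithfully flat $H$-Galois extension: that $P$ is a right $H$-comodule algebra whose coinvariants are exactly $C$, that the canonical map is bijective, and that $P$ is faithfully flat over $C$. Two of these are essentially in hand. By Proposition \ref{prop:Hcas}, $P$ is a right $H$-comodule algebra with coaction $\id_C \otb \delta$, and $c \mapsto c \otb 1_A$ embeds $C$ into $P^{coH}$. Faithful flatness of $P$ as a left $C$-module is immediate from the hypothesis that $A$ is faithfully flat over $B$, together with the cited result \cite{bour}. What remains is to prove that the canonical (Galois) map $\beta_C \colon P \ot_C P \to P \ot H$, $\beta_C\big((c\otb a)\ot_C(c'\otb a')\big)=c\,\taa{c'}\otb\taa{a}\,\zero{a'}\ot\one{a'}$, is bijective, and to upgrade $C \subset P^{coH}$ to an equality.

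For bijectivity of $\beta_C$ I would factor it through the isomorphism of Lemma \ref{lem:CAB}. Let $\chi \colon A\otb A \to A\ot H$, $\chi(a\otb a')=a\,\zero{a'}\ot\one{a'}$, be the canonical map of the $H$-Galois extension $B\subset A$, which is bijective by hypothesis and left $B$-linear; applying the functor $C\otb(-)$ produces an isomorphism $\id_C\otb\chi \colon C\otb(A\otb A)\to C\otb(A\ot H)$. Composing the isomorphism $\alpha$ of Lemma \ref{lem:CAB} with $\id_C\otb\chi$ and with the obvious identification $\nu\colon C\otb(A\ot H)\xrightarrow{\ \sim\ }(C\otb A)\ot H=P\ot H$, a direct check on generators shows $\beta_C=\nu\circ(\id_C\otb\chi)\circ\alpha$. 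Being a composition of three bijections, $\beta_C$ is bijective. The only inputs are the Galois property of $B\subset A$ and the hypotheses of Lemma \ref{lem:CAB} (normality and the distributive law for $\tap$) that make $\alpha$ an isomorphism of left $C$-modules and right $H$-comodules.

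The remaining, subtler, point is the identity $C=P^{coH}$, which I expect to be the main obstacle: the functor $C\otb(-)$ is only right exact, so it does not detect the coinvariant kernel directly, and the text deliberately records only $C\subset P^{coH}$. Here I would invoke faithful flatness through a descent argument, feeding back the bijectivity just established. Given $p\in P^{coH}$, both $p\ot_C 1_P$ and $1_P\ot_C p$ are carried by $\beta_C$ to $p\ot 1_H$, so injectivity of $\beta_C$ forces $1_P\ot_C p = p\ot_C 1_P$ in $P\ot_C P$. On the other hand, since $P$ is faithfully flat over $C$, faithfully flat descent along $C\hookrightarrow P$ identifies $C$ with the equalizer of the two maps $p\mapsto 1_P\ot_C p$ and $p\mapsto p\ot_C 1_P$ from $P$ to $P\ot_C P$; concretely one tensors the complex $0\to C\to P\to P\ot_C P$ of $C$-bimodules with $P$ on the right to obtain a contractible, hence exact, complex, and reflects exactness back. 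Therefore $p\in C$, giving $P^{coH}\subseteq C$ and hence $P^{coH}=C$. Combining the three parts shows that $C\subset P$ is a faithfully flat $H$-Galois extension.
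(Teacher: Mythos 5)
Your proposal is correct and follows essentially the same route as the paper: faithful flatness of $C\ot_B A$ from \cite[Prop.~5 Ch.I \S 3]{bour}, and bijectivity of the canonical map by factoring it as $\id_C\ot_B\chi$ through the isomorphism of Lemma \ref{lem:CAB}. The only difference is at the final step, where the paper deduces $C=(\pf)^{coH}$ by citing \cite[Lemma 4.2]{tak77} (using only surjectivity of $\chi^\ta$ plus faithful flatness), while you unpack an equivalent faithfully flat descent argument by hand, using injectivity of the canonical map to place any coinvariant in the equalizer of the two unit maps $P\rightrightarrows P\ot_C P$.
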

\begin{proof}  
The left $C$-module $C \ot_B A$ is faithfully flat since  $A$ is faithfully flat as a left $B$-module, \cite[Prop. 5 Ch.I \S 3]{bour}. 
Consider the map 
$$
\chi^\ta:  (\pf) \ot_C  (\pf) \to  (\pf) \ot H\, , \, \, \, 
(c \otb a) \ot_C (c' \otb a') \mapsto (c \otb a)  \dta (c' \ot_B   \zero{a'}) \ot \one{a'}.
$$
With the  isomorphism \eqref{CAA}, the  map $\chi^\ta$   is simply  
 $\chi^\ta= \id_C \otb \chi$, with $\chi: A \otb A \to A \ot H$ the canonical map of the algebra extension $B \subset A$. It follows that $\chi^\ta$ is surjective.
 Since the left $C$-module $C \ot_B A$ is faithfully flat and $\chi^\ta$ is surjective, by \cite[Lemma 4.2]{tak77}  one has $C=(\pf)^{coH}$.  Then  $C \subset \pf $ is an $H$-Galois extension, with canonical map
$\chi^\ta$.
 \end{proof}
 
We give an explicit formula for the translation map of the extension obtained through the  push-forward construction. 
\begin{lem}
Let $\tau: H \to A \ot_B A$, $h \mapsto \tone{h} \ot_B \ttwo{h}$ be the translation map of the   $H$-Galois extension $B \subset A$. 
 Then the  translation map $\tau^\ta= (\chi^\ta)^{-1} \vert_{1_C \otb 1_A \ot H}$ of the $H$-Galois extension $\pf$ is given by
\beq
\tau^\ta: H \to (\pf) \ot_C  (\pf), \quad h \mapsto (1_C \ot_B \tone{h} )\ot_C (1_C \ot_B \ttwo{h}) . \label{tau-pf}
\eeq  
\end{lem}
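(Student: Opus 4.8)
The plan is to invert the canonical map $\chi^\ta$ on the subspace $1_C \otb 1_A \ot H$ by exploiting the factorization already established in the proof of Theorem \ref{thm:pushHG}. There the left $C$-module, right $H$-comodule isomorphism $\alpha$ of \eqref{CAA} (Lemma \ref{lem:CAB}) was used to write $\chi^\ta = (\id_C \otb \chi) \circ \alpha$, where $\chi : A \otb A \to A \ot H$ is the canonical map of the extension $B \subset A$ and the target $(\pf) \ot H$ is identified with $C \otb (A \ot H)$ by reassociation. Since $B \subset A$ is Hopf--Galois, $\chi$ is bijective, and $\alpha$ is an isomorphism, so $\chi^\ta$ is bijective with $(\chi^\ta)^{-1} = \alpha^{-1} \circ (\id_C \otb \chi^{-1})$.

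I would then simply evaluate this inverse on $1_C \otb 1_A \ot h$. Applying $\id_C \otb \chi^{-1}$ and using $\tau = \chi^{-1}|_{1_A \ot H}$, the element $1_C \otb 1_A \ot h$ is sent to $1_C \otb \tau(h) = 1_C \otb (\tone{h} \otb \ttwo{h})$. Applying next the inverse of $\alpha$, which by Lemma \ref{lem:CAB} reads $c \otb (a \otb a') \mapsto (c \otb a) \ot_C (1_C \otb a')$, with $c = 1_C$, $a = \tone{h}$, $a' = \ttwo{h}$ produces $(1_C \otb \tone{h}) \ot_C (1_C \otb \ttwo{h})$, which is precisely \eqref{tau-pf}.

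As a self-contained confirmation --- and to avoid carrying the two identifications through --- I would instead check directly that $\chi^\ta$ sends the right-hand side of \eqref{tau-pf} to $1_C \otb 1_A \ot h$, which suffices because $\chi^\ta$ is a bijection. Computing the twisted product $(1_C \otb \tone{h}) \dta (1_C \otb \zero{(\ttwo{h})}) = 1_C\, \taa{1_C} \otb \taa{(\tone{h})}\, \zero{(\ttwo{h})}$, the left-normality $\tap(\tone{h} \ot 1_C) = 1_C \otb \tone{h}$ from \eqref{nor-B} collapses the first factor to $1_C \otb \tone{h}\, \zero{(\ttwo{h})}$; the defining identity $\tone{h}\, \zero{(\ttwo{h})} \ot \one{(\ttwo{h})} = 1_A \ot h$ of the translation map then yields $1_C \otb 1_A \ot h$.

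There is no genuine obstacle here: the substance of the statement is the bijectivity of $\chi^\ta$, already secured in Theorem \ref{thm:pushHG}, together with the two structural facts just invoked, namely the normality \eqref{nor-B} of $\tap$ and the characterising identity of $\tau$. The only point demanding attention is the bookkeeping of tensor factors when passing through the reassociation $(\pf) \ot H \cong C \otb (A \ot H)$ and the isomorphism $\alpha$, after which the result is immediate.
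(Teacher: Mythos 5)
Your proposal is correct, and it reaches the conclusion by a slightly more economical route than the paper. The paper's proof verifies the two one-sided inverse identities directly: it carries out in detail the computation that $(m^\ta \ot_C \id)\circ(\id \ot \tau^\ta)\circ \chi^\ta$ is the identity on $(\pf)\ot_C(\pf)$ --- which requires the relation \eqref{cta=ac} in the balanced tensor product --- and then remarks that $\chi^\ta(\tau^\ta(h)) = 1_C \otb 1_A \ot h$ follows from the definitions. You instead observe that $\chi^\ta$ is already known to be bijective (from Theorem \ref{thm:pushHG}, or directly from the factorization $\chi^\ta = (\id_C\otb\chi)\circ\alpha$ with $\alpha$ the isomorphism of Lemma \ref{lem:CAB}), so checking the single composite $\chi^\ta\circ\tau^\ta(h) = 1_C\otb 1_A\ot h$ suffices; that check uses only the left normality $\tap(a\ot 1_C) = 1_C\otb a$ from \eqref{nor-B} and the identity $\tone{h}\,\zero{(\ttwo{h})}\ot\one{(\ttwo{h})} = 1_A\ot h$, and dispenses with \eqref{cta=ac} altogether. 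Your first argument --- transporting $\tau = \chi^{-1}|_{1_A\ot H}$ through $\alpha^{-1}$ --- is also valid and makes the formula \eqref{tau-pf} transparent rather than something to be verified. Both of your variants are sound; the only care needed is the one you flag, namely the bookkeeping through the identification $(\pf)\ot H \cong C\otb(A\ot H)$, which is harmless since $\alpha$ is a left $C$-module and right $H$-comodule isomorphism.
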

 \begin{proof} 
 We compute:
 \begin{align*}
(m^\ta \ot_C \id_{C \ot_B A})& (\id_{C \ot_B A} \ot \tau^\ta)  \chi^\ta \Big( (c \otb a) \ot_C (c' \otb a') \Big) 
\\
&
=  (m^\ta \ot_C \id_{C \ot_B A})\Big(  c \taa{c'} \otb   \taa{a}  \zero{a'} \ot \tau^\ta (\one{a'})  \Big)  
\\
&
=  \Big(  c \taa{c'} \otb   \taa{a}  \zero{a'}  \Big) \dta  \Big(  1_C  \otb   \tone{\one{a'}}    \Big)  \ot_C  \Big(  1_C  \otb   \ttwo{\one{a'}}    \Big) 
\\
&
=  \Big(  c \taa{c'} \otb   \taa{a}  \zero{a'}     \tone{\one{a'}}    \Big)  \ot_C  \Big(  1_C  \otb   \ttwo{\one{a'}}    \Big) 
\\
\intertext{being $\tap$ normal. 
Due to \eqref{cta=ac}, this is equal to}
&
= c \otb a \zero{a'}     \tone{\one{a'}} \ot_C c' \otb \ttwo{\one{a'}} 
\\
&
=c \otb a\ot_C c' \otb a'.
\end{align*}
with the identity $\zero{a} \tone{\one{a}} \ot_B \ttwo{\one{a}} = 1 \ot_B a$ for $a \in A$ (from $\tau$ being the translation map of the extension $B \subset A$).
The other identity, $\chi^\ta \left( \tau^\ta (h) \right) = 1_c \otb 1_A \ot h$, follows from the definition of the two maps and the analogous identity $\chi \left( \tau (h) \right) = 1 \ot h$ for the maps $\chi, \, \tau$ of the Galois extension $B \subset A$. 
\end{proof}

 \begin{lem}\label{lem:cleft}
In the hypothesis of  Theorem \ref{thm:pushHG}, if the $H$-Galois extension $B \subset A$ is cleft, respectively trivial, then the extension $C \subset \pf$ is cleft, respectively trivial.
 \end{lem}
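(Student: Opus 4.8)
The plan is to transport the cleaving map of $B \subset A$ directly to $\pf$, the only subtlety being that the twisted product must collapse to the ordinary product of $A$ on the relevant elements. Recall that the $H$-Galois extension $B \subset A$ is \emph{cleft} precisely when there is a convolution-invertible right $H$-comodule map $j \colon H \to A$, with convolution inverse $j^{-1}$ (normalised so that $j(1_H)=1_A$), and it is \emph{trivial} when $j$ can moreover be chosen to be an algebra map. My candidate cleaving map for $C \subset \pf$ is
\[
j^\ta \colon H \to \pf, \qquad j^\ta(h) = 1_C \otb j(h),
\]
with candidate convolution inverse $h \mapsto 1_C \otb j^{-1}(h)$.

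First I would verify that $j^\ta$ is a right $H$-comodule map. Since the coaction on $\pf$ is $\id_C \otb \delta$ by Proposition \ref{prop:Hcas}, and $j$ is an $H$-comodule map so that $\delta(j(h)) = j(\one{h}) \ot \two{h}$, one gets $(\id_C \otb \delta)(1_C \otb j(h)) = 1_C \otb j(\one{h}) \ot \two{h} = j^\ta(\one{h}) \ot \two{h}$, as required. The heart of the argument is then convolution invertibility, and here the normality assumption \eqref{nor-B} does all the work. Indeed, the product in $\pf$ of two elements supported on $1_C$ in the left leg reads, using $\tap(a \ot 1_C)= 1_C \otb a$,
\[
(1_C \otb a)\dta (1_C \otb a') = 1_C \cdot \tap(a \ot 1_C) \cdot a' = 1_C \otb a a',
\]
so the twisting disappears and the $A$-legs simply multiply. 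Consequently
\[
(j^\ta * (j^\ta)^{-1})(h) = (1_C \otb j(\one{h})) \dta (1_C \otb j^{-1}(\two{h})) = 1_C \otb j(\one{h}) j^{-1}(\two{h}) = \varepsilon(h)\, 1_C \otb 1_A,
\]
and symmetrically for $(j^\ta)^{-1} * j^\ta$, so $j^\ta$ is a convolution-invertible $H$-comodule map and $C \subset \pf$ is cleft.

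For the trivial case, if $j$ is in addition an algebra map then the same collapse of the twist gives $j^\ta(h)\dta j^\ta(h') = 1_C \otb j(h)j(h') = 1_C \otb j(hh') = j^\ta(hh')$ together with $j^\ta(1_H)=1_C \otb 1_A = 1_{\pf}$, so $j^\ta$ is an algebra map and $C \subset \pf$ is trivial. The main obstacle is therefore not computational but conceptual: one must observe that every product entering the verification is between elements whose $C$-leg is $1_C$, so that left-normality of $\tap$ in \eqref{nor-B} reduces the twisted multiplication to ordinary multiplication in $A$; once this is recorded, both the cleft and the trivial statements follow at once, and the convolution inverse of $j^\ta$ is the image $1_C \otb j^{-1}$ of the convolution inverse of $j$ (equal to $1_C \otb j(S(\cdot))$ in the trivial case).
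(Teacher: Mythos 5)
Your proposal is correct and follows essentially the same route as the paper: the paper's cleaving map is $\pi_B \circ u_A \circ \gamma$, i.e.\ $h \mapsto 1_C \otb \gamma(h)$, with convolution inverse obtained from $\bar\gamma$ in the same way, and the paper likewise invokes the normality condition \eqref{nor-B} to collapse the twisted product on elements with $C$-leg equal to $1_C$. You have merely written out explicitly the computation the paper leaves implicit, which is fine.
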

 \begin{proof}
 Recall that an $H$-extension is cleft provided there exists a cleaving map: a convolution invertible $H$-comodule morphism $\gamma: H \to A$, for $H$ with coaction given by its coproduct. If $\gamma$ is also an algebra map, the extension is said to be trivial. 
 Suppose that $B \subset A$ is cleft with cleaving map $\gamma: H \to A$. 
 Then the map   $\pi_B \circ u_A  \circ \gamma $ is a cleaving map for $C \subset \pf$, where  
 $u_A$ is the algebra morphism $u_A  : A \to C \ot^\ta \!A \, , \, a \mapsto 1_C \ot a$, see \eqref{projpi_alg}.
Indeed, it is an $H$-comodule morphism being the composition of two such maps. Moreover, it is convolution invertible with inverse $u_A \circ \bar{\gamma}$, for $\bar{\gamma}$ the convolution inverse of $\gamma$, as  shown by using that $\psi$ 
satisfies \eqref{nor-B}.
In particular if $\gamma$ is an algebra map, giving $B \subset A$ as a trivial extension, the map $u_A \circ  \gamma$ is an algebra map as well and $C \subset \pf$ is trivial.
 \end{proof}
 \begin{rem}  
 The paper \cite{Dur-un} presents a noncommutative construction of classifying spaces for compact matrix quantum groups and introduces a quantum analogue of the classical classifying map. 
 The approach there relies on a generalized crossproduct construction, akin to the one used in our paper, applied   to  unital $*$-algebras generated by elements $\psi_{ki}$
with the sole relations  $\sum_k \psi_{ki}^* \psi_{kj} = \delta_{ij}$. 
  \end{rem} 

\section{Examples}\label{sec:cases}

In this section we analyse some particular cases of push-forward of Hopf--Galois extensions $B \subset A$: the case of Galois objects (\S \ref{sec:Gobjects}) and those of push-forwards along the (injective) algebra inclusion $i: B \to A$ (\S \ref{sec:pf-tot}) and along a (surjective) projection map $\pi: B \to B/I$ to a quotient algebra (\S \ref{sec:quotient}).

\subsection{The push-forward of Galois objects}\label{sec:Gobjects}
In the case of an $H$-Galois object, $B=\kk$, for any unital algebra $C$, the  only  possible algebra map $\ff: \kk \to C$ is the unit $\eta_C$ of $C$. For any twisting map $\ta: A \ot C \to C \ot A$, the compatibility condition \eqref{f-lin} between $\ta$ and  $\eta_C$ is automatically satisfied, as it  simply corresponds to  the  linearity of $\ta$. Moreover, 
 $\tap=\ta$.  Then the requirements on the map $\tap$ in  Lemma \ref{lem:CAB}, and subsequently in Theorem \ref{thm:pushHG}, reduce to 
the twisting map $\ta$ being normal and satisfying the distributive law \eqref{C2}. The latter property is redundant as it already follows from the normality condition for the twisting map $\ta$ (which also implies \eqref{C1}). 

As a consequence of Theorem \ref{thm:pushHG} we have the following result.

\begin{cor}\label{cor:obj-G}
Let $A$ be an $H$-Galois object with $A$ faithfully flat as a $\kk$-module. For any normal
 twisting map $\ta:A\ot C\to C\ot A$  which is   an $H$-comodule morphism,  the twisted tensor product algebra $C \ot^\ta \!A$ is a faithfully flat $H$-Galois extension of $C$.
\end{cor}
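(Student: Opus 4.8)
The plan is to obtain this statement as a direct specialization of Theorem \ref{thm:pushHG} to the case $B=\kk$, so that the work reduces to checking that every hypothesis of that theorem (routed through Lemma \ref{lem:CAB}) degenerates precisely to the two assumptions stated here, namely that $\ta$ is a normal twisting map and an $H$-comodule morphism. First I would record the structural simplification: when $B=\kk$, the balanced tensor product over $B$ is ordinary tensoring over the base field, so $C \ot_B A = C \ot A$ as vector spaces and the quotient map $\pi_B$ is the identity. Consequently $\tap = \pi_B \circ \ta = \ta$, the twisted push-forward algebra $\pf$ coincides with the twisted tensor product algebra $C \ot^\ta \!A$, and the only admissible algebra map $\ff:\kk \to C$ is the unit $\eta_C$.

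Next I would verify the hypotheses of Lemma \ref{lem:CAB} in turn, noting that each collapses to something automatic. The $B$-bimodule morphism condition \eqref{f-lin} on $\tap=\ta$ reduces to the $\kk$-linearity of $\ta$, which holds trivially. Since $\tap=\ta$ and $\ta$ is normal, the normalization condition \eqref{nor-B} is exactly the normality \eqref{nor}. Finally, the distributive law $\tap(a \ot cc') = \taa{c}\,\tap(\taa{a} \ot c')$ required in Lemma \ref{lem:CAB} is just \eqref{C2el} for $\ta$; this holds because a twisting map satisfies \eqref{Oeq} by definition, and \eqref{Oeq} together with right normality implies \eqref{C2}, as recorded in \S\ref{sec:twist}. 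Thus the distributive law is not an extra assumption but a consequence of normality, and together with the hypothesis that $\ta$ is a right $H$-comodule morphism this shows that $\ta$ enjoys exactly the properties demanded in Lemma \ref{lem:CAB}.

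With these checks in place I would invoke Theorem \ref{thm:pushHG}: here $B=\kk \subset A$ is an $H$-Galois extension, since $A$ is an $H$-Galois object, and $A$ is faithfully flat over $\kk$ by hypothesis, so the twisted push-forward algebra $\pf = C \ot^\ta \!A$ is a faithfully flat $H$-Galois extension of $C$, as claimed. I do not expect a genuine obstacle in this argument; the statement is a clean corollary rather than a new construction. The only point meriting care is the identification $\tap=\ta$ and $\pf = C \ot^\ta \!A$, which removes the quotient entirely, together with the observation that normality of $\ta$ absorbs the distributive-law requirement of Lemma \ref{lem:CAB}.
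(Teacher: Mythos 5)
Your proposal is correct and follows essentially the same route as the paper: the Corollary is obtained by specializing Theorem \ref{thm:pushHG} to $B=\kk$, observing that $\tap=\ta$, that condition \eqref{f-lin} reduces to linearity, that \eqref{nor-B} is exactly normality, and that the distributive law \eqref{C2} is already implied by \eqref{Oeq} together with normality. No gaps.
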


We are in the context, treated in \cite{kassel}, of push-forwards of central Galois extensions. Here $\psi$ could be more general that simply the $\textup{flip}$ map, thus leading to a multiplication on $C\ot A$ different from the usual tensor product multiplication.

\begin{lem}\label{lem:double}
Let $\{C, A\}$ be a matched pair of bialgebras (see Example \ref{ex:smash}). Let $A$ be an $H$-Galois object, with coaction $\delta: A \to A \ot H$,  and $A$ faithfully flat as a $\kk$-module.  Suppose the comultiplication $\Delta_A$
of $A$ is an $H$-comodule morphism, for $A \ot A$  with coaction $\id \ot \delta$, and that the action $\triangleleft$ of $C$ on $A$ is an $H$-comodule morphism, for $A \ot C$ with tensor coaction $\delta^\ot= \id \ot \delta$   as before. Then 
 the double crossed product algebra $C \bowtie A$ is a faithfully flat $H$-Galois extension of $C$.
\end{lem}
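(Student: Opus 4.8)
The plan is to recognise the double crossed product $C \bowtie A$ as a twisted tensor product and then invoke Corollary \ref{cor:obj-G}. By Example \ref{ex:smash}, the algebra underlying $C \bowtie A$ is exactly $C \ot^\ta \!A$ for the twisting map $\ta(a \ot c)=(\one{a} \triangleright \one{c}) \ot (\two{a} \triangleleft \two{c})$ of \eqref{ta-double}, and this $\ta$ is a genuine twisting map precisely because the double crossed product is associative. Since $A$ is an $H$-Galois object we are in the situation of \S\ref{sec:Gobjects}: here $B=\kk$, the structure map is $\ff=\eta_C$, the compatibility \eqref{f-lin} is automatic and $\tap=\ta$. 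Consequently Corollary \ref{cor:obj-G} delivers the statement as soon as I check that $\ta$ is (i)~normal and (ii)~an $H$-comodule morphism in the sense of Proposition \ref{prop:Hcas}; faithful flatness is inherited from that of $A$ over $\kk$.

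For (i), I would evaluate the two normality conditions \eqref{nor} directly from the matched-pair axioms. Using $\Delta(1_A)=1_A \ot 1_A$, the module unit law $1_A \triangleright \one{c}=\one{c}$ and $1_A \triangleleft \two{c}=\varepsilon_C(\two{c})1_A$, the counit identity collapses $\ta(1_A \ot c)$ to $c \ot 1_A$; symmetrically, from $\Delta(1_C)=1_C \ot 1_C$, $\one{a} \triangleright 1_C=\varepsilon_A(\one{a})1_C$ and $\two{a} \triangleleft 1_C=\two{a}$ one gets $\ta(a \ot 1_C)=1_C \ot a$. Thus $\ta$ is normal, and being an associative normal twisting map it then automatically satisfies the distributive laws \eqref{C1} and \eqref{C2}.

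The heart of the proof is (ii). I would first rewrite the two hypotheses as Sweedler identities: that $\triangleleft$ is an $H$-comodule morphism reads $\delta(a \triangleleft c)=(\zero{a}\triangleleft c)\ot \one{a}$, and that $\Delta_A$ is an $H$-comodule morphism reads $(\id_A \ot \delta)\circ \Delta_A=(\Delta_A \ot \id_H)\circ \delta$. The condition of Proposition \ref{prop:Hcas}, specialised to this $\ta$, is the equality
\begin{equation*}
(\one{a}\triangleright \one{c}) \ot \delta(\two{a}\triangleleft \two{c}) = (\one{(\zero{a})}\triangleright \one{c}) \ot (\two{(\zero{a})}\triangleleft \two{c}) \ot \one{a}.
\end{equation*}
The plan is to apply the first identity to $\two{a}\triangleleft \two{c}$, which moves the coaction onto the left tensor factor, and then to apply the second identity to transport the coaction across the coproduct $\Delta_A$, turning the left-hand side into the right-hand side. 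A structural point worth recording is that the $\triangleright$-component lands in $C$, a trivial $H$-comodule, so it is never acted upon and is merely carried along --- this is exactly why no hypothesis on the action $\triangleright$ is needed. The main obstacle is the bookkeeping of the two independent families of Sweedler legs (those of $\Delta_A$ and those of $\delta$, both written with subscripts $(1),(2)$) when the hypotheses are combined; once the indices are correctly aligned the two sides agree term by term, completing the verification and hence the proof via Corollary \ref{cor:obj-G}.
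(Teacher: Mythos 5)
Your proposal is correct and follows essentially the same route as the paper: reduce to Corollary \ref{cor:obj-G} and verify that the twisting map \eqref{ta-double} is normal and an $H$-comodule morphism, the latter by combining the comodule-morphism hypotheses on $\Delta_A$ and on $\triangleleft$ exactly as in the paper's computation. The only difference is that you spell out the normality check from the matched-pair axioms, which the paper leaves implicit.
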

\begin{proof}
In view of Corollary \ref{cor:obj-G}, we only have to show that $\ta$ in \eqref{ta-double} is an $H$-comodule morphism.
Under the hypothesis that the comultiplication $\Delta_A$ is an 
$H$-comodule morphism,  
$$
\one{(\zero{a})} \ot \two{(\zero{a})} \ot \one{a} = \one{a} \ot \zero{(\two{a})} \ot \one{(\two{a})} \, , \; \forall a \in A
$$
and that the action $\triangleleft$ is an $H$-comodule morphism,  
$$
\delta(a \triangleleft c)= (\zero{a} \triangleleft c)\ot \one{a} \, ,   \; \forall a \in A,\, c \in C,
$$
for the twisting map $\ta$ in \eqref{ta-double}  we compute,
\begin{align*}
\delta^\ot \left( \ta(a \ot c) \right) 
&= (\one{a} \triangleright \one{c}) \ot \delta(\two{a} \triangleleft \two{c})
= (\one{a} \triangleright \one{c}) \ot (\zero{(\two{a})} \triangleleft \two{c})\ot \one{(\two{a})}
\\
&= (\one{(\zero{a})} \triangleright \one{c}) \ot ( \two{(\zero{a})}  \triangleleft \two{c})\ot \one{a}
= \ta(\zero{a} \ot c) \ot \one{a}
\\
&= (\ta \ot \id) \, \delta^\ot(a \ot c).
\end{align*}
Thus $\ta$ is an $H$-comodule morphism.
\end{proof}

\begin{rem}
Twisted tensor product algebras are studied as Galois objects for bicrossed product Hopf algebras in \cite[\S 4]{BiGa24}.
Given an $H$-Galois object $A$  and a $U$-Galois object $R$, for $\{H, U\}$ a matched pair of Hopf algebras, a twisted tensor product algebra $A \ot^{\ta} \!R$ is shown to be a Galois object for the bicrossed product Hopf algebra $H \bowtie U$.  
\end{rem}

\begin{ex} 
Let $q$ be a  primitive $n$-th root of unity contained in $\kk$, with $n \ge 2$.  
The \emph{Taft algebra} $T_n(q)$ is the $n^2$-dimensional Hopf algebra generated by two elements $g$ and $x$ subject to the relations
$$
g^n = 1, \quad x^n = 0, \quad xg = q\,gx.
$$
It has comultiplication, counit, and antipode defined by
$$
\Delta(g) = g \otimes g, \quad \varepsilon(g) = 1, \quad S(g) = g^{-1},
$$
$$
\Delta(x) = 1 \otimes x + x \otimes g, \quad \varepsilon(x) = 0, \quad S(x) = -x\,g^{-1} .
$$
For each  $s \in \kk$, define $A_s$ to be the  algebra  generated by elements $G$ and $X$ subject to
\beq\label{com-taft}
G^n = 1, \quad X^n = s, \quad X G = q\,G X.
\eeq
Then $A_s$ is  a right $T_n(q)$-comodule algebra with coaction $\delta: A_s \to A_s \otimes T_n(q)$ defined on generators by
$$
\delta(G) = G \otimes g, \quad \delta(X) = X \otimes g + 1 \otimes x
$$
with $\kk$ the subalgebra of coinvariants.  The algebra $A_s$ is a Galois object, \cite{sch04}.  
A vector space basis of $A_s$  is given by  the elements $G^a X^b$ with $a,b$ integers, $0 \leq a,b < n$.
The coaction    on these basis elements  is  
$$
\delta(G^a X^b ) = \sum_{k=0}^{b} \qbin{b}{k}_q \, G^a X^k  \otimes g^{\,a+k} x^{\,b-k} ,
$$
where $\qbin{b}{k}_q$ denotes the $q$-binomial coefficient, as can be shown by induction.

Let $C$ be an algebra and suppose there exists a  normal twisting map $\ta: A_s \ot C \to C \ot A_s$. 
This can be specified on the algebra generators of $A_s$ and then extended by requiring \eqref{C1el}, \eqref{C2el} are satisfied and compatible with the relations \eqref{com-taft}. The most generic linear map $\ta: A_s \ot C \to C \ot A_s$ is of the form
$$
\ta(G \ot c):= \sum_{a,b} c_{ab} \ot G^a X^b\, , \quad  \ta(X \ot c):= \sum_{a,b} d_{ab} \ot G^a X^b \, , 
$$
on the algebra generators, where  $c_{ab}=c_{ab}(c)$ and $d_{ab}= d_{ab}(c)$ both belonging to $C$. 
In order for this map to be an $H$-comodule morphism, we must have 
$c_{ab}=0$  for $a>1$ or $b\geq1$ and $d_{ab}=0$  for $a>1$ or $b>1$. Moreover $c_{00}=0$ and $d_{00}=d_{11}=0$, $d_{01}=c$.
Thus $\ta$ is determined by two linear maps $\alpha, \beta : C \to C$ as
$$
 \ta(G \ot c)= \alpha(c) \ot   G \, , \quad  \ta(X \ot c)= c  \ot X + \beta(c) \ot G \, , \quad  
$$
with $\alpha(1)=1$ and $\beta(1)=0$ in order for $\ta$ to be normal.
It is extended by requiring \eqref{C1el}, \eqref{C2el} are satisfied, so to get a twisting map, thus in particular 
$$
\ta(G^m \ot c) = \alpha^m(c)  \ot G^m \, , \quad
\ta(X^m \ot c) = c \ot X^m + \sum_{k=0}^{m-1} \qbin{m}{k}_{q} \, \beta^{m-k} (c) \ot  G^{m-k} X^k$$
(where $\alpha^m$ and $\beta^m$ are the $m$-th iterations of the maps) 
for each $m \in \mathbb{N}$, together with
$$
\ta(GX \ot c)=   \alpha(c)  \ot G X  + \alpha(\beta(c)) \ot G^2 \, , \quad 
\ta(XG \ot c)= \alpha(c)  \ot X G + \beta (\alpha(c)) \ot G^2. 
$$
The compatibility with  \eqref{com-taft} then requires 
$$
\alpha^n  = \id \, , \quad q\, \alpha \circ \beta  = \beta \circ \alpha  \, , \quad \beta^{n-k}=0 \, , \; \forall k=0, \dots, n-1.
$$
Summing up we have that normal twisting maps $\ta: A_s \ot C \to C \ot A_s$ which are $H$-comodule morphisms are in bijection with unital algebra maps $\alpha : C \to C$ such that $\alpha^n  = \id$:
$$
\ta (G^a X^b \ot c ) = \alpha^a (c) \ot G^a X^b .
$$
This leads to the algebra $C \ot^\ta \!A_s$ with product
$$
(c' \ot G^{a_1} X^{b_1}) \cdot^\ta (c \ot G^{a_2} X^{b_2}) = q^{b_1 a_2 }c' \alpha^{a_1} (c) \ot G^{a_1 + a_2 } X^{b_1 + b_2}
$$
and the   $T_n(q)$--Galois extension $C \subset C \ot A_s$.

 Examples of such a map $\alpha$ are $\alpha_{(m)}: c \mapsto q^m c$, $m \in \mathbb{N}$, for any algebra $C$, or
$\alpha$ a suitable permutation of generators $c_j$ for $C=\kk[c_1, \dots , c_n]$. 
For   $A$ being the Hopf algebra $T_n(q)$, examples come from  twisting maps as in \eqref{ta-double} and double crossed products  $C \bowtie T_n(q)$  for which the action $\triangleleft$ of $C$ on $T_n(q)$ is an $H$-comodule morphism (Lemma  \ref{lem:double}).
The particular case of Hopf algebras which factor through  two Taft algebras
is studied in \cite{Ag18} where the corresponding bicrossed products are classified. 
 \end{ex}

\subsection{The push-forward to the total algebra}\label{sec:pf-tot}

In parallel with the classical geometric result that the  pull-back of a principal bundle to the total space  via the bundle projection   is trivial, we construct the push-forward $A \otb^{\ta_\ell} \!A$ of a faithfully flat Hopf--Galois extension $B \subset A$ to the total space algebra along the inclusion $i : B \to A$ and show it is trivial. Here the twisting map $\ta_\ell$ is constructed using the strong connection $\ell$ of the extension.

Let $B\subset A$ be a faithfully flat $H$-Galois extension, for a Hopf algebra $H$ with bijective antipode and 
coaction $\delta: A \to A \ot H$, $\delta(a) = \zero{a} \ot \one{a}$.  Then, from \cite[Thm. 5.6]{scsc}, there always exists a strong connection in the sense of \cite{dgh}. 
This can be given as a linear map
$\ell: H \to A \ot A$ with the following properties (see \cite[Lemma 2.3]{BH04}),
\beq\label{ell} 
\ell(1) =1 \ot 1 , \quad 
\pi_B \circ \ell = \tau , \quad
(\id_A \ot \delta)\circ \ell = (\ell \ot \id_H )\circ \Delta  , \quad
(\delta_l \ot \id_A)\circ \ell  =(\id_H \ot \ell)\circ \Delta .
\eeq
Here $\tau= \chi^{-1} \vert_{1 \ot H}: H \to A \ot_B A$ is the translation map of the extension, 
$\pi_B : A \ot A \to A \ot_B A$ the canonical projection and $\delta_l:A \to H \ot A, \; a \mapsto S^{-1} (\one{a}) \ot \zero{a}$ the induced left coaction of $H$ on $A$.  The map $\ell$ is a lift to $A \ot A$ of the translation map of the extension.
We use the notation  $\ell(h)= \lone{h} \ot \ltwo{h}$ and $\tau(h)= \tone{h} \otb \ttwo{h}$  for the images of an element $h \in H$. Then the last two identities in \eqref{ell} read
\begin{align}
\label{prop-ell0}
\lone{h} \ot \zero{ \ltwo{h}} \ot \one{\ltwo{h}} = \lone{\one{h}} \ot  \ltwo{\one{h}} \ot \two{h} \in A \ot A \ot H
\\
\label{prop-ell-S}
S^{-1} (\one{\lone{h}} ) \ot \zero{\lone{h}} \ot \ltwo{h} = \one{h} \ot \lone{\two{h}} \ot  \ltwo{\two{h}}  \in H \ot A \ot A .
\end{align}

Using a strong connection $\ell$ we construct a twisted tensor product on $A \ot A$ whose associativity 
requires a condition on the map $\ell$. However this is not restrictive for the associativity of the product in the push-forward quotient algebra 
$A \otb^{\ta_\ell} A$ (see Remark~\ref{rem-strong}).
 
\begin{prop}\label{prop:ta-ell}
Let $H$ be a Hopf algebra with bijective antipode and $B\subset A$ a faithfully flat $H$-Galois extension. Assume it admits a strong connection $ \ell$ such that 
 \beq\label{prop-ell}
 \ell(k h)=\lone{h} \lone{k}\ot \ltwo{k} \ltwo{h} ,\quad  \forall \, h,k \in H.
 \eeq
Then the map 
\beq\label{psiell}
\ta_\ell: A \ot A \to A \ot A \, , \quad a \ot c \mapsto \zero{a} c \lone{\one{a}} \ot \ltwo{\one{a}}
\eeq
is a twisting map. 
\end{prop}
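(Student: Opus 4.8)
The plan is to verify that $\ta_\ell$ satisfies the two distributive laws \eqref{C1el} and \eqref{C2el}; since \eqref{C1} and \eqref{C2} together imply the associativity condition \eqref{Oeq} (the sufficient condition recorded in \S\ref{sec:twist}), this is exactly the statement that $\ta_\ell$ is a twisting map. Writing $\ta_\ell(a \ot c) = \taa{c} \ot \taa{a}$ with $\taa{c} = \zero{a}\,c\,\lone{\one{a}}$ and $\taa{a} = \ltwo{\one{a}}$, the two laws are checked by direct Sweedler-index computations, each drawing on a different part of the strong-connection data \eqref{ell} and on the multiplicativity hypothesis \eqref{prop-ell}.

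For \eqref{C1el} I would expand the left-hand side $\ta_\ell(aa' \ot c)$ using that the coaction $\delta$ is an algebra map, $\delta(aa') = \zero{a}\zero{a'} \ot \one{a}\one{a'}$, and then apply the hypothesis \eqref{prop-ell} to $\ell(\one{a}\one{a'})$, which splits it as $\lone{\one{a'}}\lone{\one{a}} \ot \ltwo{\one{a}}\ltwo{\one{a'}}$. The right-hand side $\taadue{\taa{c}} \ot \taadue{a}\taa{a'}$ is produced by applying $\ta_\ell$ twice. Both sides reduce to $\zero{a}\zero{a'}\,c\,\lone{\one{a'}}\lone{\one{a}} \ot \ltwo{\one{a}}\ltwo{\one{a'}}$, so \eqref{C1} holds; the only inputs are the algebra-map property of $\delta$ and assumption \eqref{prop-ell}.

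The law \eqref{C2el} is the more delicate one and I expect it to be the main obstacle, since its right-hand side $\taa{c}\taadue{c'} \ot \taadue{\taa{a}}$ requires a second application of $\ta_\ell$ to $\taa{a} = \ltwo{\one{a}}$, hence the coaction $\delta(\ltwo{\one{a}})$ must be computed. This is precisely where the comodule compatibility \eqref{prop-ell0} of the strong connection enters: it rewrites $\lone{\one{a}} \ot \delta(\ltwo{\one{a}})$ in terms of the iterated coaction of $a$, converting the nested indices into $\lone{\one{a}} \ot \ltwo{\one{a}} \ot \two{a}$. After this substitution the product $\lone{\one{a}}\ltwo{\one{a}}$ appears and collapses through $\lone{h}\ltwo{h} = \varepsilon(h)\,1_A$, the identity $m_A \circ \ell = \eta_A \circ \varepsilon$, which follows from $\pi_B \circ \ell = \tau$ in \eqref{ell} together with the translation-map property $\tone{h}\ttwo{h} = \varepsilon(h)\,1_A$. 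A final use of the counit then identifies the outcome with $\ta_\ell(a \ot cc') = \zero{a}\,cc'\,\lone{\one{a}} \ot \ltwo{\one{a}}$, establishing \eqref{C2}. The care required throughout is in tracking the iterated-coaction indices when applying \eqref{prop-ell0}; once both \eqref{C1} and \eqref{C2} are secured, \eqref{Oeq} is immediate and $\ta_\ell$ is a twisting map.
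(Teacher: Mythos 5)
Your proposal is correct, and every step you sketch does go through: the verification of \eqref{C1el} reduces, via the algebra-map property of $\delta$ and the hypothesis \eqref{prop-ell} applied to $\ell(\one{a}\one{a'})$, to the common expression $\zero{a}\zero{a'}\,c\,\lone{\one{a'}}\lone{\one{a}} \ot \ltwo{\one{a}}\ltwo{\one{a'}}$; and the verification of \eqref{C2el} works exactly as you describe, with \eqref{prop-ell0} converting the nested indices and the identity $m_A\circ\ell=\eta_A\circ\varepsilon$ (which indeed follows from $\pi_B\circ\ell=\tau$ and $\tone{h}\ttwo{h}=\varepsilon(h)1_A$) collapsing the adjacent factors $\lone{(\one{a})_{(1)}}\ltwo{(\one{a})_{(1)}}$. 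The paper takes a different, more direct route: it verifies the associativity condition \eqref{Oeq} itself by computing both sides of that single identity, using the algebra-map property of $\delta$, the identity $\lone{h}\,\zero{\ltwo{h}}\ot\one{\ltwo{h}}=1\ot h$, and invoking \eqref{prop-ell} only at the very last step to match the two sides. Your decomposition proves more — the pair \eqref{C1}, \eqref{C2} is strictly stronger than \eqref{Oeq} — and it cleanly isolates which hypothesis is responsible for which law (\eqref{prop-ell} only for \eqref{C1}; the comodule compatibility \eqref{prop-ell0} and $m_A\circ\ell=\eta_A\circ\varepsilon$ only for \eqref{C2}). In fact the paper needs both distributive laws anyway and establishes them in the remarks immediately following the proposition (noting that \eqref{C2el} follows from right normality together with \eqref{Oeq}, and checking \eqref{C1el} by hand), so your argument packages the proposition and those remarks into a single proof. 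What the paper's approach buys is a single computation in place of two; what yours buys is the stronger conclusion and a sharper accounting of the hypotheses.
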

\begin{proof}
To prove that the corresponding product $m^{\ta_\ell}$ is associative we show that condition \eqref{Oeq} is satisfied. For each $a, c, a', c' \in A$, for the left hand side of \eqref{Oeq}
we compute
\begin{align*}
  (\id \ot m)&\circ (\ta_\ell \ot \id)\circ (\id  \ot m \ot \id)	\circ (\id \ot \id  \ot \ta_\ell) 
(a \ot c \ot a' \ot c') 
\\
&=(\id \ot m)\circ(\ta_\ell \ot \id)(a \ot c     \zero{a'} c' \lone{\one{a'}} \ot \ltwo{\one{a'}}) 
\\
&= \zero{a} c     \zero{a'} c' \lone{\one{a'}}  \lone{\one{a}} \ot \ltwo{\one{a}}  \ltwo{\one{a'}}.
\end{align*}
For the right hand side of \eqref{Oeq} we compute 
\begin{align*}
(m  \ot \id  )&\circ(\id  \ot \ta_\ell)\circ(\id  \ot m  \ot \id)\circ(\ta_\ell \ot \id \ot \id)
(a \ot c \ot a' \ot c') 
\\
&= (m  \ot \id  )\circ(\id  \ot \ta_\ell) (\zero{a} c \lone{\one{a}} \ot  \ltwo{\one{a}} a' \ot c') 
\\
&=  \zero{a} c \lone{\one{a}}    \zero{\big(  \ltwo{\one{a}} a'  \big)}  c' \,
\lone{\big( \one{ ( \ltwo{\one{a}} a' )} \big) } \ot \ltwo{\big( \one{( \ltwo{\one{a}} a' )} \big)} 
\\
&=  \zero{a} c \lone{\one{a}}    \zero{ (  \ltwo{\one{a}} ) } \zero{ a'  }  c' \,
\lone{ \one{ \big( (  \ltwo{\one{a}} ) } \one{ a'  }\big)} \ot \ltwo{\big( \one{ (  \ltwo{\one{a}} ) } \one{ a'  }\big)} 
\\
\intertext{being $\delta$ an algebra map, and }
&= \zero{a} c     \zero{a'} c' \lone{(\one{a} \one{a'})}   \ot   \ltwo{(\one{a} \one{a'})} 
\end{align*}
being $\tone{h} \zero{(\ttwo{h} )} \ot \one{(\ttwo{h})}  =1 \ot h$,  for all $h \in H$ (following from the fact that the translation map gives a right inverse of  $\chi$).
The  expressions  for the l.h.s and r.h.s. then coincide due to the property \eqref{prop-ell}
of the map $\ell$. 
\end{proof}

The twisting map  $\ta_\ell$ is clearly right normal, $\ta_\ell (1_A \ot c) = c \ot 1$, for each $c\in A$, while in general it is not left normal. Indeed 
$$\ta_\ell(a \ot 1_A)= \zero{a} \ell(\one{a}) = s(a) \in B \ot A,$$
 where  $s: A \to B \ot A$ is the (unital left $B$-module and right $H$-comodule) splitting of the multiplication map  $B \ot A \to A$ associated to the strong connection $\ell$ (see \cite[Lemma 2.3]{BH04}). 
The associative algebra $A \ot^{\ta_\ell} A$  with  multiplication
\beq \label{psi-ell-am}
(a \ot c) \cdot_{\ta_\ell} ( a' \ot c')= a \zero{a'} c \lone{(\one{a'})} \ot \ltwo{ (\one{a'})} c' 
\eeq
will then not-necessarily be unital.
The distributive law  \eqref{C2el}  is implied by the fact that $\ta$ is a right normal twisting map. Even if not left normal in general, $\ta$ also satisfies \eqref{C1el}:
\begin{align*}
\ta_\ell(aa' \ot c) &=  \zero{a} \zero{a'} c \, \lone{(\one{a} \one{a'})} \ot \ltwo{(\one{a} \one{a'})}
=  \zero{a} \zero{a'} c \, \lone{\one{a'}} \lone{\one{a}} \ot \ltwo{\one{a}} \ltwo{\one{a'}}
\\
&= \ta_\ell \left( a \ot \zero{a'} c \lone{\one{a'}} \right) \ltwo{\one{a'}}
=
\ta_\ell \left( a \ot c^{[\ta_\ell]} \right)  {a'}^{[\ta_\ell]} .
\end{align*} 

Even though the property \eqref{prop-ell} of the strong connection does not hold in  general, there are examples in which the formula \eqref{prop-ell} still allows one to construct $\ell$ (e.g. the Hopf bundle in \cite{gw}, described in \S \ref{sec:inst}). For finitely generated algebras, once $\ell$ is defined on the generators of $H$, one can  check whether the formula extends $\ell$ consistently to all of $H$.
If the algebras $H$ and $A$ are commutative, the formula is compatible with the commutativity of the algebras (although nothing can be said, in general, about its compatibility with any additional relations in $H$).
The property \eqref{prop-ell}  is preserved under $2$-cocycle deformations, as shown in Appendix \ref{app:B}. 

The covariant $i$-extension of $A$ along the algebra inclusion $\ff=i : B \to A$, in the sense of \eqref{push}, is the  
  $A$-bimodule given by the balanced tensor product  $A \ot_B A$.  
The twisted multiplication $m^{\ta_\ell}$ of $A \ot^{\ta_\ell} A$ descends to it. 

\begin{prop}\label{prop:ta-tau}
Let $B\subset A$ be a faithfully flat  $H$-Galois extension.  
The multiplication $m^{\ta_\ell}$ of the twisted tensor algebra $A \ot^{\ta_\ell} \!A$, defined via the twisting map in \eqref{psiell}, descends to a well-defined algebra structure 
on the covariant $i$-extension $A \otb A$ of $A$ along the algebra inclusion $\ff=i : B \to A$.  The resulting algebra 
$A \otb^{\ta_\ell}  \!A$ is unital.
\end{prop}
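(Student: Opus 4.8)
The plan is to read this as the special case of the general push-forward construction in which $C=A$ and $\ff=i:B\to A$ is the algebra inclusion, and then to feed it through the machinery already developed. Concretely, I would separate the statement into two independent claims: that the product descends (well-definedness of the quotient product) and that the resulting algebra is unital. For the first I would appeal to Proposition \ref{prop:Blin}: its proof shows that $m^{\ta_\ell}$ factors through $\pi_B\ot\pi_B$ precisely when $\tap=\pi_B\circ\ta_\ell$ is a $B$-bimodule morphism in the sense of \eqref{f-lin}, and this well-definedness is independent of associativity. Associativity of the descended product is then inherited from $A\ot^{\ta_\ell}\!A$ whenever \eqref{prop-ell} holds, since in that case $\ta_\ell$ of \eqref{psiell} is a genuine twisting map by Proposition \ref{prop:ta-ell}; the persistence of associativity in the quotient when \eqref{prop-ell} fails is a separate point (the one flagged as non-restrictive in the text). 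Unitality I would treat last, by exhibiting $1_A\otb 1_A$ as a two-sided unit, which is exactly the normalisation \eqref{nor-B} for $\tap$.

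For the $B$-bimodule condition \eqref{f-lin}, the left $B$-linearity $\tap(ba\ot c)=i(b)\,\tap(a\ot c)$ is immediate: since $b\in B=A^{coH}$ one has $\delta(ba)=b\zero{a}\ot\one{a}$, whence $\ta_\ell(ba\ot c)=b\zero{a}c\lone{\one{a}}\ot\ltwo{\one{a}}=b\cdot\ta_\ell(a\ot c)$, and applying the left $A$-linear map $\pi_B$ gives the claim. The right $B$-linearity $\tap(a\ot c\,i(b))=\tap(a\ot c)\,b$ is the genuine obstacle, precisely because $B$ is not assumed central: the element $b$ sits inside the first tensor leg, between $c$ and $\lone{\one{a}}$, and cannot be freely commuted across to the second leg. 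My plan is to circumvent this by applying the canonical Galois map $\chi:A\otb A\to A\ot H$, $\chi(x\otb y)=x\zero{y}\ot\one{y}$, which is injective (indeed bijective) since $B\subset A$ is $H$-Galois. After applying $\chi$ and using that $b$ is coinvariant, both sides reduce to $\zero{a}cb\ot\one{a}$ once one invokes the identity $\lone{h}\zero{\ltwo{h}}\ot\one{\ltwo{h}}=1\ot h$ (equivalently $\chi\circ\pi_B\circ\ell=\id$, which follows from $\pi_B\circ\ell=\tau$ and $\chi\circ\tau=\id$); the key observation is that right multiplication of the $A$-leg by $b$ is a well-defined operation on $A\ot H$ that therefore commutes with this identity, so moving $b$ onto the correct leg becomes legitimate after $\chi$.

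Finally, for unitality I would verify that $1_A\otb 1_A$ is a unit for $A\otb^{\ta_\ell}\!A$. It is a left unit by right-normality of $\ta_\ell$: since $\ell(1)=1\ot1$ and $\delta(1)=1\ot1$ one has $\ta_\ell(1_A\ot c)=c\ot 1_A$, so $(1\otb1)\dta(c'\otb a')=c'\otb a'$. That it is a right unit is where the content lies: computing $(c\otb a)\dta(1\otb1)=c\,\pi_B\big(\zero{a}\lone{\one{a}}\ot\ltwo{\one{a}}\big)=c\,\zero{a}\,\tone{\one{a}}\otb\ttwo{\one{a}}$ via $\pi_B\circ\ell=\tau$, the translation-map identity $\zero{a}\tone{\one{a}}\otb\ttwo{\one{a}}=1_A\otb a$ collapses this to $c\otb a$. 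This is precisely the normalisation \eqref{nor-B} for $\tap$, and it explains why the failure of left-normality of $\ta_\ell$ on $A\ot A$ (where $\ta_\ell(a\ot1_A)=s(a)\in B\ot A$ rather than $1_A\ot a$) is repaired upon passing to the balanced quotient. The main obstacle throughout is the non-centrality of $B$ in the right $B$-linearity step, which is what forces the detour through the Galois map $\chi$ rather than a direct algebraic manipulation.
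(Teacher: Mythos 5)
Your proposal is correct and follows essentially the same route as the paper: reduce to the $B$-bimodule condition \eqref{f-lin} via Proposition \ref{prop:Blin}, note that $\tap_\ell=\ta_\tau$ is left $B$-linear because $B=A^{coH}$, obtain right $B$-linearity from $b\,\tau(h)=\tau(h)\,b$ (which the paper also justifies by injectivity of $\chi$, exactly your detour), and verify that $1_A\otb 1_A$ is a unit using right-normality on one side and the identity $\zero{a}\tone{\one{a}}\otb\ttwo{\one{a}}=1\otb a$ on the other. The only difference is that you re-derive the translation-map centrality explicitly rather than citing it, which is a matter of exposition, not of substance.
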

\begin{proof}
By Proposition \ref{prop:Blin}, it suffices to show that condition \eqref{f-lin} holds; that is, that the 
 composition $\tap_\ell= \pi_B \circ \ta_\ell$  of the twisting map $\ta_\ell$ with the projection $\pi_B: A \ot  A \to A \otb A$ is 
 $B$-linear,  $\tap_\ell (b a \ot a' b') = b \tap_\ell (a \ot a') b'$. 
Using as above  $\tau(h)=\tone{h} \otb \ttwo{h}$ for  the image of an element $h \in H$ under the translation map, we have
 $\tap_\ell=\ta_\tau$ where 
\beq\label{psitau}
\ta_\tau: A \ot A \to A \otb A \, , \quad a \ot c \mapsto \zero{a} c \tone{\one{a}} \otb \ttwo{\one{a}}.
\eeq
The map $\ta_\tau$ is  left $B$-linear being $B=A^{coH}$. The right $B$-linearity follows from the property of the translation map, 
 $b \, \tau(h)= \tau (h) b$, for any $b\in B$, $h\in H$  (this is derived from the injectivity of the map $\chi$).

The resulting algebra $A \otb^{\ta_\ell}  A$ is unital. The right normality of $\ta_\ell$  gives  $\ta_\tau (1 \ot c) = c \otb 1$, for each $c\in A$.
On the other hand,  
$\ta_\tau (a \ot 1)=\zero{a} \tone{\one{a}} \otb \ttwo{\one{a}} =\chi^{-1}\big(\chi (1 \otb a)\big)= 1 \otb a$ for each $a \in A$.
Then $1_C \otb 1_A$ is a unit for $A \otb^{\ta_\ell}  \!A$.
\end{proof}

This allows us to construct  the twisted push-forward algebra $A \ot_B^{\ta_\ell} A$  of $B \subset A$ along the algebra inclusion $\ff=i : B \to A$ with product
\beq \label{psi-ell-am2}
(a \ot_B c) \cdot^{\ta_\ell} ( a' \ot_B c')= a \zero{a'} c \tone{ (\one{a'})} \ot_B \ttwo{(\one{a'})} c' = a \zero{a'} c \, \tau(\one{a'})  c' .
\eeq
Given the above expression for the product, we henceforth denote the twisted push-forward algebra by $A \ot_B^{\ta_\tau} \!A$   and write $m^{\ta_\tau}$ for its multiplication.

\begin{rem}\label{rem-strong}
For a strong connection $\ell$ which does not satisfy \eqref{prop-ell}, the multiplication $m^{\ta_\ell}$ may fail to be associative. 
Nevertheless the multiplication $m^{\ta_\tau}$ in \eqref{psi-ell-am2} induced on the balanced tensor product would be associative. Its associativity follows from the analogous property  $\tau(hk)= \tone{k} \tone{h} \otb \ttwo{h} \ttwo{k}$ which the translation map indeed satisfies for all $h,k \in H$ and for any Hopf algebra $H$.
\end{rem}

\begin{rem}
The twisted push-forward algebra  $A \ot_B^{\ta_\tau} \!A$ coincides with the balanced tensor product $A \ot_B A$ with algebra structure of the latter induced by the one of the tensor product algebra $A \ot H$ via the canonical map $\chi: A \ot_B A \to A \ot H$ (which is then an algebra map):
$$
m_{\ta_\tau} = \chi^{-1} \circ m_{A \ot H} \circ (\chi \ot \chi).
$$
Indeed the   map   $\ta_\tau$  satisfies $\ta_\tau (a b \ot c)= \ta_\tau (a \ot bc)$,  for all $a \in A$, $b\in B$ and  $c \in C$, and thus descends to a well-defined map on $A \ot_B A$
 $$\sigma: A \ot_B A \to A \ot_B A \, , \quad a \ot_B c \mapsto \zero{a} c \tone{ (\one{a})} \ot_B \ttwo{(\one{a})}  $$
which is the Durdevi\'c braiding \cite[Eq.~(2.2)]{Durd}. It is invertible 
 with inverse  map
$$\sigma^{-1}: A \otb A \to A \otb A\, , \quad a \ot c \mapsto  \tone{ \big( S^{-1}(\one{c})\big)} \ot \ttwo{\big( S^{-1}(\one{c})\big)} a \zero{c} . $$
The compatibility of $\sigma$ with the multiplication $m_A$ of $A$, analogous to the one in \eqref{Oeq-el}, is shown in \cite[Prop. 2.1]{Durd}.
\end{rem}

The map $\ta_\ell$ is an $H$-comodule morphism since the strong connection $\ell$ is such, see \eqref{ell} (and the same argument applies to the maps $\ta_\tau$ and $\tau$).
 From the general result of Theorem \ref{thm:pushHG}, the twisted push-forward algebra $A \ot_B^{\ta_\tau} \!A$ 
 is an $H$ comodule algebra with $A$ as the corresponding subalgebra of coinvariant elements and the inclusion 
 $A\subset A \ot_B^{\ta_\tau} \!A$ is a faithfully flat $H$-Galois extension. 
 
  \begin{lem}
 The twisted push-forward algebra $A \ot_B^{\ta_\tau} \!A$ of $B \subset A$ along the algebra inclusion $i : B \to A$ is a trivial $H$-Galois extension. 
 \end{lem}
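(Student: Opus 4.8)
The plan is to establish triviality by exhibiting a cleaving map $\gamma : H \to A \ot_B^{\ta_\tau} \! A$ that is moreover an algebra morphism, since by the definition recalled in the proof of Lemma \ref{lem:cleft} the existence of such a map makes the extension trivial. Note that Lemma \ref{lem:cleft} itself does not apply here, because the extension $B \subset A$ need not be cleft; triviality is instead a special feature of pushing forward along the inclusion $i$, reflecting the classical fact that the pull-back of a principal bundle along its own projection is canonically trivialized by the diagonal section. The natural algebraic candidate for the corresponding cleaving map is the translation map itself, $\gamma := \tau : H \to A \ot_B^{\ta_\tau} \! A$, $h \mapsto \tone{h} \ot_B \ttwo{h}$.

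The cleanest route is to transport everything through the canonical map. First I would invoke the Remark identifying $m^{\ta_\tau}$ with the transport of $m_{A \ot H}$ along $\chi$: there the canonical map $\chi : A \ot_B^{\ta_\tau}\! A \to A \ot H$ is shown to be an algebra isomorphism onto the tensor product algebra $A \ot H$, with $m^{\ta_\tau} = \chi^{-1} \circ m_{A \ot H} \circ (\chi \ot \chi)$. Since $\chi$ also intertwines the comodule coaction $\id_A \ot_B \delta$ with $\id_A \ot \Delta$ (by coassociativity of $\delta$), it is in fact an isomorphism of $H$-comodule algebras. Under this identification the extension $A \subset A \ot_B^{\ta_\tau}\! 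A$ becomes the tensor product extension $A \subset A \ot H$, which is the trivial $H$-Galois extension: its cleaving map is $\gamma_0 : H \to A \ot H$, $h \mapsto 1_A \ot h$, an $H$-comodule morphism and an algebra map, hence convolution invertible with inverse $\gamma_0 \circ S$.

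Transporting $\gamma_0$ along $\chi^{-1}$ then yields the cleaving map $\gamma = \chi^{-1} \circ \gamma_0$, and because $\chi(\tau(h)) = 1_A \ot h$ this is precisely $\tau$. Being a composite of algebra morphisms it is an algebra map, being a composite of $H$-comodule morphisms it is $H$-equivariant (this equivariance is also exactly the translation-map identity \eqref{prop-ell0} projected to $A \ot_B A$), and its convolution inverse is $\tau \circ S$. This exhibits $\tau$ as an algebra cleaving map, proving that $A \subset A \ot_B^{\ta_\tau}\! A$ is trivial.

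The only point requiring genuine care is the multiplicativity of $\tau$ for the twisted product, which the isomorphism property of $\chi$ packages for free. Should one prefer a direct verification bypassing $\chi$, the main obstacle is the computation $\tau(hk) = \tau(h)\cdot^{\ta_\tau}\tau(k)$: here the key inputs are the anti-multiplicativity $\tau(hk) = \tone{k}\,\tone{h}\ot_B \ttwo{h}\,\ttwo{k}$ of the translation map (Remark \ref{rem-strong}) together with the comodule identity \eqref{prop-ell0} for $\tau$, substituted into the product formula \eqref{psi-ell-am2}; I expect this to be the most calculation-heavy step.
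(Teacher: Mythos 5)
Your proposal is correct and follows essentially the same route as the paper: the cleaving map is the translation map $\tau$, its multiplicativity for $m^{\ta_\tau}$ comes from $\chi$ being an algebra isomorphism onto the tensor product algebra $A\ot H$, and its convolution inverse is $\tau\circ S$. Your extra observations (that Lemma \ref{lem:cleft} does not apply since $B\subset A$ need not be cleft, and that $\chi$ also intertwines the coactions) are accurate but do not change the argument.
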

 \begin{proof}
The extension is cleft. A  cleaving map, that is a convolution invertible $H$-comodule algebra 
map $\gamma: H \to A \ot_B^{\ta_\tau} \!A$, is simply given by the translation map of the $H$-Galois extension 
$B \subset A$. It is an algebra map,  being $\chi$ an algebra map for the product $m^{\ta_\tau}$, and has inverse $\overline{\gamma}:=\gamma \circ S$, with $S$ the antipode of $H$. Thus the extension is trivial.
 \end{proof}

\begin{rem}
In formula \eqref{psitau} the element $c$ could be taken in any algebra containing $A$
so that the product of $c$ with elements in $A$ is defined. If $C$ is any algebra with subalgebra $A$ and, as above, $B\subset A$ a faithfully flat $H$-Galois extension, the map 
\beq\label{psitau2}
\ta_\tau: A \ot C \to C\otb A \, , \quad a \ot c \mapsto \zero{a} c \, \tau(\one{a})
\eeq
defines a unital associative product on  the covariant $i$-extension $C \otb A$ of $A$ along the algebra inclusion $i : B \to C$.  The resulting $H$-Galois extension $C \subset C \otb A$ is also trivial and coincides with the push-forward of the trivial $H$-Galois extension $A \subset A \ot_B^{\ta_\tau} \!A$ along the map $i: B \to C$.
\end{rem}

\subsection{The push-forward to a quotient algebra}\label{sec:quotient}
 
In this section, we consider the  push-forward of a Hopf--Galois extension $B \subset A$ along  the  (surjective) projection  map $\ff=\pi: B \to B/I$ from $B$ to its quotient by a two-sided algebra ideal $I$. This provides the algebraic analogue of restricting a principal bundle to a subspace.

For now, let $A$ be an algebra, $B$ a subalgebra of $A$ and $C:=B/I$ be  the quotient algebra of $B$ by a two-sided algebra ideal $I \subset B$ such that $A \cdot I \subseteq I \cdot A$. 
Let  $C \otb A$ be the covariant extension along the projection  $\pi: B \to B/I$.
Although different twisting maps $\ta:A\ot C\to C\ot A$ may exist, their projection $\tap= \pi_B \circ \ta$ onto $C \otb A$ becomes unique when we require it to satisfy \eqref{nor-B} and the $B$-linearity condition \eqref{f-lin};  
this result does not use that $\ta$ is a twisting map but holds for any linear map $\ta:A\ot C \to C\ot A$.   

\begin{prop}
Let $C=B/I$ and $\ta:A\ot C\to C\ot A$ be a linear map. Assume the ideal $I$ is such that
\beq\label{cond-ideal}
A \cdot I \subseteq I \cdot A \, .
\eeq
To satisfy \eqref{f-lin} and \eqref{nor-B}, the map $\tap$ has to be given by 
$$
\tap (a \ot [b])=  1_C \otb a \, b , \qquad  \forall a \in A , \, [b] \in C.
$$
Moreover the map $\tap$  defines an associative unital algebra multiplication on $C \otb A$.
\end{prop}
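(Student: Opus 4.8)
The plan is to establish the two assertions in turn: first the uniqueness of $\tap$, then the fact that the resulting formula defines a well-defined associative unital product.

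For uniqueness, I would use that $\ff=\pi$ is surjective, so that every element of $C$ is a class $[b]=\ff(b)$ with $b\in B$. Assuming $\tap$ satisfies \eqref{f-lin} and \eqref{nor-B}, and writing $[b]=1_C\,\ff(b)$, the right $B$-linearity in \eqref{f-lin} followed by right normality \eqref{nor-B} gives
$$
\tap(a\ot[b])=\tap\big(a\ot 1_C\,\ff(b)\big)=\tap(a\ot 1_C)\,b=(1_C\otb a)\,b=1_C\otb a\,b,
$$
the last equality using the right $A$-action on $C\otb A$. As $\tap$ is linear and $C$ is spanned by the classes $[b]$, this determines $\tap$ completely; no well-definedness question arises at this stage since $\tap$ is assumed to exist.

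For the converse I would read the displayed formula as a bilinear map $A\times B\to C\otb A$, $(a,b)\mapsto 1_C\otb a\,b$, and show it factors through $A\times C$. This is the crux of the proof and the only place where the hypothesis \eqref{cond-ideal} enters: I must check that $1_C\otb a\,i=0$ in $C\otb A$ for all $a\in A$ and $i\in I$. Writing $a\,i=\sum_k i_k a_k$ with $i_k\in I$, $a_k\in A$ by means of $A\cdot I\subseteq I\cdot A$, and applying the defining relation of $C\otb A$ to each term yields
$$
1_C\otb a\,i=\sum_k 1_C\otb i_k a_k=\sum_k 1_C\,\ff(i_k)\otb a_k=0,
$$
since $\ff(i_k)=[i_k]=0$ in $C=B/I$. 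This is precisely the step I expect to be the main obstacle, and it fails without \eqref{cond-ideal}.

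It then remains to check the conditions \eqref{f-lin}, \eqref{nor-B} and the algebra axioms, all of which I expect to be routine from the explicit formula and the relation $1_C\otb b\,a'=\ff(b)\otb a'$ valid in $C\otb A$. Once \eqref{f-lin} is verified, the descent argument of Proposition \ref{prop:Blin} shows $m^\ta$ descends to $C\otb A$, where by \eqref{mult-psi} it takes the closed form $(c\otb a)\dta([b']\otb a')=c\,\tap(a\ot[b'])\,a'=c\otb a\,b'\,a'$. With this expression associativity is transparent, since both ways of bracketing a triple product of $(c_1\otb a_1)$, $([b_2]\otb a_2)$, $([b_3]\otb a_3)$ reduce to $c_1\otb a_1 b_2 a_2 b_3 a_3$, and one checks directly that $1_C\otb 1_A$ is a two-sided unit.
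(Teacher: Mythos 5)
Your proposal is correct and follows essentially the same route as the paper: derive the formula $\tap(a\ot[b])=1_C\otb ab$ from right $B$-linearity and right normality, use $A\cdot I\subseteq I\cdot A$ for independence of the representative (you spell out the computation $1_C\otb ai=\sum_k[i_k]\otb a_k=0$, which the paper leaves implicit), and verify associativity from the closed form $1_C\otb bab'a'$. The only cosmetic slip is your appeal to Proposition \ref{prop:Blin}, which assumes $\ta$ is a twisting map whereas here it is only a linear map; this is harmless since you anyway verify well-definedness and associativity of the quotient product directly.
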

\begin{proof}
The right $B$-linearity,  together with the second condition in \eqref{nor-B}, uniquely determines the map $\tap$ as 
$$
\tap (a \ot [b])= \tap (a \ot 1) b = 1_C \otb a \, b , \qquad  \forall a \in A , \, [b] \in C.
$$
Condition \eqref{cond-ideal} ensures that the map is well-defined: if
$b,b' \in B$ are such that $\pi(b)=\pi(b')$, that is $b-b' \in I$, then $1_C \ot a \, b = 1_C \ot a \, b' $. Thus 
 $\tap$  does not depend on the representative  of the element $[b]$.
 The left $B$-linearity and  the first condition in \eqref{nor-B} are automatically satisfied by the map 
$\tap$, being $1_C \otb b = [b] \otb 1_A$ for each $b \in B$.

The algebra multiplication  on $C \otb A$ defined  by  $\tap$,
\beq\label{prod-quot}
\big([b] \otb a \big) \cdot^{\tap}  \big([b'] \otb a'\big) = [b] \tap (a \ot [b']) a' =1_C \otb bab'a'
\eeq
is associative. The map $\tap$ satisfies the analogue of the distributive laws \eqref{C1el} and \eqref{C2el}: the
compatibility of $\tap$ with the algebra multiplication $m_A$ is obvious, the one with the multiplication $m_C$ follows from $\pi$ being an algebra map. 
\end{proof}

We now specialize to the case where   $B \subset A$ is an $H$-Galois extension, the map $\tap$ is also an $H$-comodule morphism,
$$
 \taa{[b]} \otb \zero{(\taa{a})} \ot   \one{(\taa{a})} = 1_C \otb \zero{a} b \ot \one{a}=\taa{[b]} \otb \taa{(\zero{a})} \ot   \one{a}  \; , \quad \forall \, a \in A,\,  [b] \in C,
$$
then as a consequence of Theorem \ref{thm:pushHG} we have the following result.
\begin{cor}\label{cor:quot}
Let $B \subset A$ be a faithfully flat $H$-Galois extension and $I$ be a  two-sided algebra ideal $I$ of $B$ which satisfies \eqref{cond-ideal}. Then the (unique)  twisted push-forward algebra $B/I \otb^{\ta} A$  of $B \subset A$ along  $\pi: B \to B/I$ is a faithfully flat $H$-Galois extension of $B/I$.
\end{cor}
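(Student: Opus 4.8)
The plan is to read the statement as a direct application of Theorem~\ref{thm:pushHG}: once the map $\tap$ associated to the projection $\ff=\pi\colon B\to B/I$ is shown to satisfy all the hypotheses gathered in Lemma~\ref{lem:CAB}, the conclusion is immediate. Most of these hypotheses have already been secured in the preceding proposition, which produces the \emph{unique} map $\tap(a\ot[b])=1_C\otb a\,b$ and shows that it is well defined (this is exactly where condition~\eqref{cond-ideal} on the ideal $I$ is used), that it is a $B$-bimodule morphism in the sense of \eqref{f-lin}, that it is normal in the sense of \eqref{nor-B}, and that it obeys the distributive law $\tap(a\ot cc')=\taa{c}\,\tap(\taa{a}\ot c')$ of \eqref{C2el}. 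Thus the only genuinely new verification required for the corollary is the $H$-comodule morphism property of $\tap$.

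I would carry out that verification directly from $\tap(a\ot[b])=1_C\otb a\,b$. Applying the coaction $\id_C\otb\delta$ and using that $A$ is an $H$-comodule algebra, one computes
\begin{align*}
\taa{[b]}\otb\zero{(\taa{a})}\ot\one{(\taa{a})}
&=1_C\otb\zero{(a b)}\ot\one{(a b)}=1_C\otb\zero{a}\,b\ot\one{a},
\end{align*}
where the last equality is the crux: since $B=A^{coH}$, the element $b$ is coinvariant, $\delta(b)=b\ot1_H$, so that $\zero{b}=b$ and $\one{b}=1_H$. On the other hand $\tap(\zero{a}\ot[b])=1_C\otb\zero{a}\,b$, so the right-hand side of the comodule condition also equals $1_C\otb\zero{a}\,b\ot\one{a}$. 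Hence $\tap$ is an $H$-comodule morphism, which is precisely the displayed identity stated just above the corollary, and all the conditions of Lemma~\ref{lem:CAB} hold.

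The one subtlety I would flag is that Theorem~\ref{thm:pushHG} and Lemma~\ref{lem:CAB} are phrased in terms of a twisting map $\ta\colon A\ot C\to C\ot A$, whereas in the quotient situation it is only the projected map $\tap$ that is canonical (several twisting maps $\ta$ may share the same $\tap$, as observed just before the preceding proposition, and one need not even commit to a specific $\ta$). This is not an obstacle, because the entire argument of Theorem~\ref{thm:pushHG} — the isomorphism \eqref{CAA}, the canonical map $\chi^\ta=\id_C\otb\chi$, and the appeal to Takeuchi's lemma — depends on $\ta$ only through $\tap$. The unique $\tap$ above therefore suffices to run that proof verbatim: faithful flatness of $B/I\otb A$ as a left $B/I$-module follows from faithful flatness of $A$ over $B$ via \cite[Prop.~5 Ch.I \S 3]{bour}, and surjectivity of the canonical map together with this faithful flatness yields $B/I=(B/I\otb^{\ta}A)^{coH}$ by \cite[Lemma~4.2]{tak77}, so that $B/I\subset B/I\otb^{\ta}A$ is a faithfully flat $H$-Galois extension. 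The only real computation, the coaction check, is trivialized by the coinvariance of $B$; reconciling the $\ta$-level hypotheses with the canonical $\tap$-level data is the point that most deserves a careful word.
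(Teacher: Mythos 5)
Your proposal is correct and follows essentially the same route as the paper: the preceding proposition supplies all the hypotheses of Lemma~\ref{lem:CAB} except the $H$-comodule morphism property, which the paper verifies via the very identity $\taa{[b]}\otb\zero{(\taa{a})}\ot\one{(\taa{a})}=1_C\otb\zero{a}\,b\ot\one{a}$ that you compute from the coinvariance of $b$, and the conclusion is then read off from Theorem~\ref{thm:pushHG}. Your remark that the argument of Theorem~\ref{thm:pushHG} depends on $\ta$ only through $\tap$ is a careful (and correct) gloss on a point the paper leaves implicit, but it does not change the substance of the proof.
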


We conclude with a characterization of the algebra $\pf$ when the ideal $I$ is finitely generated. 
\begin{lem}\label{lem:IA}
Let $I=\langle y_1, \dots, y_m \rangle$ be the two-sided ideal in $B$ generated by elements $y_1, \dots, y_m \in B$, $m \in \mathbb{N}$. Let $I_A$ be the two sided-ideal in $A$ generated by the elements $y_1, \dots, y_m$.
Then, $C \otb^\ta A$ is isomorphic to the quotient algebra $A \slash I_A$ as an  $H$-comodule algebra.
\end{lem}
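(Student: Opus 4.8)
The plan is to exhibit an explicit pair of mutually inverse unital algebra maps between $C \otb^\ta A = (B/I) \otb^\ta A$ and $A/I_A$, and then verify that they intertwine the coactions. Throughout I use that, by \eqref{prod-quot}, the product on $C \otb^\ta A$ is $([b] \otb a)\cdot^{\tap}([b'] \otb a') = 1_C \otb b a b' a'$, that every element can be written as $[b] \otb a = 1_C \otb ba$ (so $\Phi$ below is surjective), and that $1_C \otb 1_A$ is the unit. I also record the trivial inclusion $I\cdot A \subseteq I_A$, where $I\cdot A$ denotes the subspace spanned by the products $\iota a$ with $\iota \in I$, $a \in A$; indeed each such $\iota = \sum_j b_j y_{i_j} b'_j \in B$ gives $\iota a = \sum_j b_j y_{i_j}(b'_j a) \in I_A$.

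First I would define $\Psi: C \otb^\ta A \to A/I_A$ by $[b] \otb a \mapsto [ba]$, where $[\,\cdot\,]$ now denotes the class modulo $I_A$. Its well-definedness reduces to compatibility with the relations defining the balanced tensor product over $B$ and with the quotient $C = B/I$: the former is immediate since $[b]\otb b''a$ and $[bb'']\otb a$ both map to $[bb''a]$, and the latter holds because if $b-\tilde b \in I$ then $(b-\tilde b)a \in I\cdot A \subseteq I_A$, so $[ba]=[\tilde b a]$. A direct computation with \eqref{prod-quot} shows $\Psi$ is a unital algebra map: $\Psi\big(([b]\otb a)\cdot^{\tap}([b']\otb a')\big)=[bab'a']=[ba]\,[b'a']$.

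The reverse map $\Phi: A/I_A \to C \otb^\ta A$, $[a] \mapsto 1_C \otb a$, is where the hypothesis \eqref{cond-ideal} is essential and where the only real work lies. On $A$ the assignment $a \mapsto 1_C\otb a$ is manifestly a unital algebra map (apply \eqref{prod-quot} with $b=b'=1_B$), so everything hinges on showing it annihilates $I_A$, i.e. that $1_C \otb a\, y_i\, a' = 0$ for every generator $y_i$ and all $a,a' \in A$. Here I invoke \eqref{cond-ideal}: since $a y_i \in A\cdot I \subseteq I\cdot A$, I may write $a y_i = \sum_k \iota_k c_k$ with $\iota_k \in I \subseteq B$ and $c_k \in A$, whence
$$
1_C \otb a\, y_i\, a' = \sum_k 1_C \otb \iota_k\, c_k a' = \sum_k [\iota_k] \otb c_k a' = 0,
$$
the last step because $[\iota_k]=0$ in $C=B/I$. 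Thus $\Phi$ descends to $A/I_A$. I expect this vanishing to be the main obstacle, as it is the unique point at which the non-obvious inclusion $A\cdot I\subseteq I\cdot A$ is used; all other verifications are routine.

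Finally I would check that $\Phi$ and $\Psi$ are mutually inverse, namely $\Psi(\Phi[a])=[a]$ and $\Phi(\Psi([b]\otb a))=\Phi([ba])=1_C\otb ba=[b]\otb a$, giving the algebra isomorphism. For the comodule structure, the coaction on $C\otb^\ta A$ is $\id_C \otb \delta$ by Proposition \ref{prop:Hcas}, while on $A/I_A$ the coaction $\delta$ descends because $I_A$ is $H$-costable: each generator $y_i \in B = A^{coH}$ satisfies $\delta(y_i)=y_i\ot 1_H$, so $\delta(a y_i a')=\zero{a}\,y_i\,\zero{a'}\ot\one{a}\one{a'}\in I_A\ot H$. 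Since $\Phi$ sends $[a]$ to $1_C\otb a$ and $(\id_C\otb\delta)(1_C\otb a)=1_C\otb\zero{a}\ot\one{a}$ corresponds to the descended coaction on $[a]$, the map $\Phi$ is an $H$-comodule morphism, and the isomorphism $C \otb^\ta A \cong A/I_A$ is one of $H$-comodule algebras.
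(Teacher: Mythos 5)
Your proof is correct and follows essentially the same route as the paper: the map $[b]\otimes_B a\mapsto ba$ and its inverse $a\mapsto 1_C\otimes_B a$ are exactly the maps $g$ and $g^{-1}$ used there. If anything, you are more explicit than the paper at the one genuinely delicate point, namely that killing all of $I_A$ (not just the generators $y_j$) under $a\mapsto 1_C\otimes_B a$ requires writing $ay_i\in A\cdot I\subseteq I\cdot A$ via \eqref{cond-ideal}, and likewise in spelling out why the coaction descends to $A/I_A$.
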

\begin{proof}
Define the map 
\beq\label{map-IA}
g: C \otb^\ta A \to A \slash I_A \, , \quad [b] \otb a = 1_C \otb ba \mapsto b a \, .
\eeq
It is well-defined since $[b]=[b'] \iff ba -b'a \in   I_A$. It is an algebra morphism:
$$
g \big( (1_C \otb ba ) \cdot^\ta (1_C \otb b'a' )  \big) = g \big( 1_C \otb ba  b'a'   \big) =  bab'a' =  g(1_C \otb ba )   g(1_C \otb b'a' ).
$$
It is invertible with inverse $g^{-1}:A \slash I_A \to C \otb^\ta A$, $a \mapsto 1_C \otb a$ which is well-defined because $1_C \otb y_j =0$, for $j=1, \dots, m$.
Since the generators $\{y_j \}$ of the ideal belong to the subalgebra of coinvariant elements $B$, the map $g$ in \eqref{map-IA} is also an $H$-comodule morphism.
\end{proof}

\subsubsection{Push-forwards of the $SU(2)$-Hopf bundle to a quotient algebra}\label{sec:inst}

Let us consider the $\mathcal{O}(SU(2))$--Galois extension $\mathcal{O}(S^4_\theta) \subset \mathcal{O}(S^7_\theta)$ introduced in \cite{gw}. 

The $*$-algebra $\mathcal{O}(S^4_\theta)$ is generated by a central real element $x$ and elements
 $\alpha,\beta,\alpha^*,\beta^*$ with   commutation relations
\beq\label{4sphere}
\alpha \beta = \lambda \beta \alpha \; , \quad
\alpha^* \beta^* = \lambda \beta^* \alpha^*\; , \quad
\beta^* \alpha  = \lambda \alpha  \beta^* \; , \quad
\beta \alpha^* = \lambda \alpha^* \beta ,  \qquad \lambda:=e^{2 \pi i \theta} ,
\eeq 
and sphere relation $\alpha^* \alpha + \beta^*\beta + x^2 =1$, for $\theta$ a real parameter.
The   $*$-algebra $\mathcal{O}(S^7_\theta)$   is   generated by elements
$\{z_j, z_j^* \,; \, j=1,\dots, 4\}$ with relations
\beq \label{7sphere}
z_j z_k = \lambda_{j k} \, z_k z_j \; , \qquad
z_j^* z_k = \lambda_{k j} \, z_k z_j^* \; , \qquad
z_j^* z_k^* = \lambda_{j k} \, z_k^* z_j^* \; , \;
\eeq and spherical relation $\sum z_j^* z_j  =1$.
The deformation matrix $(\lambda_{jk})$ is chosen so that the coordinate algebra $\mathcal{O}(SU(2))$ of the (classical) group $SU(2)$ coacts on $\mathcal{O}(S^7_\theta)$ with subalgebra of coinvariants given by $\mathcal{O}(S^4_\theta)$, the coordinate algebra of the deformed $4$-sphere $S^4_\theta$.
The matrix depends on a single deformation parameter $\mu=e^{\pi i \theta}$, with 
 $$
\lambda_{12}= \lambda_{34}=1 \, , \quad 
\lambda_{13}=   \lambda_{24}=\bar \mu  \, , \quad 
\lambda_{14}=\lambda_{23}= \mu\, , \quad  
\lambda_{jk}=\bar \lambda_{kj} .
$$
The right coaction $\delta$ of $\mathcal{O}(SU(2))$ on  $\mathcal{O}(S^7_\theta)$ is defined  on the algebra generators as
\begin{eqnarray}\label{princ-coactSU2}
\delta: \mathsf{u}
 &\longmapsto&
\mathsf{u}
\overset{.}{\otimes}
\mathsf{w}
\;, \quad 
\mathsf{u}=
\begin{pmatrix}
z_1& z_2 & z_3& z_4
\\
-z_2^*  & z_1^* & -z_4^* & z_3^*
\end{pmatrix}^t 
, \quad 
\mathsf{w}=
\begin{pmatrix} 
 w_1 & -w_2^*
\vspace{2pt}
\\
w_2  & w_1^* 
\end{pmatrix} ,
\end{eqnarray}
with $w_1, w_2$ denoting the coordinate functions of $SU(2)$ (and $\overset{.}{\otimes}$   the composition of the tensor product   with matrix multiplication).
The coaction is extended to the whole $\mathcal{O}(S^7_\theta)$ as a
$*$-algebra morphism.
The subalgebra of coinvariant elements in $\mathcal{O}(S^7_\theta)$
is  generated by the entries of 
 the
matrix $p:=uu^*$, and identified with $\mathcal{O}(S^4_\theta)$ via
\beq \label{s4ins7}
\alpha=2(z_1 z_3^* + z_2^* z_4) \; , \quad  \beta=2(-z_1^* z_4 + z_2 z_3^*) \; ,
\quad
x=z_1 z_1^* + z_2 z_2^* - z_3 z_3^* - z_4 z_4^* .
\eeq
With this identification, the commutation relations between the generators of $\mathcal{O}(S^7_\theta)$ and those of its subalgebra $\mathcal{O}(S^4_\theta)$   are given by
\begin{align}\label{comm7-4} 
& z_1 \, \alpha=  \mu \, \alpha \, z_1  \;, \; 
&& z_2 \, \alpha= \bar{\mu} \, \alpha \, z_2  \;, \;
&& z_3 \, \alpha= \mu \, \alpha \, z_3  \;, \;
&& z_4 \, \alpha= \bar{\mu} \, \alpha \, z_4  \;, \;
\nn \\
& z_1 \, \beta= \mu \, \beta \, z_1  \;, \; 
&& z_2 \, \beta= \bar{\mu} \, \beta \, z_2  \;, \; 
&& z_3 \, \beta= \bar{\mu} \, \beta \, z_3  \;, \; 
&& z_4 \, \beta= \mu \, \beta \, z_4  \; 
\end{align}
and analogous ones for $z_j^*$ with $\mu \leftrightarrow \bar\mu$,   while $x$ commutes with all $z_j$, $z_j^*$. 
\medskip

\noindent
\textbf{The push-forward   to the quotient algebra  $\mathcal{O}(S^3_\theta)$}.
Let us  consider the quotient algebra of $B=\mathcal{O}(S^4_\theta)$ by the two-sided algebra ideal generated by the central element $x$: 
$$
C= \mathcal{O}(S^4_\theta) \slash \langle x \rangle  \simeq \mathcal{O}(S^3_\theta) .
$$
The quotient  is identified with the coordinate algebra of the $3$-dimensional $\theta$-sphere $S^3_\theta$, of which $S^4_\theta$ is a suspension. 

Condition \eqref{cond-ideal} holds, being $x$ central in the algebra $\mathcal{O}(S^7_\theta)$, and we can form 
the push-forward algebra $\mathcal{O}(S^3_\theta) \ot_{\mathcal{O}(S^4_\theta)}^\ta \mathcal{O}(S^7_\theta)$ along the  projection  $\pi: \mathcal{O}(S^4_\theta) \to  \mathcal{O}(S^3_\theta)$. The resulting algebra is an $\mathcal{O}(SU(2))$--Galois extension of $\mathcal{O}(S^3_\theta)$ (by Corollary~\ref{cor:quot}).
\begin{lem}
The $\mathcal{O}(SU(2))$--Galois extension of $\mathcal{O}(S^3_\theta) \subseteq \mathcal{O}(S^3_\theta) \ot_{\mathcal{O}(S^4_\theta)}^\ta \mathcal{O}(S^7_\theta)$ is trivial.
\end{lem}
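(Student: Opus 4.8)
The plan is to produce an explicit trivialization in the form of a cleaving map that is also an algebra morphism, reading it off from the first four coordinates of $\mathcal{O}(S^7_\theta)$. Writing $B=\mathcal{O}(S^4_\theta)$, $A=\mathcal{O}(S^7_\theta)$, $C=\mathcal{O}(S^3_\theta)$ and $H=\mathcal{O}(SU(2))$, Lemma~\ref{lem:IA} first identifies the push-forward algebra $\pf$ with the quotient $\mathcal{O}(S^7_\theta)/\langle x\rangle$ as an $H$-comodule algebra, the ideal $I=\langle x\rangle$ being generated by the single central element $x$. By Lemma~\ref{lem:cleft}, or directly from the definition of a trivial extension, it then suffices to exhibit a convolution invertible $H$-comodule $*$-algebra map $\gamma:H\to \mathcal{O}(S^7_\theta)/\langle x\rangle$; its convolution inverse will automatically be $\gamma\circ S$.

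Next I would analyse the quotient. Setting $x=0$ in the identification \eqref{s4ins7}, together with the $S^7_\theta$ sphere relation $\sum_j z_j^*z_j=1$, forces $z_1 z_1^*+z_2 z_2^*=z_3 z_3^*+z_4 z_4^*=\tfrac12$, since these two coinvariant elements equal $(1\pm x)/2$. Because $\lambda_{12}=\lambda_{21}=1$ in \eqref{7sphere}, the elements $z_1,z_2,z_1^*,z_2^*$ generate a commutative $*$-subalgebra, and the upper $2\times2$ block of the matrix $\mathsf{u}$ in \eqref{princ-coactSU2}, namely $\mathsf{u}_+:=\left(\begin{smallmatrix} z_1 & -z_2^*\\ z_2 & z_1^*\end{smallmatrix}\right)$, satisfies $\mathsf{u}_+\mathsf{u}_+^*=\mathsf{u}_+^*\mathsf{u}_+=\tfrac12\,\mathbf 1$ in the quotient, the off-diagonal entries vanishing precisely because $\lambda_{12}=1$. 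Hence $\mathsf{v}_+:=\sqrt2\,\mathsf{u}_+$ is unitary and of the same $SU(2)$-matrix shape as $\mathsf{w}$. Since $H$ is the commutative $*$-algebra on $\mathsf{w}$ with the single relation $w_1 w_1^*+w_2 w_2^*=1$, the assignment $\gamma(\mathsf{w}):=\mathsf{v}_+$, that is $\gamma(w_1)=\sqrt2\,z_1$ and $\gamma(w_2)=\sqrt2\,z_2$, is a well-defined $*$-algebra map, the defining relation being sent to $2(z_1 z_1^*+z_2 z_2^*)=1$.

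Finally I would check equivariance: from $\delta(\mathsf{u})=\mathsf{u}\,\overset{.}{\otimes}\,\mathsf{w}$ the upper block obeys $\delta(\mathsf{u}_+)=\mathsf{u}_+\,\overset{.}{\otimes}\,\mathsf{w}$, whence $\delta(\gamma(\mathsf{w}))=\gamma(\mathsf{w})\,\overset{.}{\otimes}\,\mathsf{w}=(\gamma\ot\id)\Delta(\mathsf{w})$, so that $\gamma$ is an $H$-comodule map; being an algebra map out of a Hopf algebra, it is convolution invertible with inverse $\gamma\circ S$. Thus $\gamma$ is a cleaving algebra map and the extension $C\subset\pf$ is trivial, in agreement with the classical fact that an $SU(2)$-bundle over the equatorial $S^3\subset S^4$ is trivial since $\pi_2(SU(2))=0$. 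The only genuinely computational point, and the one I expect to be the main obstacle to get right, is the verification that the quotient relation $x=0$ turns $\mathsf{u}_+$ into a unitary of $SU(2)$-type: one must use both sphere relations to fix the diagonal entries to $\tfrac12$ and the specific value $\lambda_{12}=1$ both to kill the off-diagonal ones and to guarantee that the subalgebra generated by $z_1,z_2$ and their adjoints is commutative, as required for $\gamma$ to be an algebra map into the classical coordinate algebra $\mathcal{O}(SU(2))$.
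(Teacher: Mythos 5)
Your proof is correct and follows essentially the same route as the paper: identify the push-forward with $\mathcal{O}(S^7_\theta)/\langle x\rangle$ via Lemma \ref{lem:IA}, observe that $x=0$ forces $z_1z_1^*+z_2z_2^*=\tfrac12$ and that $\lambda_{12}=1$ makes the $*$-subalgebra generated by $z_1,z_2$ commutative, and take $\gamma(\mathsf{w})=\sqrt2\,\mathsf{u}_+$ as a cleaving algebra map with convolution inverse $\gamma\circ S$. The only (harmless) slip is the appeal to Lemma \ref{lem:cleft}, which concerns push-forwards of extensions that are already cleft and does not apply here since $\mathcal{O}(S^4_\theta)\subset\mathcal{O}(S^7_\theta)$ is not trivial; your fallback "directly from the definition of a trivial extension" is the correct justification and is what the paper uses.
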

\begin{proof}
By Lemma \ref{lem:IA}, there is the  isomorphism $\mathcal{O}(S^3_\theta) \ot_{\mathcal{O}(S^4_\theta)}^\ta \mathcal{O}(S^7_\theta) \simeq \mathcal{O}(S^7_\theta) \slash \langle x \rangle$ of $\mathcal{O}(SU(2))$--comodule algebras. 
In the quotient algebra of $\mathcal{O}(S^7_\theta)\slash \langle x \rangle$ one has
$z_1 z_1^* + z_2 z_2^* = z_3 z_3^* + z_4 z_4^* = \frac{1}{2}$ and the  $*$-subalgebra generated by $z_1, z_2$, which is commutative, is isomorphic to   $\mathcal{O}(SU(2))$.
We define a map $\gamma: \mathcal{O}(SU(2)) \to \mathcal{O}(S^7_\theta) \slash \langle x \rangle$ by setting  it  
on generators as  
$$ 
\begin{pmatrix}  
 w_1 & -w_2^*
\vspace{2pt}
\\
w_2  & w_1^* 
\end{pmatrix} 
\mapsto
\sqrt{2} \begin{pmatrix}  
z_1 & -z_2^*
\vspace{2pt}
\\
z_2  & z_1^* 
\end{pmatrix}
$$
and extending it as a morphism of (commutative) algebras.
The map $\gamma$ is an $H$-comodule morphism (cf. \eqref{princ-coactSU2}), and is convolution invertible with inverse  $\textsf{w} \mapsto 
 \sqrt{2} \small\begin{pmatrix}  
z_1^* & \! z_2^*
\\
-z_2  & \! z_1 
\end{pmatrix}$.  
The map $\gamma$ defines a cleaving map for the extension, proving that the extension is trivial.
\end{proof}

\medskip

\noindent
\textbf{The push-forward   to the quotient algebra  $\mathcal{O}(S^2)$}.
Let us  consider the quotient  of $\mathcal{O}(S^4_\theta)$ by the two-sided algebra ideal $I$ generated by $\alpha, \alpha^*$: 
$$
C= \mathcal{O}(S^4_\theta) \slash \langle \alpha, \alpha^* \rangle  \simeq \mathcal{O}(S^2).
$$
The resulting quotient algebra is the commutative algebra of coordinates of the $2$-sphere $S^2$.
 The set
 $ \big\{ Z_{m_j n_j}:=\prod_{j=1}^4 z_j^{m_j} (z_j^*)^{n_j} , \; m_j, n_j \in \mathbb{N}, \;   m_4 n_4=0 \big\}$ is a vector space basis of $\mathcal{O}(S^7_\theta)$.
 From the commutation relations in \eqref{comm7-4} it follows that, for each element of the basis, 
 $\alpha Z_{m_j n_j}$ is proportional to $\Z_{m_j n_j}\alpha$, and similarly for $\alpha^*$. Thus condition \eqref{cond-ideal} holds,
$\mathcal{O}(S^7_\theta) \cdot I \subseteq I \cdot \mathcal{O}(S^7_\theta)$, and we can form 
the push-forward algebra $\mathcal{O}(S^2) \ot_{\mathcal{O}(S^4_\theta)}^\ta \mathcal{O}(S^7_\theta)$ along the  projection  $\pi: \mathcal{O}(S^4_\theta) \to \mathcal{O}(S^2)$. Its  multiplication
is  as in \eqref{prod-quot} with
$\tap$ the linear map 
\beq\label{ta-S2}
\tap: \mathcal{O}(S^7_\theta) \ot \mathcal{O}(S^2) \to \mathcal{O}(S^2) \ot_{\mathcal{O}(S^4_\theta)} \mathcal{O}(S^7_\theta) \; , \quad Z_{m_j n_j} \ot [b] \mapsto 1 \ot_{\mathcal{O}(S^4_\theta)} Z_{m_j n_j} \, b \, . 
\eeq
We now simply write  $x, \beta, \beta^*$ for the algebra generators of the  quotient $\mathcal{O}(S^2)$, with   relation $\beta^* \beta  + x^2 =1$; a vector space basis of $\mathcal{O}(S^2)$ is  
$\{C_{r s t}=x^{r} \beta^s  (\beta^*)^t \, ,\; r,s,t \in \mathbb{N},\, st=0\}$. Then
\begin{align*}
\tap \big(Z_{m_j n_j} \ot C_{r s t} \big) &= 
1 \ot_{\mathcal{O}(S^4_\theta)}Z_{m_j n_j} \big( x^{r} \beta^s  (\beta^*)^t  \big) \\
& =1 \ot_{\mathcal{O}(S^4_\theta)}  \mu^\eta \big( x^{r} \beta^s  (\beta^*)^t  \big) Z_{m_j n_j} 
= \mu^\eta \, C_{r s t}  \ot_{\mathcal{O}(S^4_\theta)}Z_{m_j n_j} 
\end{align*}
 with proportionality coefficient $\mu^\eta$ whose exponent is computed out of \eqref{comm7-4}. 
 The resulting algebra $\mathcal{O}(S^2) \ot_{\mathcal{O}(S^4_\theta)}^\ta \mathcal{O}(S^7_\theta)$ is an
$\mathcal{O}(SU(2))$--Galois extension of $\mathcal{O}(S^2_\theta)$.
By Lemma \ref{lem:IA}, there is the  isomorphism $\mathcal{O}(S^2) \ot_{\mathcal{O}(S^4_\theta)}^\ta \mathcal{O}(S^7_\theta) \simeq \mathcal{O}(S^7_\theta) \slash \langle \alpha, \alpha^* \rangle$ of $\mathcal{O}(SU(2))$--comodule algebras.

 \section{Comparing Ehresmann--Schauenburg bialgebroids}\label{sec:ES}
In this section we consider the Ehresmann--Schauenburg bialgebroid of a Hopf--Galois extension obtained via the push-forward construction and compare it with that of the original 
Hopf--Galois extension. 

\subsection{Bialgebroids}
Let us  recall some definitions  and basic results on rings, corings and bialgebroids; we refer to \cite{BW} for more details.

Let $B$ be an algebra. A \textit{$B$-ring}, or an \textit{algebra over $B$}, is a pair $(L,\inc)$ where $L$ is an algebra (over $\kk$) and $\inc: B \to L$ is an algebra morphism, see e.g. \cite[\S 31.1]{BW}.  Then $L$ is a $B$ bimodule with $b \cdot x \cdot b'= \inc(b) x \inc(b')$, for $b,b' \in B$ and $x \in L$. A morphism between a $B$-ring $ (L,\inc)$ and a $C$-ring $(M, v)$, for $C$ an algebra, is a pair $(\ff , \gamma)$ where 
$\ff: B \to C$ and $\gamma: L \to M$
are algebra morphisms such that $ \gamma \circ \inc= v \circ \ff$.
Given an algebra morphism $\ff: B \to C$, any $C$-ring $(M, v)$ defines a $B$-ring $(\widetilde{M}, \tilde{v})$ with $\widetilde{M}=M$ and algebra map $\tilde{v}:=v \circ \ff: B \to M$.

\begin{ex}We are interested in \textit{algebras over  enveloping algebras}, see e.g. \cite[\S 31.2]{BW}.
An algebra $L$ with an algebra map $\inc : B \ot B^{op} \to L$ is a $(B\ot B^{op})$-ring
if and only if there exist algebra maps $\ss: B \to L$ and $\tt: B^{op} \to L$ with  
$\ss(b)\tt(b')= \tt(b')\ss(b)$, for all $b,b' \in B$. 
Indeed, given a $(B\ot B^{op})$-ring $(L, \inc)$   the restrictions maps
$$
\ss := \inc \circ ( \, \cdot \, \ot_B 1_B ) : B \to L \quad \mbox{and} \quad \tt := \inc \circ ( 1_B \ot_B  \, \cdot \, ) : B^{op} \to L
$$
are  algebra maps with the required property.
Vice versa, given $(L,\ss,\tt)$ one defines the algebra map $\inc: B \ot B^{op} \to L$, sending $b \ot b' \mapsto \ss(b)t(b')$.
The maps $\ss$ and $\tt$ are  respectively  called the  source and the  target map of the
$(B\ot B^{op})$-ring $L$.  
\\
Given an algebra morphism $\ff: B \to C$, any $C\ot C^{op}$-ring $(M, \ss, \tt)$ defines a $B \ot B^{op}$-ring $(\widetilde{M}, \tilde{\ss}, \tilde{\tt})$ with $\widetilde{M}=M$ and  source and target maps $\tilde{\ss}:= \ss \circ \ff$ and
$\tilde{\tt}:= \tt \circ \ff$.  
\end{ex}

Dually, see e.g.  \cite[\S 17.1]{BW}, with $B$ an algebra, a  \textit{$B$-coring} is a
triple $(L ,\underline{\Delta},\underline{\varepsilon})$ where $L $ is a $B$-bimodule and 
$\underline{\Delta}:L \to L \ot_{B} L $ and $\underline{\varepsilon}: L \to B$
are $B$-bimodule morphisms satisfying the 
coassociativity and counit conditions
\begin{align} 
(\underline{\Delta} \ot _{B} \id_L )\circ \underline{\Delta}= (\id_L  \ot _{B} \underline{\Delta})\circ \underline{\Delta}, \quad
(\underline{\varepsilon} \ot _{B} \id_L )\circ \underline{\Delta} = \id_L  =(\id_L  \ot _{B} \underline{\varepsilon})\circ \underline{\Delta}.
\end{align}
A morphism between a $B$-coring $L$ and a $C$-coring $M$ is a pair $(\gamma, \ff )$ where
$\ff: B \to C$ is an algebra morphism (also used to give to $C$ a $B$-bimodule structure) and  $\gamma: L \to M$ is a $B$-bimodule morphism   
such that
\beq\label{prop-morph-corings}
\xi \circ (\gamma \otb \gamma) \circ \underline{\Delta}_L =  \underline{\Delta}_M \circ \gamma \; , \qquad
\underline{\varepsilon}_M \circ \gamma = \ff \circ \underline{\varepsilon}_L
\eeq
where $\xi: M \ot_B M \to M \ot_C M$ 
 is the canonical morphism of $B$-bimodules induced by the map $\ff$, see \cite[\S 24.1]{BW}.
We write  $\underline{\Delta} (x)= \coone{x} \ot_{B}  \cotwo{x}$ for the comultiplication of any $x \in L$.

Given an algebra morphism $\ff: B \to C$, from any  $B$-coring $( L, \underline{\Delta}, \underline{ \varepsilon})$ one can form a $C$-coring $(\widetilde{L}, \underline{\widetilde{\Delta}}, \underline{\widetilde{\varepsilon}})$, the \textit{base ring extension of $L $}, \cite[\S 17.2]{BW}. This consists of the $C$-bimodule 
\beq\label{tildeL}
\widetilde{L}:=C \ot_B L \ot_B C
\eeq 
with  comultiplication 
\begin{align}\label{com-tildeL}
\underline{\widetilde{\Delta}}:& \; C \ot_B L \ot_B C \to C \ot_B L \ot_B C \ot_{C} C \ot_B L \ot_B C \simeq 
 C \ot_B L \ot_B C   \ot_B L \ot_B C \, , \nn \\
 & \; c \ot_B x \ot_B c' \; \mapsto (c \ot_B \coone{x}) \ot_{B} 1_C \ot_B  
 (\cotwo{x} \ot_B c')
 \end{align}
and counit 
\beq\label{cou-tildeL}
\underline{\widetilde{\varepsilon}}:  C \ot_B L \ot_B C \to C  \; , \quad c \ot_B x \ot_B c'  \mapsto 
 c \cdot \underline{\varepsilon}(x) \cdot c' = c \, \ff \left( \underline{\varepsilon}(x)  \right) c' .
\eeq

Then a morphism between a $B$-coring $L$ and a $C$-coring $M$ can  equivalently be seen  as the data of an algebra map $\ff: B \to C $  and  a morphism of $C$-corings  $\widetilde\gamma: \widetilde{L}  \to M$.

A \textit{$B$-bialgebroid} $(L , \ss, \tt, \underline{\Delta}, \underline{\varepsilon})$ is a  $(B\ot B^{op})$-ring  $(L , \ss, \tt)$ and a  
 $B$-coring   $(L  , \underline{\Delta}, \underline{\varepsilon})$ with compatibility among the structures:
 \begin{enumerate}[label=(\roman*)]
\item the $B$-bimodule structure of the $B$-coring   $L$ is given  via the source and the target maps:
$b \cdot x  = \ss(b)   x$, $x \cdot b=  \tt( b) x$   for all $b \in B$, $x\in L$; 

\vspace{.2 cm}
\item  
$Im (\underline{\Delta}) \subseteq 
L  \times_{B} L  := \left\{\ \sum\nolimits_j x_j\ot_{B} y_j\ |\ \sum\nolimits_j x_j \tt(b) \ot_{B} y_j =
\sum\nolimits_j x_j \ot_{B}  y_j \ss(b), \; \forall  \, b \in B \right\}
$
and the corestriction of $\underline{\Delta}: L \to L  \times_{B} L$ is an algebra map for 
$L  \times_{B} L$ with component-wise multiplication; 
\vspace{.2 cm}
\item
the counit $\underline{\varepsilon} : L  \to B$ is a unital map,   $\underline{\varepsilon} (1_{L })=1_{B}$,
and  satisfies
$\underline{\varepsilon}(x \, \ss(\underline{\varepsilon}(y)))=\underline{\varepsilon}(xy)=\underline{\varepsilon}(x \, t (\underline{\varepsilon}(y)))$,  for all $x,y \in L$.
\end{enumerate}

A morphism between a $B$-bialgebroid $L$ and a $C$-bialgebroid $M$  is a morphism $(\gamma, \ff )$
 between the $B$-coring $L$ and the $C$-coring $\M$ such that  $\gamma: L \to \M$ is also an algebra map.

\begin{ex}[\textit{The Ehresmann--Schauenburg bialgebroid}]
Let $B \subseteq A$ be a faithfully flat $H$-Galois extension. Denote by  $\tau$ the translation map.
Then the $B$-bimodule 
\beq\label{ES}
\coL  :=\{a\ot  a' \in A\ot  A \, | \, \zero{a} \ot  \tau(\one{a})a' = (a \ot  a') \ot _B 1_A\}
\eeq
is a bialgebroid,  called the  Ehresmann--Schauenburg bialgebroid (see  \cite[\S 34.14]{BW}).
It is a $B$-coring with comultiplication and counit
\beq\label{costr-L}
\underline{\Delta}(a \ot a' ) := \zero{a}\ot  \tau(\one{a})\ot a'  = 
(\zero{a} \ot \tone{\one{a}}) \ot_B (\ttwo{\one{a}} \ot a'), \qquad
\underline{\varepsilon}(a \ot  a') : = a a' ,
\eeq
for all $a \ot  a'  \in \coL$,  
and a $B \ot B^{op}$-ring with algebra multiplication
$$
(a \ot  a' ) \cdot (d \ot  d' ) :=  ad \ot d' a'  \, , \quad   \forall a \ot  a' , \,d \ot  d'  \in \coL ,
$$
thus $\coL$
 is a subalgebra of $A \ot A^{op}$.
 The  source and the target   maps are $\ss(b)=b\ot 1_A$ and $\tt(b)=1_A \ot  b$. 
The $B$-bimodule  $\coL$ coincides with the set  $(A\ot A)^{coH}$ of elements of $A \ot A$ that are coinvariants  for the diagonal $H$-coaction on $\delta^{A\ot  A}$, 
\beq\label{ESbis}
\coL  \simeq (A \ot A)^{coH} = \{ a\ot  a' \in A\ot  A \, | \, \zero{a}\ot  \zero{a'} \ot \one{a}\one{a'} = a\ot  a' \ot 1_H\}.
\eeq
This also implies that $aa'\in B$.
\end{ex}

\subsection{The Ehresmann--Schauenburg bialgebroid of a push-forward} 
Let $B \subseteq A$ be a faithfully flat $H$-Galois extension and let $\coL$ be its associated Ehresmann--Schauenburg bialgebroid. 
Let  $F: B \to C$ be a morphism of  algebras. Let $\ta:A\ot C\to C\ot A$ 
be a  twisting map which is a right $H$-comodule morphism and such that 
$\tap$ is a $B$-bimodule morphism which satisfies \eqref{nor-B} and  
$\tap(a \ot cc') = \taa{c}  \tap(\taa{a} \ot c')$.

The resulting twisted push-forward algebra $\Omega =: \pf$ of $B \subset A$ along the map $\ff:B \to C$ is a faithfully flat $H$-Galois extension of $C$, as from Theorem \ref{thm:pushHG}.
Its associated Ehresmann--Schauenburg bialgebroid $\coM$ is 
\begin{align}\label{ES-M}
 & \coM= \{x\ot  x' \in \Omega \ot \Omega   \, | \; \zero{x} \ot  \tau^\ta (\one{x})  \dta x' =x \ot  x' \ot _C (1_C \otb 1_A) \} \nn
\\
& \quad = \big\{ (c \otb a) \ot  (c' \otb a') \in \Omega \ot \Omega\, \big| \;  
\\
& (c \otb \zero{a})\hspace{-1pt} \ot \hspace{-1pt}(1_C \ot_B \tone{\one{a}}) \hspace{-1pt}\ot_C 
\hspace{-1pt}(\taa{c'}\hspace{-1pt} \otb \hspace{-1pt}\taa{ (\ttwo{\one{a}})} a') = (c \otb a) \hspace{-1pt}\ot \hspace{-1pt}(c' \otb a')\hspace{-1pt} \ot _C \hspace{-1pt}(1_C \otb 1_A) \big\}. \nn 
\end{align}

\noindent
The $C$-bimodule structure is given by left and right multiplication: using \eqref{nor-B}, for $c'' \in C$,
\begin{align} \label{coM-Cbim}
c''  \cdot \big( c \otb a \ot c' \otb a' \big) &= c'' c \otb a \ot  c' \otb a'  \; , \quad \nn \\
\big(c \otb a \ot c' \otb a' \big) \cdot c'' &= c \otb a \ot c' \taa{c''} \otb \taa{a'} .
\end{align}
As a $C$-bimodule, $\coM$ coincides with the set   $(\Omega \ot \Omega)^{coH}$ of elements   coinvariants  for the diagonal $H$-coaction (cf. \eqref{ESbis}). 
As from \eqref{costr-L} and \eqref{tau-pf}, $\coM$ has comultiplication
\beq\label{comult-M}
\underline{\Delta}_\coM \big( c \otb a \ot c' \otb a' \big) =  
\big(c \otb \zero{a} \ot 1_C \ot_B \tone{\one{a}} \big) \ot_C \big(1_C \ot_B \ttwo{\one{a}} \ot c' \otb a' \big)  . 
\eeq

\noindent For the counit we first observe that since $(c \otb a) \ot (c' \otb a') \in (\Omega \ot \Omega)^{coH}$,  
$$
(c \otb \zero{a}) \ot (c' \otb \zero{a'})  \ot \one{a}\one{a'} = (c \otb a) \ot (c' \otb a') \ot 1_H ,  
$$
and applying $m^\ta \ot \id_H$, we have 
$$
(c \taa{c'} \otb \taa{\zero{a}} \zero{ a'}) \ot \one{a} \one{ a'}  = (c  \taa{c'} \otb \taa{a}  a')\ot 1_H.
$$
Using that $\ta$ is an $H$-comodule morphism, the previous equation implies that $c \taa{c'} \otb \taa{a} a'$ belongs  to $C \otb B$: 
\begin{align*}
 c \taa{c'} \otb  \delta(\taa{a} a' )
 &= (c \taa{c'} \otb \zero{\taa{a}} \zero{a'}) \ot \one{\taa{a}} \one{ a'} 
= (c \taa{c'} \otb \taa{\zero{a}} \zero{ a'}) \ot \one{a} \one{ a'} 
\\
&= (c  \taa{c'} \otb \taa{a}  a')\ot 1_H.
\end{align*}
From the expression for the counit in \eqref{costr-L}
one then gets  
\begin{align}\label{counit-M}
\underline{\varepsilon}_\coM  \big((c \otb a) \ot  (c' \otb a' )\big)  
= c \taa{c'} \otb \taa{a} a' = c \taa{c'} \ff \left( \taa{a} a' \right) \otb 1_A \in C \otb 1_A \simeq C .
\end{align}

The algebra morphism $u_A  : A \to C \otb^\ta A$, $a \mapsto 1_C \otb a$ (see \eqref{projpi_alg}) induces a 
coring morphism between $\coL$ and $\coM$.  
\begin{lem}\label{lem:LM}
There exists a   morphism of bialgebroids
$(\gamma, \ff)$ between the $B$-bialgebroid $\coL$ and the  $C$-bialgebroid $\coM$,  given by the algebra map $\ff:B \to C$
and the $B$-bimodule map 
\beq\label{mappa-coring}
\gamma:  \coL \to \coM \, , \quad a \ot a' \mapsto (1_C \otb a) \ot (1_C \otb a') .
\eeq
\end{lem}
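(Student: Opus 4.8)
The plan is to verify that the pair $(\gamma, \ff)$ satisfies all the conditions listed after \eqref{prop-morph-corings}: that $\gamma$ takes values in $\coM$, that it is a morphism of $B$-bimodules (with $C$ regarded as a $B$-bimodule through $\ff$), that it is an algebra map, and that it intertwines the two coring structures in the sense of \eqref{prop-morph-corings}. Being simultaneously a coring morphism and an algebra map, $(\gamma,\ff)$ is then a morphism of bialgebroids. It is convenient to record at the outset that $\gamma$ is nothing but the restriction to $\coL$ of $(\pi_B \circ u_A) \ot (\pi_B \circ u_A)$, where $u_A : A \to C \ot^\ta \!A$, $a \mapsto 1_C \ot a$, is the algebra map of \eqref{projpi_alg}; so no new map must be constructed and only its properties need checking.

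First I would confirm that $\gamma(\coL)\subseteq\coM$, using the description of both bialgebroids as diagonal coinvariants (as in \eqref{ESbis}) together with the fact that the coaction on $\Omega$ is $\id_C \otb \delta$. The diagonal coaction sends $(1_C\otb a)\ot(1_C\otb a')$ to $(1_C\otb\zero{a})\ot(1_C\otb\zero{a'})\ot\one{a}\one{a'}$, which by the coinvariance $\zero{a}\ot\zero{a'}\ot\one{a}\one{a'}=a\ot a'\ot 1_H$ of $a\ot a'\in\coL$ equals $(1_C\otb a)\ot(1_C\otb a')\ot 1_H$, giving membership in $\coM$. For the bimodule property, left $B$-linearity $\gamma(ba\ot a')=\ff(b)\cdot\gamma(a\ot a')$ follows at once from the defining relation $1_C\otb ba=\ff(b)\otb a$ of $\pf$ together with \eqref{coM-Cbim}; right $B$-linearity $\gamma(a\ot a'b)=\gamma(a\ot a')\cdot\ff(b)$ follows by combining the right $C$-action of \eqref{coM-Cbim} with normality \eqref{nor-B} and the $B$-bimodule property \eqref{f-lin} of $\tap$, which together yield $\tap(a'\ot\ff(b))=1_C\otb a'b$. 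The algebra-map property is a direct check: forming the product of $\gamma(a\ot a')$ and $\gamma(d\ot d')$ inside $\coM\subseteq\Omega\ot\Omega^{op}$ and using $\taa{1_C}\otb\taa{a}=1_C\otb a$ (again \eqref{nor-B}) collapses each twisted product $(1_C\otb a)\dta(1_C\otb d)$ to $1_C\otb ad$, so that $\gamma(a\ot a')\,\gamma(d\ot d')=\gamma(ad\ot d'a')=\gamma\big((a\ot a')(d\ot d')\big)$; unitality is immediate.

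For the coring conditions I would treat the counit first: by \eqref{counit-M} with $c=c'=1_C$ and normality \eqref{nor-B}, one gets $\underline{\varepsilon}_\coM(\gamma(a\ot a'))=1_C\otb aa'$, which under $1_C\otb aa'=\ff(aa')\otb 1_A$ and $C\otb 1_A\simeq C$ equals $\ff(aa')=\ff(\underline{\varepsilon}_\coL(a\ot a'))$, i.e. the second identity in \eqref{prop-morph-corings}. For the comultiplication identity I would apply $\gamma\otb\gamma$ to $\underline{\Delta}_\coL(a\ot a')=(\zero{a}\ot\tone{\one{a}})\otb(\ttwo{\one{a}}\ot a')$ from \eqref{costr-L}, then pass through $\xi$ to replace $\otb$ by $\ot_C$, and compare with $\underline{\Delta}_\coM(\gamma(a\ot a'))$ computed from \eqref{comult-M} (equivalently from \eqref{costr-L} and the push-forward translation map \eqref{tau-pf}); since $\gamma$ only inserts $1_C$ in the first slot of each leg, the two expressions agree leg by leg in the Sweedler indices.

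The routine parts are the well-definedness and the counit intertwiner. The step most likely to require care is the right $B$-linearity together with the algebra-map property, since both rest on reducing twisted expressions of the form $(1_C\otb a)\dta(1_C\otb d)$ and evaluations $\tap(a\ot\ff(b))$ to untwisted ones; here one must invoke normality \eqref{nor-B} and the $B$-bimodule property \eqref{f-lin} in precisely the right order and keep track of the identifications $1_C\otb ba=\ff(b)\otb a$ and $C\otb 1_A\simeq C$. Once these reductions are in place the comultiplication identity is essentially a bookkeeping match of Sweedler legs, as the translation map \eqref{tau-pf} of the push-forward is built directly from that of $B\subset A$.
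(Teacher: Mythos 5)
Your proposal is correct and follows essentially the same route as the paper's proof: checking the $B$-bimodule property via the defining relation $1_C\otb ba=\ff(b)\otb a$, verifying the counit condition from \eqref{counit-M}, matching $\xi\circ(\gamma\otb\gamma)\circ\underline{\Delta}_\coL$ against $\underline{\Delta}_\coM\circ\gamma$ leg by leg, and deducing the algebra-map property from the normality \eqref{nor-B} of $\tap$. The only addition is your explicit check that $\gamma$ lands in $\coM$ via the coinvariance description \eqref{ESbis}, a step the paper leaves implicit but which is verified exactly as you describe.
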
 
\begin{proof}
The map $\gamma$ in \eqref{mappa-coring} is a $B$-bimodule map:
\begin{align*}
b(a \ot a') b' = ba \ot ab' & \stackrel{\gamma}{\longmapsto} (1_C \otb b a) \ot (1_C \otb a' b') \\
&= (\ff(b) \otb a) \ot 
(1_C \otb a' b') = b \big((1_C \otb a) \ot (1_C \otb a') \big)b'.
\end{align*}
We need to show that \eqref{prop-morph-corings} holds. 
The condition on the counit uses \eqref{counit-M}:
$$(\underline{\varepsilon}_\coM \circ \gamma) ( a \ot a')
= \underline{\varepsilon}_\coM \big((1_C \otb  a) \ot (1_C \otb a' )\big)
=   \ff(aa') \otb 1_C
= \ff \left( \underline{\varepsilon}_\coL ( a \ot a') \right)\otb 1_C .$$
For the comultiplication we compute 
\begin{align*} 
\xi \circ (\gamma \otb \gamma) \circ \underline{\Delta}_\coL  (a \ot a' ) 
&= \gamma \left(   \zero{a} \ot \tone{\one{a}}\right) \ot_C \gamma \left(\ttwo{\one{a}} \ot a' \right)
\\
& = (1_C \otb  \zero{a}) \ot (1_C \otb \tone{\one{a}}) \ot_C (1_C \otb \ttwo{\one{a}}) 
\ot (1_C \otb a') \\
&=\underline{\Delta}_\coM  \left( (1_C \otb a) \ot (1_C \otb a') \right) 
  \\
  &= 
 \underline{\Delta}_\coM  \left( \gamma  (a \ot a' ) \right) ,
 \end{align*}
 using \eqref{comult-M} for the last but one equality. 
 This shows that $\gamma$ in \eqref{mappa-coring} is a coring morphism.  It is also an algebra map since $\tap$ satisfies the condition \eqref{nor-B}. Thus $\gamma$ is a morphism of bialgebroids.
\end{proof}

Given the algebra map $\ff: B \to C$, this extends to an algebra map $  B \ot B^{op} \to  C \ot C^{op}$.  Out of the $C \ot C^{op}$-ring $\coM$   with source and target maps
$$
\ss: C \to \coM \,, \; c \mapsto (c \otb 1_A) \ot (1_C \otb 1_A) 
\quad \quad
\tt: C^{op} \to \coM \,, \; c \mapsto (1_C \otb 1_A)  \ot (c \otb 1_A) 
$$
we construct the $B \ot B^{op}$-ring $\widetilde{\coM}$. It coincides with $\coM$ as an algebra while it has  
  source and target maps
\begin{align*}
&\ss_{\widetilde{\coM}}: B \to \widetilde{\coM} \,, \quad b \mapsto (\ff(b) \otb 1_A) \ot (1_C \otb 1_A)  = (1_C \otb b) \ot (1_C \otb 1_A)
\\
&\tt_{\widetilde{\coM}}: B^{op} \to \widetilde{\coM} \,, \quad b \mapsto (1_C \otb 1_A)  \ot (\ff(b) \otb 1_A) =(1_C \otb 1_A)  \ot (1_C \otb b).
\end{align*}
Lemma \ref{lem:LM} leads to the following result.
\begin{prop}
The algebra map $\gamma$ in \eqref{mappa-coring}, seen as taking values in $\widetilde{\coM}$, 
$$
\gamma:  \coL \to \widetilde{\coM} \, , \quad (a \ot a') \mapsto (1_C \otb a) \ot (1_C \otb a') 
$$  
is a $B \ot B^{op}$-ring homomorphism, with $\ss_{\widetilde{\coM}}=\gamma \circ \ss_{\coL}$ and 
$\tt_{\widetilde{\coM}}=\gamma \circ \tt_{\coL}$. 

\end{prop}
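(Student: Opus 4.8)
The plan is to deduce this from the preceding lemma together with a direct matching of the ring structure maps on generators. By Lemma \ref{lem:LM} the map $\gamma$ is already a unital algebra morphism from $\coL$ to $\coM$, and since $\widetilde{\coM}$ coincides with $\coM$ as an algebra, $\gamma$ is automatically a unital algebra morphism $\coL \to \widetilde{\coM}$. Both $\coL$ and $\widetilde{\coM}$ are $B \ot B^{op}$-rings over the \emph{same} base algebra $B \ot B^{op}$, so a $B \ot B^{op}$-ring homomorphism between them is a unital algebra map compatible with the respective structure maps $\inc: B \ot B^{op} \to \coL$ and $\inc: B \ot B^{op} \to \widetilde{\coM}$. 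By the characterisation of such rings recalled above, this compatibility is equivalent to the two conditions $\gamma \circ \ss_{\coL} = \ss_{\widetilde{\coM}}$ and $\gamma \circ \tt_{\coL} = \tt_{\widetilde{\coM}}$, so only these two equalities remain to be verified.

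First I would recall the source and target maps of $\coL$, namely $\ss_{\coL}(b) = b \ot 1_A$ and $\tt_{\coL}(b) = 1_A \ot b$, and apply the explicit formula \eqref{mappa-coring} for $\gamma$. This gives $\gamma(\ss_{\coL}(b)) = (1_C \otb b) \ot (1_C \otb 1_A)$ and $\gamma(\tt_{\coL}(b)) = (1_C \otb 1_A) \ot (1_C \otb b)$ for all $b \in B$. On the other side, by the construction of the base ring extension one has $\ss_{\widetilde{\coM}} = \ss_{\coM} \circ \ff$ and $\tt_{\widetilde{\coM}} = \tt_{\coM} \circ \ff$, so that $\ss_{\widetilde{\coM}}(b) = (\ff(b) \otb 1_A) \ot (1_C \otb 1_A)$ and $\tt_{\widetilde{\coM}}(b) = (1_C \otb 1_A) \ot (\ff(b) \otb 1_A)$.

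The single point where the push-forward enters is the defining relation of the quotient $C \otb A$, that is $c\,\ff(b) \otb a = c \otb b a$; specialising to $c = 1_C$ and $a = 1_A$ yields $\ff(b) \otb 1_A = 1_C \otb b$ in $C \otb A$. Substituting this identity into the expressions for $\ss_{\widetilde{\coM}}(b)$ and $\tt_{\widetilde{\coM}}(b)$ makes them coincide with $\gamma(\ss_{\coL}(b))$ and $\gamma(\tt_{\coL}(b))$ respectively, which establishes the two required equalities and hence the proposition. I do not expect a genuine obstacle here: the only substantive input, namely that $\gamma$ is a unital algebra map, was already secured in Lemma \ref{lem:LM}, and the remaining content reduces to reading off the source and target maps on generators together with the bookkeeping identity $\ff(b) \otb 1_A = 1_C \otb b$ in the push-forward quotient.
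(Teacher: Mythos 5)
Your proposal is correct and follows essentially the same route as the paper: the paper likewise treats the algebra-map part as already settled by Lemma \ref{lem:LM} and reduces the proposition to matching $\ss_{\widetilde{\coM}}$, $\tt_{\widetilde{\coM}}$ with $\gamma\circ\ss_{\coL}$, $\gamma\circ\tt_{\coL}$ via the identity $\ff(b)\otb 1_A = 1_C\otb b$ in $C\otb A$, which the paper builds directly into its displayed formulas for the source and target maps of $\widetilde{\coM}$. No gaps.
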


 We next analyse the relationship between the $C$-corings $\coM$ and the base ring extension $\widetilde{\coL}$ of  $\coL$ obtained from the algebra map $\ff: B \to C$, see \eqref{tildeL}. The  $C$-coring $(\widetilde{\coL}, \underline{\widetilde{\Delta}}, \underline{\widetilde{\varepsilon}})$ is the $C$-bimodule 
\begin{align}\label{tildeL2}
\widetilde{\coL} & := C \otb \coL \otb C 
\\
& \:= \big\{ c \otb a \ot  a' \otb c' \in C \otb A\ot  A  \otb C\, | \; \zero{a} \ot  \tone{\one{a}} \otb \ttwo{\one{a}} a' =a \ot  a' \ot _B 1_A \big\}  \nn 
\end{align}
with comultiplication \eqref{com-tildeL} and counit \eqref{cou-tildeL}: 
\begin{align} \label{com-tildeL2}
& \hspace{-25pt}\underline{\Delta}_{\tilde\coL}:  \; 
c \otb a \ot a' \otb c' \; \mapsto 
\big(c \ot_B \zero{a} \ot  \tone{\one{a}} \otb  1_C \big) \ot_C \big(1_C \otb \ttwo{\one{a}} \ot a' \ot_B c'\big)
\\ \label{cou-tildeL2}
& \hspace{-25pt} \underline{ \varepsilon}_{\tilde\coL}:  \;   c \ot_B a \ot a' \ot_B c'  \mapsto 
 c \, \ff(aa') c' 
  \end{align}
(here $\ff(aa')$ is well-defined since $aa' \in B$ for $a \ot a' \in \coL$).

\begin{prop} 
Suppose the projection $\tap$ of the twisting map $\ta$ restricts to a well-defined map 
$\tap : A \otb C \to C \otb A$, that is \eqref{innerB} holds.
Then there exists a $C$-coring morphism  
$$
\widetilde{\gamma} = \id_{C \otb A} \ot \tap:  \widetilde\coL \to \coM , \quad
c \otb a \ot a' \otb c' \mapsto  c \otb a \ot \taa{c'} \otb \taa{a'} .
$$
Provided $\tap$ is invertible, the map $\widetilde{\gamma}$ is an isomorphism. 
\end{prop}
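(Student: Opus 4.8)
The plan is to check in turn that $\widetilde{\gamma}$ is well defined with image inside $\coM$, that it respects the $C$-coring structures (the two conditions \eqref{prop-morph-corings} with $\ff=\id_C$, together with $C$-bilinearity), and that it is bijective once $\tap$ is invertible.

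For the first point I would pass through the coinvariant descriptions of both corings. By \eqref{ESbis} the domain $\widetilde{\coL}=C\otb\coL\otb C$ is exactly the space of coinvariants of the ambient module $C\otb A\ot A\otb C\cong(C\otb A)\ot(A\otb C)$ for the diagonal $H$-coaction, where $A\otb C$ carries the coaction $a'\otb c'\mapsto\zero{a'}\otb c'\ot\one{a'}$, and likewise $\coM=(\Omega\ot\Omega)^{coH}$. Writing $\widetilde{\gamma}=\id_{C\otb A}\ot\tap$, the factor $\id_{C\otb A}$ is trivially an $H$-comodule map, while $\tap:A\otb C\to C\otb A$ is one precisely because $\ta$ is a right $H$-comodule morphism (Proposition \ref{prop:Hcas}). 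Hence $\widetilde{\gamma}$ intertwines the two diagonal coactions and restricts to a map on coinvariants $\widetilde{\coL}\to\coM$. The only balancing that is not automatic is that of $\coL\otb C$ over $B$: transporting $b\in B$ across the tensor sign turns $\tap(a'b\ot c')$ into $\tap(a'\ot\ff(b)c')$, and these coincide by the inner $B$-linearity \eqref{innerB}, which is exactly why this hypothesis is imposed.

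Next I would verify the structure conditions. Left $C$-linearity of $\widetilde{\gamma}$ is immediate, and right $C$-linearity follows from the distributive law $\tap(a\ot cc')=\taa{c}\,\tap(\taa{a}\ot c')$ compared with the right $C$-action \eqref{coM-Cbim} on $\coM$. For the comultiplication I would apply $\widetilde{\gamma}\ot_C\widetilde{\gamma}$ to \eqref{com-tildeL2} and use the right normality \eqref{nor-B} to evaluate $\tap(\tone{\one{a}}\ot 1_C)=1_C\otb\tone{\one{a}}$; the outcome then matches $\underline{\Delta}_\coM\circ\widetilde{\gamma}$ term by term through \eqref{comult-M}. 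The counit is cleanest via the multiplicative structure: since the Ehresmann--Schauenburg counit is the algebra multiplication, $\underline{\varepsilon}_\coM=m^\ta$, and factoring $c\otb a=u_C(c)\,u_A(a)$ and $\tap(a'\ot c')=u_A(a')\,u_C(c')$ in $\Omega$ with the algebra maps \eqref{projpi_alg}, one obtains
$$\underline{\varepsilon}_\coM\big(\widetilde{\gamma}(c\otb a\ot a'\otb c')\big)=u_C(c)\,u_A(aa')\,u_C(c').$$
Since $aa'\in B$ for $a\ot a'\in\coL$, one has $u_A(aa')=1_C\otb aa'=\ff(aa')\otb 1_A=u_C(\ff(aa'))$, so the right-hand side equals $c\,\ff(aa')\,c'=\underline{\varepsilon}_{\widetilde{\coL}}(c\otb a\ot a'\otb c')$ from \eqref{cou-tildeL2}.

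Finally, when $\tap$ is invertible, $\id_{C\otb A}\ot\tap$ is a linear bijection between the ambient modules $C\otb A\ot A\otb C$ and $(C\otb A)\ot(C\otb A)$, with inverse $\id_{C\otb A}\ot\tap^{-1}$, and the latter is again an $H$-comodule map since the inverse of an $H$-comodule isomorphism is one. Both therefore restrict to mutually inverse maps between the coinvariants $\widetilde{\coL}$ and $\coM$, so $\widetilde{\gamma}$ is bijective and, being a coring morphism, a coring isomorphism. I expect the main obstacle to be the bookkeeping of the several balanced tensor products (over $B$ and over $C$) and the nested applications of $\ta$; organizing everything through the coinvariant descriptions and the factorization $c\otb a=u_C(c)u_A(a)$ is what prevents these computations from becoming unwieldy and pins down the single essential use of \eqref{innerB}.
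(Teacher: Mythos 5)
Your proof is correct in substance but reaches the key conclusion by a genuinely different route than the paper. Where the paper verifies directly that $\widetilde{\gamma}(c \otb a \ot a' \otb c')$ satisfies the translation-map condition \eqref{eq2} defining $\coM$ --- by applying $\id_{C\otb A}\otb\ta$ to the defining relation \eqref{eq1} of $\widetilde\coL$ and then manipulating with \eqref{cta=ac} and the distributive law $\tap(aa'\ot c)=\taadue{\taa{c}}\otb\taadue{a}\taa{a'}$ --- you instead invoke the coinvariance descriptions $\coL\simeq(A\ot A)^{coH}$ and $\coM=(\Omega\ot\Omega)^{coH}$ and observe that $\id_{C\otb A}\ot\tap$ intertwines the diagonal coactions because $\tap$ is an $H$-comodule morphism, so coinvariants go to coinvariants. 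This is cleaner and isolates the single essential use of \eqref{innerB} (well-definedness across the right $B$-balancing), which the paper leaves implicit in the hypothesis. Your counit verification via the factorization $c\otb a=u_C(c)\,u_A(a)$, $\tap(a'\ot c')=u_A(a')\dta u_C(c')$ and $u_A(aa')=u_C(\ff(aa'))$ for $aa'\in B$ is likewise a nice shortcut replacing the paper's direct computation with the distributive laws. The one place where your argument is less self-contained than it appears is the isomorphism claim: to conclude that $(\id_{C\otb A}\ot\tap^{-1})$ carries $\coM$ back into $\widetilde\coL$ you need the \emph{reverse} inclusion $(C\otb A\ot A\otb C)^{coH}\subseteq C\otb\coL\otb C$, i.e.\ that the coinvariants of the ambient module are exactly $\widetilde\coL$ and not merely contain it; this requires an extra flatness-type argument (or running the translation-map computation backwards, which the paper's direct approach permits step by step). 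Since the paper itself disposes of the invertibility claim in one sentence, this is a shared tersenesss rather than an error on your part, but it is the point you should flesh out if you want the coinvariance route to be fully rigorous.
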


\begin{proof}
We   show that   $\widetilde{\gamma}$ takes values in $\coM$ that is, for all $c \otb a \ot  a' \otb c' \in \widetilde\coL$, 
\begin{align}\label{eq2}
(c \otb \zero{a}) \ot (1_C \ot_B \tone{\one{a}})  \ot_C 
(\taadue{\taa{c'}} & \otb \taadue{ (\ttwo{\one{a}})} \taa{a'})   
\\
&= (c \otb a) \ot (\taa{c'} \otb \taa{a'}) \ot _C (1_C \otb 1_A) .  \nn 
\end{align}
If $c \otb a \ot  a' \otb c' \in \widetilde\coL$, from the definition \eqref{tildeL2} of $\widetilde{L}$ one has \beq\label{eq1}
c \ot  \zero{a} \ot 1_C  \ot \tone{\one{a}} \otb \ttwo{\one{a}} a' \ot c' = c \ot a \ot 1_C \ot  a' \otb 1_A \ot c'.
\eeq
We apply $\id_{C \otb A} \otb \ta$ to the above equation, observing that it maps $(C \otb A) \otb (A \otb C)$ to 
$(C \otb A) \ot_C (C \otb A)$
(the map  $\id_{C \otb A} \ot \ta$ restricts to a well-defined map  on $C \otb A \otb A \otb C$ only when considered to take value in the quotient
$(C \otb A) \ot_C (C \otb A)$):
\begin{align*}
(c \otb  \zero{a}) \ot (1_C  \otb \tone{\one{a}}) \ot_C \! (\taa{c'} \otb \taa{(\ttwo{\one{a}} a')} ) 
&  = (c \otb a) \ot (1_C \otb  a') \ot_C (c' \otb 1_A) \\
& \hspace{-1cm} = (c \otb a) \ot (\taa{c'} \otb \taa{a'}) \ot _C (1_C \otb 1_A)
\end{align*}
using \eqref{cta=ac}  for the last equality. Then we obtain equality \eqref{eq2} once using the property 
$\tap(aa' \ot c) =  \taadue{\taa{c}} \otb \taadue{a}\taa{a'}$,
which follows from the analogous property $\tap(a \ot cc') = \taa{c}  \tap(\taa{a} \ot c')$ assumed on $\tap$
(see discussion in \S \ref{sec:twist} after equation \eqref{nor}).

We next show that  
$\underline{\Delta}_\coM \circ \widetilde{\gamma}=(\widetilde{\gamma} \ot_C \gamma)\circ \underline{\Delta}_{\tilde\coL}$ and
$\underline{\varepsilon}_\coM \circ \widetilde{\gamma}=\underline{\varepsilon}_{\tilde\coL}$.
From the expression of $\underline{\Delta}_{\tilde\coL}$ in \eqref{com-tildeL2} and of 
$\underline{\Delta}_\coM$ in \eqref{comult-M}, 
using $\tap(a \otb 1_C)=1_C \otb a$, for $a\in A$, we compute
 \begin{align*}
 ( \widetilde{\gamma} \ot_C \widetilde{\gamma})\big( \underline{\Delta}_{\tilde\coL} (c \otb a & \ot a' \otb c') \big) \\
 &= \big(c \ot_B \zero{a} \ot 1_C   \otb   \tone{\one{a}} \big) \ot_C \big(1_C \otb \ttwo{\one{a}} \ot \taa{c'} \ot_B \taa{a'}\big)
 \\
 &= \underline{\Delta}_\coM (c \otb a \ot  \taa{c'} \ot_B \taa{a'} ) 
 \\
 &= \underline{\Delta}_\coM \left(\widetilde{\gamma}(c \otb a \ot a' \otb c') \right). 
 \end{align*} 
 From the expression of 
$\underline{\varepsilon}_\coM$ in \eqref{counit-M},    we compute
 \begin{align*}
  \underline{\varepsilon}_{\coM} \big( \widetilde{\gamma} (c \otb a \ot a' \otb c') \big) 
 &= 
  \underline{\varepsilon}_{\coM} \big(  c \otb a \ot \taa{c'} \otb \taa{a'} \big)
 =   c \taadue{\taa{c'}} \ff \big( \taadue{a} \taa{a'} \big)  \ot_B 1_A
  \\
 &=   c  \taa{c'} \ff \big( \taa{(a a')} \big)   \ot_B 1_A = c \, \ff(aa') c'    \ot_B 1_A
 \end{align*} 
 where for the last but one equality we used the property  $\tap(aa' \ot c) =  \taadue{\taa{c}} \otb \taadue{a}\taa{a'}$
 for $a, a' \in A$ and $c\in C$ and for the last equality that
 $\tap(b \ot c) = \ff(b)c \otb 1_A$ for $c \in C$ and $b \in B$, and in particular for $b=aa' \in B$.  
 Then $\underline{\varepsilon}_{\coM} \big( \widetilde{\gamma} (c \otb a \ot a' \otb c') \big)$ coincides with  
 $ \underline{ \varepsilon}_{\tilde\coL} (c \ot_B a \ot a' \ot_B c' )$
from \eqref{cou-tildeL2} under the identification $C \otb 1_A \simeq C$.

When $\ta$ is invertible, then $\widetilde{\gamma}$ is an isomorphism.
\end{proof}

 \appendix 
 \section{On extensions of modules}\label{appA}
 
Given two rings $B$ and $\tb$ and a unital ring homomorphism $\ff :B \to \tb$,  
any right   (say) $\tb$-module $\mathcal{V}$ becomes a right $B$-module by setting $ v  \cdot b:=  v \cdot   \ff (b)$, for $b\in B$ and $v \in \mathcal{V}$.  

In particular $\tb$ itself may be treated as a right $B$-module.
Then, given a left $B$-module $\mathcal{E}$, its covariant $\ff$-extension is the left $\tb$-module
\beq\label{ff-ext}
\mathcal{E}_{(\ff)} :=  \tb \ot_{B}  \mathcal{E}  
\eeq
(see \cite[Ch. II \S 6]{CA56}). We write $\tilde{b} \ot_B \xi $ for (the class of) an element in $\mathcal{E}_{(\ff)}$.
If $\mathcal{E}$ is faithfully flat as left $B$-module, then  $\mathcal{E}_{(\ff)} =  \tb \ot_{B}  \mathcal{E}$  
is faithfully flat as left $\tb$-module (see \cite[Prop. 5 Ch.I \S 3]{bour}).

Moreover, from \cite[Ch. I Prop. 6.1]{CA56}, if the $B$-module $\mathcal{E}$ is  projective, then the $\tb$-module $\mathcal{E}_{(\ff)} $ is projective. 
Finally, if $\mathcal{E}$ is finitely generated with generating set $\{\xi_j\}$, then $\mathcal{E}_{(\ff)} $ is finitely generated with generating set $\{1_{\tb} \ot_B \xi_j  \}$. 
Thus,  if $\mathcal{E}$ is both finitely generated and projective, in short finite projective, 
so is its covariant extension. 

\begin{lem}\label{lem:proj}
Suppose the left $B$-module $\mathcal{E}$ is finite projective with $\mathcal{E} \simeq B^N p$, for an idempotent $p \in \M_N(B)$, and $N \in \N$. Then 
$\mathcal{E}_{(\ff)}\simeq  {\tb}^N \ff(p)$ for the idempotent
 $\ff(p) \in \M_N(\tb)$.
 \end{lem}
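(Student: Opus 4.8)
The plan is to reduce everything to the free case and then transport an idempotent. First I would record the natural isomorphism of left $\tb$-modules
$$
\phi: \tb \ot_B B^N \to \tb^N, \quad \tilde b \ot_B (v_1,\dots,v_N) \mapsto (\tilde b\,\ff(v_1),\dots,\tilde b\,\ff(v_N)),
$$
which is well defined because the right $B$-action on $\tb$ is given through $\ff$, so the balancing relation $\tilde b\,\ff(b)\ot_B v = \tilde b \ot_B b v$ is respected, and which is invertible with inverse sending the standard basis vector $e_j$ to $1_{\tb} \ot_B e_j$. This is just the base-change isomorphism for the free module $B^N$, and it is the assertion of the lemma in the special case $p=\id$.

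Next I would view the idempotent $p \in \M_N(B)$ as the left $B$-linear endomorphism $R_p$ of $B^N$ given by right multiplication, $v \mapsto vp$. Since $R_p^2 = R_p$, there is an internal direct-sum decomposition $B^N = B^N p \oplus B^N(1-p)$ of left $B$-modules, with $B^N p = \mathrm{Im}(R_p) \simeq \mathcal{E}$. Applying the additive functor $\tb \ot_B(-)$, which preserves finite direct sums, gives $\tb \ot_B B^N = \bigl(\tb \ot_B B^N p\bigr) \oplus \bigl(\tb \ot_B B^N(1-p)\bigr)$ and identifies $\tb \ot_B B^N p$ with the image of the idempotent $\id_{\tb} \ot_B R_p$.

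The key step, and the one to carry out with care, is to check that $\phi$ intertwines $\id_{\tb} \ot_B R_p$ with right multiplication $R_{\ff(p)}$ by the entrywise image $\ff(p) \in \M_N(\tb)$ on $\tb^N$, that is $\phi \circ (\id_{\tb} \ot_B R_p) = R_{\ff(p)} \circ \phi$. This is the short computation $\phi(\tilde b \ot_B vp) = (\tilde b\,\ff((vp)_j))_j = (\sum_i \tilde b\,\ff(v_i)\ff(p_{ij}))_j = \phi(\tilde b\ot_B v)\,\ff(p)$, using that $\ff$ is a ring homomorphism. The same fact shows $\ff(p)^2 = \ff(p^2) = \ff(p)$, so $\ff(p)$ is genuinely an idempotent in $\M_N(\tb)$.

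Finally I would combine these: $\phi$ restricts to an isomorphism from the image of $\id_{\tb} \ot_B R_p$ onto the image of $R_{\ff(p)}$, yielding
$$
\mathcal{E}_{(\ff)} = \tb \ot_B \mathcal{E} \simeq \tb \ot_B B^N p \simeq \tb^N \ff(p)
$$
as left $\tb$-modules, which is the claim. I expect no real obstacle beyond the intertwining identity; the only point needing attention is to keep the left/right conventions straight ($B^N$ as row vectors, left $B$-action componentwise, $p$ acting on the right), so that $\ff(p)$ appears on the correct side and the balancing of the tensor product over $B$ is used correctly.
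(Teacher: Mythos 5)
Your proof is correct. It arrives at the same isomorphism as the paper -- your $\phi$ restricted to $\tb\ot_B B^Np$ sends $1_{\tb}\ot_B\xi$ to $\ff(\xi)=\ff(\xi p)=\ff(\xi)\ff(p)$, which is exactly the map $\lambda$ in the paper's proof -- but by a genuinely different route. The paper defines $\lambda$ directly on $\tb\ot_B B^Np$, checks the balancing relation by hand, and then exhibits an explicit inverse $\tilde\xi\mapsto\sum_k\tilde\xi_k\ot_B p_{k\,\cdot}$, verifying both composites by computation with the matrix entries of $p$. You instead first establish the base-change isomorphism $\tb\ot_B B^N\simeq\tb^N$ for the free module, then use the split decomposition $B^N=B^Np\oplus B^N(1-p)$ together with the fact that the additive functor $\tb\ot_B(-)$ preserves split idempotents, and finally transport $R_p$ to $R_{\ff(p)}$ through the intertwining identity. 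Your argument is more conceptual and dispenses with writing down the inverse explicitly (the only point where tensoring could fail to preserve an image is handled by the splitting, as you note); the paper's argument is more elementary and self-contained, and its explicit inverse formula is reused implicitly in the subsequent frame computations of Lemma \ref{lem:smf}. Both are complete proofs.
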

 \begin{proof}
The map $\ff: B \to \tb$ is extended componentwise to  $B^N$.
We define the map 
$\lambda:  \tb \ot  B^N   p \to  {\tb}^N \ff(p)$ by setting
  $1_{\tb} \ot \xi    \mapsto  \ff (\xi)\ff(p)$ and   requiring it to be a left $\tb$-module map. 
Being the ring map $\ff$ multiplicative we have 
$$
1_{\tb} \ot b \xi  - \ff(b) \ot \xi   \mapsto  \ff (b \xi )\ff(p) - \ff(b) \ff (\xi)  \ff(p)=0.
$$ 
Thus the map descends to a well-defined $\tb$-module map
$\lambda: \tb \ot_{B}  B^N   p \to  {\tb}^N \ff(p)$.

Conversely, given $\tilde{\xi}= (\tilde{\xi}_k) \in {\tb}^N  \ff(p)$, with then $\sum_{k=1}^N   \tilde{\xi}_k  \ff (p_{kj})= \tilde{\xi}_j$, we associate to it  the element
$\lambda^{-1}(\tilde{\xi}):=\sum_{k=1}^N  \tilde{\xi}_k \ot_B p_{kj}  \in   \tb \ot_B B^N p$.
We have 
$$
\lambda (\sum_{k=1}^N \tilde{\xi}_k \ot_B p_{kj} )
= \sum_{k=1}^N  \tilde{\xi}_k \ff (p_{kj})  \ff(p)
= \tilde{\xi}\ff(p)   =\tilde{\xi}
$$
and for $\xi =(\xi_k) \in B^N p$, we have 
$$
\lambda^{-1} (\ff (\xi) \ff(p) )
=\sum_{k,l=1}^N \ff (\xi_l) \ff(p_{l k})  \ot_B p_{k j} 
=\sum_{k,l=1}^N    1_{\tb} \ot_B  \xi_l  p_{l k} p_{kj} 
=1_{\tb} \ot_B \xi  .
$$
Thus, $\mathcal{E}_{(\ff)}= \tb \ot_{B}  \mathcal{E}  \simeq  {\tb}^N \ff(p)$ as claimed.
 \end{proof}
  
 In the following we consider $B$ and $\tb$ to be unital $*$-algebras. Suppose $\mathcal{E}$ is a left $B$-module equipped with a $B$-valued inner product $\inner{\cdot}{\cdot}_B$ and with a standard module frame $\{ \eta_j \}_{j=1, \dots , N}$. That is (see \cite[Defin. 7.1]{Rff08}) there exists a finite family
 $\{ \eta_j \}$ of elements of $\mathcal{E}$ such that for any $\xi \in \mathcal{E}$  one has the reconstruction formula 
 \beq
 \xi = \sum_{j=1}^N  \inner{\xi}{\eta_j}_B \eta_j .
 \eeq
 Then, the $B$-module $\mathcal{E}$ is finite projective with $\mathcal{E} \simeq  B^N p$, where $p \in M_N(B)$ is the projection of components $p_{jk}:=\inner{\eta_j}{\eta_k}_B$.  
 Conversely (see \cite[\S 7]{Rff08}), any finite projective $B$-module $\mathcal{E}$ has a standard module frame.  If  $\mathcal{E} \simeq B^N p$ for some $p \in M_N(B)$, the restriction to  $\mathcal{E}$ of the inner product on $B^N$, $\inner{(b_j)}{(b'_j)}_B = \sum b_j^* b'_j$, is an inner product on $\mathcal{E}$ and $\{\eta_j:= \sum_k e_k p_{kj}\}$ is a standard module frame for $\{e_j\}$ the natural basis of the free module $B^N$.

 \begin{lem}\label{lem:smf}
Suppose $\mathcal{E}$ is a  left  $B$-module equipped with a $B$-valued inner product $\inner{\cdot}{\cdot}_B$ and with a standard module frame $\{ \eta_j \}_{j=1, \dots , N}$. Let $\ff: B \to \tb$ be a $*$-algebra map. Then 
\beq\label{in-pr-ex}
\inner{\tilde{b} \ot \xi }{\tilde{c} \ot \eta }_{\tb} := \tilde{b} \, \ff \big( \inner{\xi  }{\eta  }_{B}\big)\,  \tilde{c}^* \, , \quad 
\xi, \eta \in \mathcal{E},\; \tilde{b},\tilde{c} \in \tb
\eeq
is an inner product on $\mathcal{E} \ot \tb$ which
descends to a well-defined inner product on the extended $\tb$-module
$\mathcal{E}_{(\ff)}$  and $\{1_{\tb} \ot_B \eta_j  \}_{j=1, \dots , N}$ is a standard module frame for $\mathcal{E}_{(\ff)}$. 
 \end{lem}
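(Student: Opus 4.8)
The plan is to verify the three assertions in \eqref{in-pr-ex} in turn: that the formula defines a $\tb$-valued Hermitian form on $\tb\ot\mathcal{E}$, that it descends to the balanced tensor product $\mathcal{E}_{(\ff)}=\tb\otb\mathcal{E}$, and that $\{1_{\tb}\otb\eta_j\}$ satisfies the frame reconstruction identity. Throughout I will use only three facts: that $\inner{\cdot}{\cdot}_B$ is $B$-linear in the reconstruction slot, i.e. $\inner{b\xi}{\eta}_B=b\,\inner{\xi}{\eta}_B$; that it is Hermitian, $\inner{\xi}{\eta}_B^*=\inner{\eta}{\xi}_B$; and that $\ff$ is a unital $*$-algebra morphism, so it is multiplicative and satisfies $\ff(b^*)=\ff(b)^*$. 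Left $\tb$-linearity of \eqref{in-pr-ex} in the first slot is immediate from $\tilde{d}(\tilde{b}\ot\xi)=\tilde{d}\tilde{b}\ot\xi$ and associativity in $\tb$, while the Hermitian property is a one-line computation:
\begin{align*}
\inner{\tilde{b}\ot\xi}{\tilde{c}\ot\eta}_{\tb}^{\,*}
&= \big(\tilde{b}\,\ff(\inner{\xi}{\eta}_B)\,\tilde{c}^*\big)^*
= \tilde{c}\,\ff(\inner{\xi}{\eta}_B)^*\,\tilde{b}^* \\
&= \tilde{c}\,\ff(\inner{\xi}{\eta}_B^*)\,\tilde{b}^*
= \tilde{c}\,\ff(\inner{\eta}{\xi}_B)\,\tilde{b}^*
= \inner{\tilde{c}\ot\eta}{\tilde{b}\ot\xi}_{\tb}.
\end{align*}

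The crux of the argument is well-definedness on $\mathcal{E}_{(\ff)}=\tb\otb\mathcal{E}$, that is, invariance under the balancing relation $\tilde{b}\,\ff(b)\ot\xi=\tilde{b}\ot b\xi$ for $b\in B$ (recall $\tb$ is a right $B$-module via $\tilde{b}\cdot b=\tilde{b}\,\ff(b)$). For the first slot one has $\inner{\tilde{b}\,\ff(b)\ot\xi}{\tilde{c}\ot\eta}_{\tb}=\tilde{b}\,\ff\big(b\,\inner{\xi}{\eta}_B\big)\,\tilde{c}^*$ by multiplicativity of $\ff$, which equals $\tilde{b}\,\ff\big(\inner{b\xi}{\eta}_B\big)\,\tilde{c}^*=\inner{\tilde{b}\ot b\xi}{\tilde{c}\ot\eta}_{\tb}$ by $B$-linearity. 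For the second slot, $\inner{\tilde{b}\ot\xi}{\tilde{c}\,\ff(b)\ot\eta}_{\tb}=\tilde{b}\,\ff(\inner{\xi}{\eta}_B)\,\ff(b^*)\,\tilde{c}^*=\tilde{b}\,\ff\big(\inner{\xi}{\eta}_B\,b^*\big)\,\tilde{c}^*$, and the identity $\inner{\xi}{\eta}_B\,b^*=\inner{\xi}{b\eta}_B$ (obtained from Hermiticity and $B$-linearity) shows this equals $\inner{\tilde{b}\ot\xi}{\tilde{c}\ot b\eta}_{\tb}$. Hence \eqref{in-pr-ex} passes to the quotient in both arguments.

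Finally I would establish the reconstruction formula. For a generator $\tilde{b}\otb\xi$ of $\mathcal{E}_{(\ff)}$,
\begin{align*}
\sum_{j=1}^N \inner{\tilde{b}\otb\xi}{1_{\tb}\otb\eta_j}_{\tb}\,(1_{\tb}\otb\eta_j)
&= \sum_{j=1}^N \tilde{b}\,\ff\big(\inner{\xi}{\eta_j}_B\big)\otb\eta_j \\
&= \sum_{j=1}^N \tilde{b}\otb \inner{\xi}{\eta_j}_B\,\eta_j
= \tilde{b}\otb\xi,
\end{align*}
where the middle equality pulls $\ff(\inner{\xi}{\eta_j}_B)$ across $\otb$ using the balancing relation, and the last equality is the reconstruction formula for the frame $\{\eta_j\}$ of $\mathcal{E}$. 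By $\tb$-linearity this extends to all of $\mathcal{E}_{(\ff)}$, so $\{1_{\tb}\otb\eta_j\}$ is a standard module frame in the sense of \cite[\S7]{Rff08}.

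I expect the well-definedness step to be the main obstacle: it is the only place where the handedness of the $B$-action, the Hermitian symmetry of $\inner{\cdot}{\cdot}_B$, and the $*$-compatibility $\ff(b^*)=\ff(b)^*$ must all align with the balancing relation of $\tb\otb\mathcal{E}$. The remaining point, positivity of the form (so that it is genuinely an inner product and not merely a Hermitian pairing), is cleanest to obtain via Lemma \ref{lem:proj}: under the isomorphism $\mathcal{E}_{(\ff)}\simeq\tb^N\ff(p)$ the form \eqref{in-pr-ex} is identified with the restriction to $\tb^N\ff(p)$ of the standard $\tb$-valued inner product on the free module $\tb^N$, which is manifestly positive, and $\{1_{\tb}\otb\eta_j\}$ corresponds to the canonical frame attached to the self-adjoint idempotent $\ff(p)$. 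Equivalently, positivity can be checked directly: for $\zeta=\sum_i\tilde{b}_i\otb\xi_i$ the Gram matrix $\big(\inner{\xi_i}{\xi_{i'}}_B\big)$ is positive over $B$, its image under the entrywise $*$-homomorphism induced by $\ff$ is positive over $\tb$, and conjugation by the row $(\tilde{b}_i)$ preserves positivity.
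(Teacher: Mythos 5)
Your proposal is correct and follows essentially the same route as the paper's proof: well-definedness on the balanced tensor product via multiplicativity and $*$-compatibility of $\ff$ together with the $B$-bimodule behaviour of $\inner{\cdot}{\cdot}_B$, followed by the frame reconstruction computation $\sum_j \tilde{b}\,\ff(\inner{\xi}{\eta_j}_B)\ot_B\eta_j=\tilde{b}\ot_B\xi$. The only difference is that you spell out Hermitian symmetry and positivity explicitly (the latter via Lemma \ref{lem:proj} or the Gram-matrix argument), whereas the paper dispatches these with a one-line appeal to the corresponding properties of $\inner{\cdot}{\cdot}_B$ and the fact that $\ff$ is a $*$-map.
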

 \begin{proof}
 To prove that \eqref{in-pr-ex} satisfies the properties of an inner product one uses  the analogous properties of $\inner{\cdot}{\cdot}_B$ and the fact that $\ff$ is a $*$-map.
 We show that \eqref{in-pr-ex} descends to a well-defined map on  $\mathcal{E}_{(\ff)}$:
 for $\xi, \eta \in \mathcal{E}$, $\tilde{b},\tilde{c} \in \tb$ and $b,c \in B$, one has:
\begin{align*}
 \inner{ \tilde{b} \ot  b \xi }{  \tilde{c}\ot  c \eta }_{\tb} 
 &= \tilde{b} \, \ff \big( \inner{b \xi  }{c\eta  }_{B}\big) \tilde{c}^* 
  = \tilde{b} \ff \big( b \inner{\xi}{\eta }_{B} c^* \big) \tilde{c}^* 
 = \tilde{b} \,\ff ( b) \ff \big(  \inner{\xi}{\eta }_{B}   \big)  \ff  (c^*) \tilde{c}^* 
 \\
 & =\tilde{b} \, \ff ( b)  \ff \big(  \inner{\xi}{\eta }_{B}   \big)  (\tilde{c}\, \ff  (c))^* 
 =  \inner{\tilde{b} \, \ff ( b) \ot  \xi    }{\tilde{c} \, \ff(c) \ot  \eta  }_{\tb} .
\end{align*}
 For each $\tilde{b} \ot_B \xi   \in \tb \ot_B \mathcal{E}  $,  
\begin{align*}
\tilde{b} \ot_B \xi
 &= \sum_{j=1}^N \tilde{b} \ot_B  \inner{\xi}{\eta_j}_B \eta_j
 = \sum_{j=1}^N \tilde{b}  \, \ff \big( \inner{\xi}{\eta_j}_B \big)  \ot_B  \eta_j 
 = \sum_{j=1}^N \tilde{b}  \, \ff \big( \inner{\xi}{\eta_j}_B \big) \left(  1_{\tb} \ot_B \eta_j \right) 
 \\
 & = \sum_{j=1}^N  \inner{\tilde{b}  \ot_B \xi }{1_{\tb} \ot_B \eta_j }_{\tb} \left(1_{\tb} \ot_B \eta_j \right) 
\end{align*}
showing that $\{ 1_{\tb} \ot_B \eta_j\}_{j=1, \dots , N}$ is a standard module frame for $\mathcal{E}_{(\ff)}$. 
 \end{proof}
 In agreement with Lemma \ref{lem:proj} we have the following.
 \begin{cor}
 In the hypothesis of Lemma \ref{lem:smf}, the module  $\mathcal{E}_{(\ff)}$ is projective with 
$\mathcal{E}_{(\ff)} \simeq  {\tb}^N \tilde{p}$, where $\tilde{p} \in M_N(\tb)$ is the projection of components  
$$
\tilde{p}_{jk}:=\inner{1_{\tb} \ot_B \eta_j}{1_{\tb} \ot_B \eta_k}_{\tb} = \ff \big( \inner{\eta_j}{\eta_k}_B \big).
$$
 \end{cor}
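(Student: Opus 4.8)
The plan is to apply the general projectivity criterion for modules carrying a module-valued inner product and a standard module frame---recalled in the excerpt immediately before Lemma~\ref{lem:smf}---directly to the extended module $\mathcal{E}_{(\ff)}$. That criterion asserts that a left module over a $*$-algebra equipped with a standard module frame $\{\zeta_j\}_{j=1,\dots,N}$ is finite projective and isomorphic to ${\tb}^N \tilde p$, where $\tilde p \in M_N(\tb)$ is the projection with entries $\tilde p_{jk}=\inner{\zeta_j}{\zeta_k}_{\tb}$.

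First I would invoke Lemma~\ref{lem:smf}, which supplies exactly the data needed to run this criterion on $\mathcal{E}_{(\ff)}$: the $\tb$-valued inner product \eqref{in-pr-ex}, together with the standard module frame $\{1_{\tb}\ot_B \eta_j\}_{j=1,\dots,N}$. Applying the criterion with $\zeta_j=1_{\tb}\ot_B\eta_j$ then gives $\mathcal{E}_{(\ff)}\simeq {\tb}^N \tilde p$ with $\tilde p_{jk}=\inner{1_{\tb}\ot_B\eta_j}{1_{\tb}\ot_B\eta_k}_{\tb}$.

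It remains only to identify these entries. Evaluating \eqref{in-pr-ex} on the frame elements with $\tilde b=\tilde c=1_{\tb}$ and using that $\ff$ is unital yields
$$\tilde p_{jk}=1_{\tb}\,\ff\big(\inner{\eta_j}{\eta_k}_B\big)\,1_{\tb}^*=\ff\big(\inner{\eta_j}{\eta_k}_B\big)=\ff(p_{jk}),$$
so that $\tilde p=\ff(p)$ entrywise, which is the assertion of the corollary.

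Since this computation is routine, there is no genuine obstacle: the substance of the result was already secured in Lemma~\ref{lem:smf}, namely that $\{1_{\tb}\ot_B\eta_j\}$ is a standard module frame for $\mathcal{E}_{(\ff)}$. The only point worth a word is the consistency with Lemma~\ref{lem:proj}: because $\ff$ is a $*$-algebra homomorphism, applying it entrywise sends the projection $p=(\inner{\eta_j}{\eta_k}_B)\in M_N(B)$ to the projection $\ff(p)=\tilde p\in M_N(\tb)$, preserving both $p^2=p$ and $p^*=p$, so the two descriptions $\mathcal{E}_{(\ff)}\simeq{\tb}^N\ff(p)$ and $\mathcal{E}_{(\ff)}\simeq{\tb}^N\tilde p$ agree.
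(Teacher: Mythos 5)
Your proposal is correct and follows exactly the route the paper intends: the corollary is stated without proof precisely because it is an immediate application of the frame-to-projection criterion (recalled just before Lemma \ref{lem:smf}) to the standard module frame $\{1_{\tb}\ot_B\eta_j\}$ produced by that lemma, and your identification $\tilde p_{jk}=\ff(p_{jk})$ matches the paper's remark that the result is ``in agreement with Lemma \ref{lem:proj}.''
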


\section{On strong connections and $2$-cocycle deformations}\label{app:B}

This  Appendix is devoted to show that the property \eqref{prop-ell} of the strong connection of an $H$-Galois extension is preserved when the Hopf--Galois extension is deformed via a $2$-cocycle on the Hopf algebra $H$. 

From the general theory (due to Doi and Drinfeld), given a  $2$-cocycle $\cot: H \ot H \to \kk$ on a Hopf algebra $H$, the  latter can be deformed to a new Hopf algebra $\hg$.  This  
 consists of the coalgebra $H$ with associative product 
\beq\label{hopf-twist}
m_{\cot} (h \ot k):= h \mt k:= \co{\one{h}}{\one{k}} \,\two{h}\two{k}\, \coin{\three{h}}{\three{k}} \, , \quad h,k\in H \, 
\eeq
and   antipode  $S_\cot:= u_\cot *S *\bar{u}_\cot$.  Here
\beq\label{uxS}
u_\cot:  H\longrightarrow \kk \, , \; h\longmapsto \co{\one{h}}{S(\two{h})}  \, ,\quad 
\bar{u}_\cot: H\longrightarrow \kk \,, \; h \longmapsto \coin{S(\one{h})}{\two{h}} 
\eeq
are convolution inverse of each other. 
Moreover each (say) right $H$-comodule algebra $(A, \delta)$, with  coaction $\delta: A \to A \ot H$, is deformed to an 
$\hg$-comodule algebra $(A_\cot, \delta_\cot)$ given by the $\kk$-module $A$ with unchanged unit $\eta_\cot := \eta$  and deformed product  
 \beq\label{rmod-twist} 
 m_\cot : A_\cot \ot A_\cot \longrightarrow A_\cot \,, \;a\ot  a' \longmapsto \zero{a} \zero{a'} \,\coin{\one{a}}{\one{a'}} =: a \mtco a' \,.
 \eeq
 The coaction $\delta_\cot := \delta$ is unchanged, as a linear map, and $A^{coH}=A_\cot^{co \hg}$.
 
A categorical approach to $2$-cocycle deformations of Hopf--Galois extensions was developed in \cite{ppca}; we refer to it for details.
As shown in 
  \cite[Corollary 3.7]{ppca},   the extension $B:=A^{coH} \subseteq A$ is $H$-Galois if and only if $B \subseteq A_\cot$ is $H_\gamma$-Galois.   Moreover $B \subseteq A$ is faithfully flat if and only if $B \subseteq A_\cot$ is such.
This was proved in \cite[Corollary 3.9]{ppca} by exhibiting a strong connection for the deformed Hopf--Galois extension, given as a splitting $s_\cot: A_\cot \to B \ot A_\cot$ of the multiplication map. The expression for the corresponding map $\ell_\cot$ associated with $s_\cot$ was given in \cite{JZ}: for $\ell: H \to A \ot A$ the strong connection of the original $H$-Galois extension, 
\beq\label{ell-cot}
\ell_\cot: \hg \to A_\cot \ot A_\cot \, , \quad h \mapsto u_\cot(\one{h}) \ell(\two{h}).
\eeq

\begin{lem}
Let $B \subset A$ be a faithfully flat $H$-Galois extension admitting a strong connection $\ell$ with property \eqref{prop-ell}.
Then the strong connection $\ell_\cot$  of the deformed extension in \eqref{ell-cot} has the same property. 
\end{lem}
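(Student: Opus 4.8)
The plan is to read property \eqref{prop-ell} as the statement that $\ell\colon H\to A^{\mathrm{op}}\ot A$ is an algebra homomorphism, where $A^{\mathrm{op}}\ot A$ carries the product $(a\ot a')(b\ot b')=ba\ot a'b'$; the assertion to be proved is then its deformed counterpart, namely that $\ell_\gamma$ is an algebra homomorphism $H_\gamma\to A_\gamma^{\mathrm{op}}\ot A_\gamma$, i.e.
\[
\ell_\gamma(k\mt h)=\tone{(\ell_\gamma(h))}\mtco\tone{(\ell_\gamma(k))}\ot\ttwo{(\ell_\gamma(k))}\mtco\ttwo{(\ell_\gamma(h))},\qquad \forall\,h,k\in H.
\]
I would compute the two sides independently, bring each to the common shape (scalar built from $\gamma,\bar\gamma,u_\gamma$)\,$\times\,(\lone{\three h}\lone{\three k}\ot\ltwo{\three k}\ltwo{\three h})$, and then match the scalar prefactors.

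For the left-hand side, I would use that the coalgebra underlying $H_\gamma$ is undeformed, so $\Delta$ is multiplicative for $\mt$ and $\Delta(k\mt h)=(\one k\mt\one h)\ot(\two k\mt\two h)$. Inserting the definition $\ell_\gamma(g)=u_\gamma(\one g)\ell(\two g)$, expanding $k\mt h$ by \eqref{hopf-twist}, and applying the original property \eqref{prop-ell} to $\ell(\three k\,\three h)$ gives
\[
\ell_\gamma(k\mt h)=u_\gamma(\one k\mt\one h)\,\co{\two k}{\two h}\,\coin{\four k}{\four h}\;\lone{\three h}\lone{\three k}\ot\ltwo{\three k}\ltwo{\three h}.
\]

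For the right-hand side, I would substitute $\ell_\gamma(h)=u_\gamma(\one h)\,\lone{\two h}\ot\ltwo{\two h}$ (and the analogue for $k$) and expand the two deformed products by \eqref{rmod-twist}. The $H$-coactions thereby produced on $\lone{\two h},\ltwo{\two h}$ and on the $k$-counterparts are governed by the combined comodule--coalgebra compatibility obtained by applying \eqref{prop-ell0} and \eqref{prop-ell-S} in turn,
\[
\zero{(\lone g)}\ot\zero{(\ltwo g)}\ot\one{(\lone g)}\ot\one{(\ltwo g)}=\lone{\two g}\ot\ltwo{\two g}\ot S(\one g)\ot\three g,
\]
which converts the right-hand side into
\[
u_\gamma(\one h)\,u_\gamma(\one k)\,\coin{S(\two h)}{S(\two k)}\,\coin{\four k}{\four h}\;\lone{\three h}\lone{\three k}\ot\ltwo{\three k}\ltwo{\three h},
\]
displaying exactly the same $A\ot A$-valued tensor factor and the same $\coin{\four k}{\four h}$ as the left-hand side.

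Since this surviving tensor factor together with $\coin{\four k}{\four h}$ depends only on the components $\three{},\four{}$ of the coproducts, whereas the remaining scalar prefactors depend only on $\one{},\two{}$, matching the two expressions is equivalent to the purely scalar identity
\[
u_\gamma(\one k\mt\one h)\,\co{\two k}{\two h}=u_\gamma(\one h)\,u_\gamma(\one k)\,\coin{S(\two h)}{S(\two k)}.
\]
Establishing this last relation is the main obstacle: it is a bialgebra identity in $H_\gamma$ linking $u_\gamma$, the $2$-cocycle $\gamma$ and its convolution inverse $\bar\gamma$. I would prove it by expanding $u_\gamma$ through its definition \eqref{uxS}, using again the multiplicativity of $\Delta$ for $\mt$, and then repeatedly applying the $2$-cocycle condition for $\gamma$ together with the fact that $u_\gamma$ and $\bar u_\gamma$ are mutually convolution inverse; the trivial-cocycle case $\gamma=\varepsilon\ot\varepsilon$, $u_\gamma=\varepsilon$ (where both sides collapse to $\varepsilon(h)\varepsilon(k)$) serves as a consistency check. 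Alternatively, the whole statement can be read off from the monoidal (Drinfeld-twist) interpretation of $2$-cocycle deformation, under which algebra homomorphisms are carried to algebra homomorphisms; but the direct cocycle bookkeeping is the self-contained route, and the scalar identity above is where the real work lies.
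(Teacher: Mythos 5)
Your proposal is correct and follows essentially the same route as the paper: both sides are expanded to a common tensor factor $\lone{\three h}\lone{\three k}\ot\ltwo{\three k}\ltwo{\three h}$ times cocycle scalars, using \eqref{prop-ell} on the left and the comodule identities \eqref{prop-ell0}, \eqref{prop-ell-S} (which you correctly combine into a single formula) on the right. The scalar identity you isolate as the remaining obstacle, $u_\cot(\one k\mt\one h)\,\co{\two k}{\two h}=u_\cot(\one h)u_\cot(\one k)\coin{S(\two h)}{S(\two k)}$, is exactly the paper's equation \eqref{u-prod}, which it takes from \cite[Lemma 3.2]{ppca} and derives by the same cocycle manipulations you outline, so there is no real gap.
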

\begin{proof}
We use the notation  $\ell_\cot(h)=  {h}^{\scriptscriptstyle{<1>_\cot}} \ot  {h}^{\scriptscriptstyle{<2>_\cot}}$ for the image of an element $h \in \hg$. We have to show that $\ell_\cot$ satisfies \eqref{prop-ell}:
$$
\ell_\cot(h \mt k) = {k}^{\scriptscriptstyle{<1>_\cot}} \mtco {h}^{\scriptscriptstyle{<1>_\cot}}\ot  {h}^{\scriptscriptstyle{<2>_\cot}}\mtco  {k}^{\scriptscriptstyle{<2>_\cot}}.
$$
To compute the left hand side we need the property
\begin{align}\label{u-prod} 
u_\cot(hk) &= \co{\one{h} \one{k}}{S(\two{k}) S(\two{h}) } 
\nn  \\
&= \coin{\one{h}}{\one{k}} \, u_\cot(\two{k}) \, \coin{S(\three{k})}{S(\four{h})} \,
\co{\two{h}}{S(\three{h})} 
\nn  \\
& =  \coin{\one{h}}{\one{k}} \, u_\cot(\two{k}) \, u_\cot(\two{h}) \, \coin{S(\three{k})}{S(\three{h})} 
\end{align}
of the linear map $u_\cot$, see \cite[Lemma 3.2]{ppca}.  Then
\begin{align*}
\ell_\cot(h \mt k) &=   \co{\one{h}}{\one{k}} \, \ell_\cot(\two{h}\two{k})\, \coin{\three{h}}{\three{k}}
\\
&= \co{\one{h}}{\one{k}} \, u_\cot(\two{h}\two{k})\, \ell(\three{h}\three{k}) \coin{\four{h}}{\four{k}}
\\
&= \co{\one{h}}{\one{k}} \, u_\cot(\two{h}\two{k})\,  \lone{\three{k}} \lone{\three{h}}\ot \ltwo{\three{h}} \ltwo{\three{k}}  \coin{\four{h}}{\four{k}}
\\
&= u_\cot (\one{k}) u_\cot (\one{h})  \, \coin{S(\two{k})}{S(\two{h}) } 
\lone{\three{k}} \lone{\three{h}}\ot \ltwo{\three{h}} \ltwo{\three{k}}  \coin{\four{h}}{\four{k}}
\end{align*}
where we used property \eqref{prop-ell} of $\ell$ for the last but one equality and \eqref{u-prod} for the last one. 
For the right hand side we compute
\begin{align*}
&{k}^{\scriptscriptstyle{<1>_\cot}} \mtco {h}^{\scriptscriptstyle{<1>_\cot}}\ot  {h}^{\scriptscriptstyle{<2>_\cot}}\mtco  {k}^{\scriptscriptstyle{<2>_\cot}} 
\\
&= u_\cot (\one{k}) u_\cot (\one{h})  \, \lone{\two{k}} \mtco \lone{\two{h}}\ot \ltwo{\two{h}} \mtco \ltwo{\two{k}} 
\\
&= u_\cot (\one{k}) u_\cot (\one{h})  \, \zero{\lone{\two{k}} } \zero{\lone{\two{h}}} \,\coin{\one{\lone{\two{k}} }}{\one{\lone{\two{h}}}}
\ot \ltwo{\two{h}} \mtco \ltwo{\two{k}} 
\\
&= u_\cot (\one{k}) u_\cot (\one{h})  \,  \lone{\three{k}} \lone{\three{h}}   \,\coin{S(\two{k})}{S(\two{h}) }\ot \ltwo{\three{h}} \mtco \ltwo{\three{k}} 
\\
& = u_\cot (\one{k}) u_\cot (\one{h})  \,  \lone{\three{k}} \lone{\three{h}}   \,\coin{S(\two{k})}{S(\two{h}) }\ot 
\zero{\ltwo{\three{h}}} \zero{\ltwo{\three{k}} } \,\coin{\one{\ltwo{\three{h}}}}{\one{\ltwo{\three{k}} }} 
\\
& = u_\cot (\one{k}) u_\cot (\one{h})  \,  \lone{\three{k}} \lone{\three{h}}   \,\coin{S(\two{k})}{S(\two{h}) }\ot 
 \ltwo{\three{h}}  \ltwo{\three{k}}  \,\coin{ \four{h}}{\four{k}} 
\end{align*}
where we used property \eqref{prop-ell-S} for the third equality and \eqref{prop-ell0} for the last one.
Then, $\ell_\cot$ satisfies \eqref{prop-ell} as claimed.
\end{proof}

\newpage

\noindent
\textbf{Acknowledgements.}
GL acknowledges support from PNRR MUR projects PE0000023-NQSTI.
CP gratefully acknowledges support by the University of Naples Federico II under the grant FRA 2022 \emph{GALAQ: Geometric and ALgebraic Aspects of Quantization}.
Both authors are members of INdAM-GNSAGA.
We are grateful to Tomasz Brzezi\'nski, Francesco D'Andrea, Ulrich Kramer and Thomas Weber for useful discussions and suggestions.  

\medskip

\end{document}